\newtheorem{prop}{Proposition}[section]
\newtheorem{theorem}[prop]{Theorem}
\newtheorem{lemma}[prop]{Lemma}
\newtheorem{corollary}[prop]{Corollary}
\newtheorem{fact}[prop]{Fact}
\theoremstyle{definition}
\newtheorem{definition}[prop]{Definition}
\theoremstyle{remark}
\newtheorem{remark}[prop]{Remark}
\newtheorem{example}[prop]{Example}
\newtheorem*{remark*}{Remark}
\newtheorem*{remarks*}{Remarks}
\numberwithin{equation}{section}
\newcommand{\R}{\mathbb{R}}
\newcommand{\N}{\mathbb{N}}
\newcommand{\Z}{\mathbb{Z}}
\newcommand{\F}{\mathbb{F}}
\newcommand{\eps}{\varepsilon}
\newcommand{\Aut}{\operatorname{Aut}}
\newcommand{\diam}{\operatorname{diam}}
\begin{document}

\title{Nilprogressions and groups with moderate growth}
\date{}
%\author{Emmanuel Breuillard\thanks{\textbf{Insert information here}}\,\,\,\,and Matthew C. H. Tointon\thanks{The second author is on leave from a Junior Research Fellowship at Homerton College, Cambridge, where some of this work was carried out}}

\author{Emmanuel Breuillard and Matthew C. H. Tointon}

\keywords{polynomial growth, volume doubling, diameter bounds, finite groups, approximate groups, moderate growth, mixing time}

\address{Laboratoire de Math\'ematiques d'Orsay, Univ.~Paris-Sud, CNRS, Universit\'e Paris-Saclay, 91405 Orsay, France}
\email{emmanuel.breuillard@math.u-psud.fr}

\address{Laboratoire de Math\'ematiques d'Orsay, Univ.~Paris-Sud, CNRS, Universit\'e Paris-Saclay, 91405 Orsay, France}
\email{matthew.tointon@math.u-psud.fr}

\begin{abstract}
We show that doubling at some large scale in a Cayley graph implies uniform doubling at all subsequent scales. The proof is based on the structure theorem for approximate subgroups proved by Green, Tao and the first author. We also give a number of applications to the geometry and spectrum of finite Cayley graphs. For example, we show that a finite group has moderate growth in the sense of Diaconis and Saloff-Coste if and only if its diameter is larger than a fixed power of the cardinality of the group. We call such groups almost flat and show that they have a subgroup of bounded index admitting a cyclic quotient of comparable diameter. We also give bounds on the Cheeger constant, first eigenvalue of the Laplacian, and mixing time. This can be seen as a finite-group version of Gromov's theorem on groups with polynomial growth. It also improves on a result of Lackenby regarding property $(\tau)$ in towers of coverings. Another consequence is a universal upper bound on the diameter of all finite simple groups, independent of the CFSG.
%We show that doubling at some large scale in a Cayley graph implies uniform doubling at all subsequent scales. The proof is based on the structure theorem for approximate subgroups proved by Green, Tao and the first author. We also give a number of applications to the geometry and spectrum of finite Cayley graphs. For example, we show that a finite group has moderate growth in the sense of Diaconis and Saloff-Coste if and only if its diameter is larger than a fixed power of the cardinality of the group. We call such groups almost flat and show that they have subgroup of bounded index admitting a cyclic quotient of comparable diameter. We also give bounds on the Cheeger constant, $\lambda_1$ and mixing time. This can be seen as a finite-group version of Gromov's theorem on groups with polynomial growth. It also improves on a result of Lackenby regarding property $(\tau)$ in towers of coverings. Another consequence is a universal upper bound on the diameter of all finite simple groups, independent of the CFSG.
\end{abstract}

\maketitle
\setcounter{tocdepth}{1}
\tableofcontents

\section{Introduction}

Let $G$ be a group generated by a finite, symmetric subset $S$. Here, and throughout this paper, by writing that $S$ is \emph{symmetric} we mean that if $s$ belongs to $S$ then so does $s^{-1}$, \emph{and} that $S$ contains the identity. When speaking of the growth of $G$ with respect to $S$, we refer to the behaviour of the sequence of cardinalities
\[
|S|,|S^2|,|S^3|,\ldots,
\]
where we denote by $S^n$ the $n$-fold product set $\{s_1\cdot ... \cdot s_n ; s_i \in S\}$. This is the ball of radius $n$ in the Cayley graph of $G$ relative to $S$. In the event that $G$ is finite, we also consider the \emph{diameter} $\diam_S(G)$ of $G$ with respect to $S$, which is defined to be the minimum $n$ such that $S^n=G$.

According to Gromov's polynomial growth theorem \cite{gromov}, if the sequence $\{|S^n|\}$ is bounded above by a polynomial function of $n$, then $G$ has a finite-index nilpotent subgroup. In this paper we are concerned with some refinements of this theorem, mostly in the context of finite groups. In particular, we study the relations between the diameter and the cardinality of a finite group on the one hand, and various interesting invariants such as the Cheeger constant, the first non-zero eigenvalue of the Laplace operator, and the mixing time of the associated Cayley graph on the other.

Our two main results are  Theorems \ref{thm:scales} and \ref{thm:gromovfinite} below. The first concerns the doubling property at one given scale in an arbitrary Cayley graph.

\begin{theorem}[Doubling at some scale implies doubling at all larger scales]\label{thm:scales}
For every $K\ge1$ there exist $n_0=n_0(K)\in\N$ and $\theta(K) \geq 1$, such that if $S$ is a finite symmetric set inside some group, and if there exists $n\ge n_0$ for which
\begin{equation}\label{doubling}
|S^{2n+1}|\le K|S^n|,
\end{equation}
then for every $m\ge n$ and every $c\in \N$ we have $|S^{cm}|\leq \theta(K)^c  |S^m|$.
\end{theorem}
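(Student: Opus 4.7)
\medskip
\noindent
The hypothesis is a small-doubling condition on $A := S^n$: since $A^2 = S^{2n} \subseteq S^{2n+1}$, one has $|A^2| \le K|A|$. By the non-commutative Pl\"unnecke--Ruzsa inequalities this already yields $|A^k|\le K^{O(k)}|A|$ for every $k$, and after replacing $A$ by a bounded power one may assume $A$ is a genuine $O_K(1)$-approximate subgroup of comparable size to $S^n$. The problem thus reduces to controlling the volume growth of such an approximate group at every scale $m\ge n$.

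\medskip
\noindent
The natural approach is to feed $A$ into the Breuillard--Green--Tao structure theorem, which produces a coset nilprogression $P\subseteq S^{O(n)}$ of rank and step bounded by $O_K(1)$, with $|P|\asymp_K|S^n|$, such that $S^n\subseteq X\cdot P$ for a finite set $X$ with $|X|=O_K(1)$ that moreover essentially normalises $P$. The essential virtue of such a $P$ is its \emph{uniform polynomial growth}: for coset nilprogressions of bounded rank and step one has $|P^k|\le C_K k^{d_K}|P|$ and $|P^{ck}|\le C_K c^{d_K}|P^k|$ for all integers $c,k\ge 1$, with the exponent $d_K$ depending only on the rank and step. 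This can be read off the Bass--Guivarc'h growth formula for nilpotent groups expressed in Mal'cev coordinates on $P$.

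\medskip
\noindent
To conclude, one propagates the approximate-group structure at scale $n$ to every larger scale. Using $S^n\subseteq XP$ and iterating while exploiting the approximate normality of $P$ inside the ambient approximate subgroup, one obtains the sandwich $P^{\lfloor m/n\rfloor}\subseteq S^{O(m)}$ and $S^m\subseteq X^{O(1)}\cdot P^{\lceil m/n\rceil}$ for every $m\ge n$, so that $|S^m|$ and $|P^{\lceil m/n\rceil}|$ agree up to $O_K(1)$ factors, and likewise at scale $cm$. Combining this with the nilprogression growth estimate yields $|S^{cm}|/|S^m|\le O_K(c^{d_K})$, which is in turn dominated by $\theta(K)^c$ once $\theta(K)$ is chosen large enough in terms of $K$; the threshold $n_0(K)$ is then chosen so that the BGT theorem applies meaningfully to the approximate group produced in the first step. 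The main technical obstacle is this final propagation: one must prevent a runaway factor of the form $|X|^{m/n}$ from appearing upon iteration of the covering $S^n\subseteq XP$, which is exactly what the normalising aspect of the Breuillard--Green--Tao conclusion is designed to afford, and is why one needs the structural form of the theorem rather than a purely combinatorial covering statement.
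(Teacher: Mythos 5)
Your outline follows the paper's broad strategy (reduce to a structure theorem, extract a nilprogression, exploit polynomial growth), but the very first step contains a genuine error. You write that $|A^2|\le K|A|$ with $A=S^n$ and then appeal to ``non-commutative Pl\"unnecke--Ruzsa'' to get $|A^k|\le K^{O(k)}|A|$. This is false: in the non-abelian setting, small doubling of a symmetric set does \emph{not} imply small tripling. For instance, in the free product $\langle a\rangle * \langle t\mid t^2=1\rangle$, the symmetric set $A=\{1,t\}\cup\{a^i:|i|\le n\}$ satisfies $|A^2|=O(|A|)$ while $|A^3|\gg |A|^2$ because of the roughly $n^2$ distinct elements $a^it a^j$. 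Tao's non-commutative Pl\"unnecke--Ruzsa requires a small \emph{tripling} hypothesis. By discarding the ``$+1$'' when passing from $|S^{2n+1}|\le K|S^n|$ to $|A^2|\le K|A|$, you throw away exactly the slack the paper needs: Lemma \ref{doublingreduce} uses Petridis's lemma with $B=S^{n+1}$, $A=S^n$ (so $|BA|\le K|A|$, which is strictly stronger than $|A^2|\le K|A|$) to obtain $|S^{5n}|\le C(K)|S^n|$, and only then does Lemma \ref{lem:sm.doub.app.gp} produce a bona fide approximate group. Without this step your first paragraph does not hold.

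Two further steps are treated too casually. First, the ``runaway $|X|^{m/n}$'' problem is real, but the mechanism you gesture at (an ``approximate normality'' of $P$) is not what BGT delivers. In the paper this is handled by Lemma \ref{lem:exhausts.cosets} and Proposition \ref{prop:v.nilp}: one passes to a finite-index subgroup $\Gamma\le G$ with a normal $H\le S^{8n}$ and an approximate group $B=S^{8n}\cap\Gamma$, and obtains $S^{bn}\subset XB^{b-1}$ with a \emph{single} coset-representative set $X$ (of size at most $n_0$) multiplied on once, not iterated — because $B^{b-1}\subset\Gamma$ and $X$ indexes cosets of $\Gamma$. Second, the claim that ``uniform polynomial growth of nilprogressions can be read off the Bass--Guivarc'h formula'' understates the difficulty: nilprogressions have arbitrary side lengths $L$, need not be proper, need not even be approximate groups for small $L$ (as noted after Remark \ref{rem:proper}), and $P^m$ is not a dilation $P(\cdot;mL)$. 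Establishing the uniform statement ``$S^m$ is an $O_{K,s}(1)$-approximate group for all $m$'' in a nilpotent group is precisely Theorem \ref{prop:scales.nilp}, whose proof (via nilcomplete progressions and Proposition \ref{powers}) occupies all of Section \ref{sec:coms.nilprogs}.
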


We stress that the constants $n_0(K), \theta(K)$ depend only on $K$, and not on the group or on $S$. Most of our results in this paper are uniform in the generating set $S$ and do not assume that $S$ is bounded.  Unfortunately the method does not allow for an effective computation of the constants $n_0(K)$ and $\theta(K)$ in terms of $K$.

The doubling property played an important role in Gromov's original proof of his polynomial-growth theorem. Indeed, the first step of his proof consisted of observing that the polynomial growth condition on $|S^n|$ implies that a doubling condition such as $(\ref{doubling})$ holds for infinitely many $n$ and some uniform $K$. Green, Tao and the first author established in \cite{bgt} a structure theorem for doubling sets (or equivalently approximate subgroups; see Section \ref{app-sec}) in arbitrary groups, allowing them to extend Gromov's theorem in a number of ways. The proof of Theorem \ref{thm:scales} makes crucial use of this structure theorem via an analysis of the behaviour of large powers of nilprogressions (see Section \ref{sec:coms.nilprogs}, below, for the definition of nilprogressions).

\begin{remark*}Tao \cite{tao.gromov} has independently used a somewhat similar argument to classify fairly explicitly the possible behaviours of $|S^m|$ as $m\to\infty$ under the hypothesis \eqref{doubling}, in particular recovering Theorem \ref{thm:scales}.
\end{remark*}

\bigskip

The motivation for Theorem \ref{thm:scales} came from work of Benjamini, Finucane and Tessera \cite{bft}, in which the authors identify the scaling limit of a sequence of vertex-transitive graphs of large diameter (i.e. such that $(\ref{diambnd})$ below holds).
Using Theorem \ref{thm:scales} they show\footnote{Their original argument required only the main result of \cite{bgt}, not Theorem \ref{thm:scales}, but needed to assume a bound on $|S|$.}  that for a given $\eps>0$, the family of all such Cayley graphs (rescaled so that edges have length $1/\diam_S(G)$) is relatively compact for the Gromov-Hausdorff topology on the space of compact metric spaces. Indeed a doubling condition such as that of Theorem \ref{thm:scales} is exactly what is needed to apply Gromov's relative compactness criterion \cite{gromov-book}. Furthermore, they identify the possible limits as the flat tori $\R^d/\Z^d$, $d=d(\eps)$, endowed with a translation invariant Finsler metric.

This prompts the following definition.

\begin{definition}Let $\eps>0$. A Cayley graph of a finite group $G$ with symmetric generating set $S$ is called \emph{$\eps$-almost flat} if
\begin{equation}\label{diambnd}
\diam_S(G)\ge\left(\frac{|G|}{|S|}\right)^\varepsilon.
\end{equation}
\end{definition}

See \cite{bft} and the survey \cite{minessota-survey} for more details on the result of Benjamini, Finucane and Tessera. We will give further applications of Theorem \ref{thm:scales} below.

\bigskip

\noindent \emph{Remark.} Abelian groups and more generally nilpotent groups with a bound on the nilpotency class and number of generators are $\eps$-almost flat for some positive $\eps$ depending only on these bounds. See Proposition \ref{prop:ab.converse}, below. We also show in Proposition \ref{cor:intrinsic} that the notion of an almost flat group is independent of the choice of generating set, provided the size of the generating set remains bounded.

\bigskip

Our second main result concerns the algebraic structure of almost flat finite groups, and runs as follows.

\begin{theorem}[Almost flat groups have large virtually abelian quotients]\label{thm:gromovfinite} Let $\eps>0$ and $\beta>\frac{1}{2}$. Every $\eps$-almost flat Cayley graph of a finite group $G$ with generating set $S$ has a normal subgroup $H$ contained in $S^{O_{\eps,\beta}(\gamma^{\beta})}$ such that $G/H$ has an abelian subgroup whose rank and index are bounded in terms of $\eps$ and $\beta$ only.
\end{theorem}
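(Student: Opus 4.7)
\emph{A scale of slow doubling.} The strategy is to locate a scale at which $S$ has small doubling, apply the Breuillard--Green--Tao structure theorem at that scale, and then pass from the resulting virtually nilpotent quotient to a virtually abelian one. Write $\gamma:=\diam_S(G)$; the $\eps$-almost flat hypothesis reads $|G|/|S|\le\gamma^{1/\eps}$. Fix an auxiliary parameter $\beta'\in(1/2,\beta)$ (the slack $\beta-\beta'$ will absorb implicit constants at the end). Pigeonholing in the telescoping identity
\[
\prod_{k=0}^{K-1}\frac{|S^{2^{k+1}}|}{|S^{2^k}|}\;=\;\frac{|S^{2^K}|}{|S|}\;\le\;\frac{|G|}{|S|}\;\le\;\gamma^{1/\eps},
\]
with $K=\lfloor\beta'\log_2\gamma\rfloor$, produces some $n=2^k\le\gamma^{\beta'}$ at which $|S^{2n}|\le K_0\,|S^n|$ with $K_0=2^{O(1/(\eps\beta'))}$. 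Provided $\gamma$ is sufficiently large we have $n\ge n_0(K_0)$, and Theorem~\ref{thm:scales} applies to give $|S^{cm}|\le\theta(K_0)^c|S^m|$ for every $m\ge n$ and every $c\in\N$. By Pl\"unnecke--Ruzsa, $S^n$ is then a $K_1$-approximate subgroup of $G$ with $K_1=K_1(\eps,\beta)$.

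\emph{Applying the structure theorem.} The main theorem of \cite{bgt} applied with parameter $K_1$ yields a finite subgroup $H_0\le G$, a coset nilprogression $H_0 P$ of rank $r$ and step $s$ with $r,s=O_{\eps,\beta}(1)$, and a set $X\subseteq G$ with $|X|=O_{\eps,\beta}(1)$, such that $H_0 P\subseteq S^{Cn}$, $S^n\subseteq X\cdot H_0 P$, and $H_0$ is normalized by $S^n$. Since $\langle S^n\rangle=G$, we in fact have $H_0\trianglelefteq G$, and $H_0\subseteq S^{Cn}\subseteq S^{O(\gamma^{\beta'})}$. Let $\tilde N\le G/H_0$ denote the subgroup generated by the image $\bar P$ of $P$; the BGT theorem ensures that $\tilde N$ has bounded index in $G/H_0$, and passing if necessary to its normal core loses only a further bounded factor, so we may assume $\tilde N\trianglelefteq G/H_0$. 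Then $\tilde N$ is a finite nilpotent group of rank $\le r$ and step $\le s$.

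\emph{From nilpotent to abelian, and the main obstacle.} The derived subgroup $[\tilde N,\tilde N]$ is characteristic in $\tilde N$ and hence normal in $G/H_0$; let $H$ be its full preimage in $G$. Then $H\trianglelefteq G$, and $\tilde N/[\tilde N,\tilde N]\le G/H$ is abelian of rank $\le r$ and index $\le[G/H_0:\tilde N]=O_{\eps,\beta}(1)$. Both algebraic conclusions hold, so it remains only to bound the $S$-length of elements of $H$. For this we use the explicit structure of nilprogressions from Section~\ref{sec:coms.nilprogs}: the derived subgroup $[\tilde N,\tilde N]$ is itself parametrized by a sub-nilprogression in the basic commutators $[\bar u_i,\bar u_j]$ of the generators of $\bar P$. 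Each basic commutator has $S$-length $O(n)$, and the total $S$-radius of the sub-nilprogression is $O_{r,s}(n)$, so $H\subseteq S^{O_{\eps,\beta}(n)}\subseteq S^{O_{\eps,\beta}(\gamma^{\beta'})}\subseteq S^{O_{\eps,\beta}(\gamma^\beta)}$, the final inclusion using $\beta'<\beta$ and $\gamma$ large. The main obstacle is precisely this last $S$-radius control: without the explicit nilprogression description of $[\tilde N,\tilde N]$, a product of many commutators could easily inflate the word-length beyond $\gamma^\beta$, and the hypothesis $\beta>1/2$ provides the buffer required for the unavoidable losses in Pl\"unnecke--Ruzsa and in the implicit constants of \cite{bgt}.
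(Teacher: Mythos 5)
There is a genuine gap at the final and most important step, namely the claim that $H\subseteq S^{O_{\eps,\beta}(n)}$, which you justify by asserting that $[\tilde N,\tilde N]$ is ``parametrized by a sub-nilprogression in the basic commutators $[\bar u_i,\bar u_j]$ of the generators of $\bar P$'' of $S$-radius $O_{r,s}(n)$. This is false in general, and it is precisely the difficulty that the paper's Lemma \ref{small-commutator} is designed to overcome. The derived subgroup $[\tilde N,\tilde N]$ is the \emph{subgroup generated by} such commutators, not the set of bounded products of them, and it can contain high powers of the basic commutators that lie far outside $S^{O(n)}$. Concretely, in the Heisenberg group $H_3(\Z/N\Z)$ with $P=P^*(x,y;L,L)$ at scale $L\sim n$, the sub-nilprogression in $z=[x,y]$ only reaches $\{z^c:|c|\lesssim n^2\}\subseteq S^{O(n)}$, whereas $[\tilde N,\tilde N]=\langle z\rangle$ has order $N$ and covering it requires radius $\sim\sqrt{N}\sim\gamma^{1/2}$ in $S$; once $n\ll\gamma^{1/2}$ the claimed containment fails outright, and even at the scales you consider the containment is not a consequence of the nilprogression structure alone.

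The correct bound is $[\Gamma,\Gamma]\subseteq P^{O_{r,s}(\gamma_P^{1/2})}$, where $\gamma_P$ is the diameter with respect to the nilprogression $P$, and the proof requires the \emph{multilinearity} of commutator forms (Lemma \ref{multilin}): the identity $\alpha(x_{j_1},\ldots,x_{j_s})^m=\alpha(x_{j_1}^m,x_{j_2},\ldots,x_{j_s})$ converts a large power of a commutator into a commutator with one long argument $x_{j_1}^m\in P^{\gamma_P}$, which one then collects (writing $x_{j_1}^m$ as an ordered word of side lengths $\lesssim L_i\gamma_P$ mod $[G,G]$) and absorbs into $\overline P(x;\lceil\gamma_P^{1/s}\rceil L)\subseteq P^{O(\gamma_P^{1/s})}$ by Proposition \ref{powers}\eqref{prop:dilates.1}. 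The exponent $1/2$ then comes from $1/s\le1/2$ for $s\ge2$. Translating back to $S$ gives $[\Gamma,\Gamma]\subseteq S^{O(n\gamma^{1/2})}$, not $S^{O(n)}$. Relatedly, your choice $\beta'\in(1/2,\beta)$ is the wrong regime: since the commutator cost is an \emph{additional} factor of $\gamma^{1/2}$ on top of $n\lesssim\gamma^{\beta'}$, the auxiliary parameter must be taken \emph{small}, $\beta'\in(0,\beta-\tfrac12)$ (the $\delta$ of Theorem \ref{struct}(2)), so that $n\gamma^{1/2}\lesssim\gamma^{1/2+\beta'}\le\gamma^\beta$. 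The remainder of your outline (pigeonholing to find a doubling scale, applying the structure theorem, passing to a bounded-index normal nilpotent subgroup and quotienting by its derived subgroup) does match the paper's strategy, but without Lemma \ref{small-commutator} the $S$-radius control on $H$ does not follow.
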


The implied constants depend only on $\eps$ and $\beta$, and not on the group $G$ or the size of the generating set $S$. We refer the reader to Theorem \ref{struct} below for two variants of Theorem \ref{thm:gromovfinite}. In particular, we show there that $G$ must have a subgroup of bounded index with a cyclic quotient of comparable diameter.

We note that Gromov had already proved a finite version of his polynomial growth theorem (see at the end of \cite{gromov}). The above can be seen as a refinement, in which a weaker assumption is made (a diameter condition instead of a volume growth condition) and a somewhat stronger conclusion is derived.

\bigskip

It is legitimate to ask whether the conclusion of Theorem \ref{thm:gromovfinite} (or those of Theorem \ref{struct} below) continues to hold under a weaker assumption on the diameter. The answer is no. In fact, given any function $f:\N \to \R_+$ going to zero at infinity, there is a sequence of pairwise non-isomorphic finite groups $G_n$ with uniformly bounded generating sets with respect to which their diameters are at least $|G_n|^{f(|G_n|)}$, while $G_n$ has no proper subgroup of index at most $n$ and no non-trivial abelian quotient. This example also provides a counter-example to a non-commutative version of the \emph{polynomial Freiman-Ruzsa} conjecture. See Subsection \ref{sharpness} below.

\bigskip

We now present some applications of Theorems \ref{thm:scales} and \ref{thm:gromovfinite}.

\subsection{Diameter of finite simple groups}

The following is an immediate consequence of Theorem \ref{thm:gromovfinite}.

\begin{corollary}[Diameter bound for finite simple groups]\label{cor:fsg}
Let $\varepsilon>0$. Then there is a constant $C_\eps$ depending only on $\eps$ such that every non-abelian finite simple group $G$ with symmetric generating set $S$ satisfies
\[
\diam_S(G)\le\max\left\{\left(\frac{|G|}{|S|}\right)^\varepsilon,C_\eps\right\}.
\]
\end{corollary}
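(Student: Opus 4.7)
The plan is to derive the corollary directly from Theorem \ref{thm:gromovfinite} by a dichotomy on the normal subgroup $H$ that it produces. Fix once and for all some $\beta\in(\tfrac12,1)$, say $\beta=\tfrac34$, and let $G$ be a non-abelian finite simple group with symmetric generating set $S$. If $\diam_S(G)\le(|G|/|S|)^\eps$ there is nothing to prove, so I may assume the opposite inequality, i.e.\ that the Cayley graph is $\eps$-almost flat. Writing $\gamma=\diam_S(G)$, Theorem \ref{thm:gromovfinite} yields a normal subgroup $H\trianglelefteq G$ with $H\subseteq S^{C\gamma^{\beta}}$ (for some $C=C(\eps,\beta)$) such that $G/H$ contains an abelian subgroup of index and rank bounded by some $M=M(\eps,\beta)$. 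Simplicity of $G$ forces $H=\{1\}$ or $H=G$, and it is enough to bound $\gamma$ by a constant depending only on $\eps$ in each of these two cases.

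In the first case $H=\{1\}$, the group $G$ itself has an abelian subgroup $A$ of index at most $M$. Since $G$ is non-abelian, $A$ is a proper subgroup, so its normal core $\bigcap_{g\in G}gAg^{-1}$ is a proper normal subgroup of $G$ contained in $A$, and has index at most $M!$ via the permutation action of $G$ on the $M$ left cosets of $A$. Simplicity of $G$ forces this normal core to be trivial, whence $|G|\le M!$ and a fortiori $\gamma\le M!$, a constant depending only on $\eps$ (and on the fixed $\beta$).

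In the second case $H=G$, the inclusion $G=H\subseteq S^{C\gamma^{\beta}}$ gives $\gamma\le C\gamma^{\beta}$, hence $\gamma^{1-\beta}\le C$, and since $1-\beta>0$ we get $\gamma\le C^{1/(1-\beta)}$, again a constant depending only on $\eps$. Setting $C_\eps=\max\{M!,\,C^{1/(1-\beta)}\}$ then yields the stated bound. There is no real obstacle here, which is precisely why the corollary is advertised as immediate; the only point requiring any care is the choice of $\beta$, which must be taken strictly between $\tfrac12$ (to satisfy the hypothesis of Theorem \ref{thm:gromovfinite}) and $1$ (so that the inequality $\gamma\le C\gamma^{\beta}$ is genuinely restrictive in the second case).
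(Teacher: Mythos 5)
Your proof is correct and takes essentially the same route as the paper: invoke Theorem \ref{thm:gromovfinite} with a fixed $\beta\in(\tfrac12,1)$, and then use simplicity to split into the cases $H=G$ (giving $\gamma\le C\gamma^\beta$ and hence $\gamma=O_\eps(1)$) and $H=\{1\}$ (giving an abelian subgroup of bounded index, whose normal core must be trivial, forcing $|G|=O_\eps(1)$). The only difference is that you spell out the normal-core step that the paper leaves implicit when it asserts the existence of a \emph{normal} abelian subgroup of bounded index.
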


\begin{proof} Write $\gamma=\diam_S(G)$. Since $G$ is simple, if $\gamma>(|G|/|S|)^\eps$ then Theorem \ref{thm:gromovfinite} implies either that $\gamma\le O_{\eps}(\gamma^{3/4})$, in which case $\gamma\le O_{\eps}(1)$, or that $G$ has a normal abelian subgroup of index at most $O_{\eps}(1)$. This normal subgroup is either all of $G$, in which case $G$ is abelian, or it is trivial, in which case $|G|\le O_{\eps}(1)$, and so certainly $\gamma\le O_{\eps}(1)$.
\end{proof}

This bound should be compared with the conjecture of Babai (see \cite{babai}) according to which there ought to exist universal constants $C,D>0$ such that $\diam_S(G)\leq D (\log |G|)^C$ for all finite simple groups $G$ and all $S$. Such a strong bound is currently only known for finite simple groups of bounded rank (see \cite{bgt1,helfgott, pyber-szabo}). In fact for these groups it is even conjectured that one can take $C=1$ with a multiplicative constant $D$ that is allowed to depend on the rank \cite[Conjecture 4.5]{breuillard-icm}. For alternating groups $A_n$, weaker bounds that are still much better than that of Corollary \ref{cor:fsg}, are known (see \cite{babai, helfgott-seress}).

Corollary \ref{cor:fsg} is the first non-trivial bound towards Babai's conjecture valid for all finite simple groups as far as we can tell. However, even if it is not straightforward, it is likely that a better bound also valid for all finite simple groups is within reach by current methods using the classification of finite simple groups and the above mentioned results. Let us note however that a remarkable feature of the proof of Corollary \ref{cor:fsg} is that it does not use the classification of finite simple groups.

\bigskip

\subsection{Cheeger constant and spectral gap}

It is well known that the diameter of a Cayley graph is related to its Cheeger constant $h$ and to the first eigenvalue $\lambda_1$ of the Laplace operator in such a way that
\begin{eqnarray}\label{groupbounds}
\frac{1}{8 (\diam_S(G))^2}\leq \frac{1}{2}{h^2} \leq \lambda_1 \leq 2|S|h \leq \frac{ 8|S|^2 \log |G|}{\diam_S(G)}
\end{eqnarray}
See Section \ref{expanderbounds} for a definition of these quantities and a proof of \eqref{groupbounds}.

\begin{corollary}[Groups with very small Cheeger constant or  spectral gap]\label{smallcheeger} Let $G$ be a finite group with finite symmetric generating set $S$. Let $\eps>0$. Assume that the Cayley graph of $G$ with respect to $S$ satisfies one of the inequalities
$$\lambda_1 \leq \min\left\{2^{-9},\left(\frac{|S|}{|G|}\right)^\eps\right\} ,\qquad\qquad  h \leq \min\left\{2^{-2},\left(\frac{|S|}{|G|}\right)^\varepsilon\right\}. $$ Then $G$ is $\eps/3$-almost flat. In particular, the conclusion of Theorem \ref{thm:gromovfinite} holds.
\end{corollary}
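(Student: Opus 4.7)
The plan is to reduce to Theorem \ref{thm:gromovfinite}: I would show that either hypothesis already forces the Cayley graph to be $\eps/3$-almost flat, and then apply Theorem \ref{thm:gromovfinite} with $\eps/3$ in place of $\eps$ to conclude. All the real work is therefore in extracting a large lower bound on $\gamma := \diam_S(G)$ from a small value of $h$ or $\lambda_1$, and this is a short chase through the chain \eqref{groupbounds}. Throughout, write $N = |G|$ and $s = |S|$.

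For the Cheeger hypothesis I would extract $h \ge 1/(2\gamma)$ from the left end of \eqref{groupbounds}; the two upper bounds $h \le 2^{-2}$ and $h \le (s/N)^\eps$ then translate into $\gamma \ge 2$ and $(N/s)^\eps \le 2\gamma$ respectively. Combining these gives
$$\gamma^3 \;\ge\; \gamma^2 \cdot \gamma \;\ge\; 2\gamma \;\ge\; (N/s)^\eps,$$
so $\gamma \ge (N/s)^{\eps/3}$ as required. For the spectral hypothesis, the same left end of \eqref{groupbounds} gives $\gamma^2 \ge 1/(8\lambda_1)$; the two bounds $\lambda_1 \le 2^{-9}$ and $\lambda_1 \le (s/N)^\eps$ become $\gamma \ge 8$ and $8\gamma^2 \ge (N/s)^\eps$, and multiplying these two inequalities yields $\gamma^3 \ge (N/s)^\eps$, hence again $\gamma \ge (N/s)^{\eps/3}$.

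There is essentially no obstacle here; the absolute thresholds $2^{-2}$ and $2^{-9}$ in the hypotheses are evidently calibrated to cancel the constants $1/2$ and $1/8$ appearing in \eqref{groupbounds}, so that a clean exponent $\eps/3$ (rather than something like $\eps/2$) survives. The only mild subtlety is that the upper bounds on $\lambda_1$ and $h$ must be \emph{simultaneously} small in absolute terms and small relative to $(N/s)$: one uses the absolute smallness to produce a factor of $\gamma$ (namely $\gamma \ge 2$ or $\gamma \ge 8$) that then boosts the purely relative bound $\gamma^2 \gtrsim (N/s)^\eps$ (or $\gamma \gtrsim (N/s)^\eps$) up to $\gamma^3 \ge (N/s)^\eps$. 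Once $\eps/3$-almost flatness is in hand, the structural statement is immediate from Theorem \ref{thm:gromovfinite}.
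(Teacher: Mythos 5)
Your proof is correct and essentially identical to the paper's: in both cases one combines the left-hand bound of \eqref{groupbounds} (respectively $\gamma^2\ge 1/(8\lambda_1)$ and $\gamma\ge 1/(2h)$) with the absolute threshold on $\lambda_1$ or $h$ to produce a spare factor of $\gamma$, which boosts the relative bound into $\gamma^3\ge(|G|/|S|)^\eps$. The only minor difference is in the Cheeger case, where the paper actually deduces the slightly stronger $\gamma^2\ge 1/h$ (hence $\eps/2$-almost flatness) while you settle for $\gamma^3\ge 1/h$; both suffice for the claimed $\eps/3$.
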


This is useful in a variety of contexts, for example in the work of Ellenberg, Hall and Kowalski \cite{ehk} in arithmetic geometry, where they used a weak lower bound on $\lambda_1$ for certain perfect groups due to Pyber and Szab\'o \cite{pyber-szabo}. The bound from Corollary \ref{smallcheeger} would work there equally well. Note that, conversely, it is easy to verify that if a finite group $G$ has subgroup of bounded index with a cyclic quotient of comparable diameter then its $\lambda_1$ is very small, indeed of order $O_{|S|}(1)/(\diam_S(G))^2$.

\begin{corollary}[Cheeger constant and spectral gap for almost flat groups]\label{cheeger-gap} Given $\eps>0$ there are $C_1,C_2>0$ such that if $G$ is a finite group with $\eps$-almost flat Cayley graph, then its first eigenvalue $\lambda_1$ and Cheeger constant $h$ satisfy
\begin{equation}\label{eq:ch-gap.1}
\frac{1}{8 (\diam_S(G))^2}\leq \frac{1}{2}{h^2} \leq \lambda_1 \leq C_1 |S| h^2 \leq \frac{C_2 |S|^2}{ (\diam_S(G))^2}.
\end{equation}
\vspace{2pt}

\noindent In fact, there is $C_3>0$ such that
\begin{equation}\label{eq:ch-gap.2}
\lambda_1 \leq \frac{C_3 |S|}{ (\diam_S(G))^2}.
\end{equation}
\end{corollary}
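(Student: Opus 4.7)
The first three inequalities of \eqref{eq:ch-gap.1} are already contained in \eqref{groupbounds}, so the real content is \eqref{eq:ch-gap.2}. Granting \eqref{eq:ch-gap.2} and writing $\gamma=\diam_S(G)$, the trivial bound $h\ge 1/(2\gamma)$ of \eqref{groupbounds} gives $\lambda_1\le C_3|S|/\gamma^2\le 4C_3|S|h^2$, so that $C_1:=4C_3$ works; and Cheeger's inequality $h^2\le 2\lambda_1$ combined with \eqref{eq:ch-gap.2} yields $h\le\sqrt{2C_3|S|}/\gamma$, whence $C_1|S|h^2\le 2C_1C_3|S|^2/\gamma^2$. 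So I focus on proving \eqref{eq:ch-gap.2}.

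Apply Theorem \ref{thm:gromovfinite} with, say, $\beta=3/4$ to obtain a normal subgroup $H\triangleleft G$ with $H\subseteq S^{O_\eps(\gamma^{3/4})}$ such that $\bar G:=G/H$ has an abelian subgroup $A$ of index $D$ and rank $r$ bounded in terms of $\eps$ alone. Write $\pi:G\to\bar G$ and $\bar S=\pi(S)$; since $H$ sits in a ball of radius $O_\eps(\gamma^{3/4})$, one obtains $\diam_{\bar S}(\bar G)\asymp\gamma$ for $\gamma$ large enough in terms of $\eps$.

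Fix coset representatives $e=g_1,\dots,g_D$ for $A\backslash\bar G$, chosen so that each $g_i$ is a word in $\bar S$ of length at most $D-1$, and for each $(i,\bar s)\in\{1,\dots,D\}\times\bar S$ write the unique decomposition $g_i\bar s=a_{i,\bar s}\,g_{j(i,\bar s)}$ with $a_{i,\bar s}\in A$. Given a nontrivial character $\chi\in\widehat A$, define $\tilde\chi:\bar G\to\C$ by $\tilde\chi(ag_i):=\chi(a)$ and lift to $f:=\tilde\chi\circ\pi:G\to\C$. A direct calculation gives
\[
\frac{\langle\Delta f,f\rangle}{\|f\|^2}\;=\;\frac{1}{2D|S|}\sum_{s\in S}\sum_{i=1}^D\bigl|1-\chi(a_{i,\pi(s)})\bigr|^2,
\]
which is the Rayleigh quotient at $\chi$ of an explicit symmetric random walk on $A$ supported on the transversal set $T:=\{a_{i,\bar s}\}$. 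Tracing a path through the coset graph of $A\backslash\bar G$ shows that $T$ generates $A$ and $\diam_T(A)\le\diam_{\bar S}(\bar G)\asymp\gamma$, while $|T|\le D|\bar S|$. A Fourier/Dirichlet argument on the bounded-rank group $\widehat A\cong A$ then produces a nontrivial $\chi$ for which the right-hand side above is at most $C_\eps|S|/\gamma^2$, and hence $\lambda_1\le C_\eps|S|/\gamma^2$, as required.

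The crux of the argument is the final Fourier step: finding a single character that simultaneously controls the Rayleigh sum across all $D|\bar S|$ transversal generators, with weights coming from the fibers of $\pi\vert_S$. The bounded rank of $A$ is essential: it allows one to parametrise $\widehat A$ by integer vectors in $\Z^r$ modulo the invariant factors of $A$ and then apply either a direct pigeonhole or Minkowski's theorem on successive minima on an appropriate lattice in $\R^r$ to produce a character of the required Diophantine quality. Without the rank bound the argument would break down, as illustrated by $\Z/n$ with two arbitrary generators $\{1,g\}$; it is precisely the structural input coming from Theorem \ref{thm:gromovfinite} that makes the construction possible.
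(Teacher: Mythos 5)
Your overall plan is genuinely different from the paper's: they never invoke Theorem \ref{thm:gromovfinite} here at all. The paper's proof of \eqref{eq:ch-gap.2} is three lines: Lemma \ref{lem:5adic.ph} gives doubling at some scale $n\le\gamma^{1/2}$, Theorem \ref{thm:scales} then propagates this to get $|G|\le O_\eps(1)|S^{\gamma/3}|$, and the Buser-type estimate in Lemma \ref{lambdadoubling} (which tests against $f(x)=d(x,a)-d(x,b)$ for $a,b$ at distance $\gamma$) finishes. Your route via the structural Theorem \ref{thm:gromovfinite} and character test functions is an honest alternative, and the reduction of \eqref{eq:ch-gap.1} to \eqref{eq:ch-gap.2}, the lifting of $\tilde\chi\circ\pi$, and the Rayleigh-quotient computation are all fine in spirit.

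However, the proof has a genuine gap at exactly the step you identify as the crux. You assert that a ``direct pigeonhole or Minkowski'' argument, using only the bounded rank of $A$, produces a nontrivial character $\chi$ with $\sum_{i,s}|1-\chi(a_{i,\pi(s)})|^2 \lesssim_\eps D|S|^2/\gamma^2$, but the rank bound on $A$ is not enough for this. The transversal set $T=\{a_{i,\bar s}\}$ has cardinality up to $D|\bar S|$, which is \emph{not} bounded in terms of $\eps$ (the corollary must hold uniformly over $S$), and a pigeonhole on $\widehat A$ against $|T|$ independent Diophantine constraints would need roughly $|B(T,\delta)|\ge\delta^{|T|}|A|$, which fails already when $|T|\gg 1/\eps$. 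Indeed, rank alone does not even give volume doubling of $(A,T)$: take $A=\Z/N\Z$ (rank one) with $T=\{0,\pm1,\pm k,\ldots,\pm k^{m-1}\}$, for which $|T^{3n}|/|T^n|\approx 3^m$ is unbounded as $m\to\infty$. What actually bounds the ``effective dimension'' of $T$ is the inherited almost-flatness of $A$ (equivalently, doubling at scale $\approx\gamma$), and once you bring that in you are essentially back to Lemma \ref{lambdadoubling} and Theorem \ref{thm:scales} --- the paper's route, applied to $(A,T)$ rather than $(G,S)$. Your own rank-one example $\Z/n$ with generators $\{1,g\}$ does not illustrate the necessity of the rank hypothesis; rather it illustrates that even in rank one the character must be chosen carefully, and in fact whether a suitable $\chi$ exists with the required quality depends on $g$ through the diameter, not the rank. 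To repair the argument you would either need to invoke the doubling at scale $\gamma/3$ (at which point Lemma \ref{lambdadoubling} applied directly to $(G,S)$ is shorter), or use Theorem \ref{struct}(3) to pass to a genuine cyclic quotient of a bounded-index subgroup of $G$ --- but extending a test function across cosets of a non-normal subgroup is itself delicate and is not addressed.
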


In other words the Buser inequality holds in its strong form on almost flat groups, which is to say that $\lambda_1$ is comparable to $h^2$. This follows from a simple argument using the doubling property at a scale comparable to the diameter. See Lemma \ref{lambdadoubling}. It is known by work of Klartag et al. \cite{klartag-kozma} that for abelian $G$ we may take $C_1=16$, which is essentially sharp.

\bigskip

\subsection{Property $(\tau)$ and towers of coverings}
A finitely generated group $\Gamma$ is said to have property $(\tau)$ with respect to a sequence of finite-index normal subgroups $\{\Gamma_n\}_n$ if the Cheeger constant $h(\Gamma/\Gamma_n)$ of the Cayley graph of the finite group $\Gamma/\Gamma_n$ with respect to a fixed generating set of $\Gamma$ is bounded away from zero, independently of $n$. If $\Gamma$  has property $(\tau)$ with respect to the family of all its finite-index normal subgroups then one simply says that $\Gamma$ has property $(\tau)$. This property is independent of the choice of generating set and stable under passing to a finite-index subgroup. Property $(\tau)$ was introduced and studied by Lubotzky; see \cite{breuillard-pcmi,lubotzky-book, lubotzky-weiss} for background.

It is clear that a finitely generated group that admits a finite-index subgroup with infinite abelianisation cannot have property $(\tau)$, as it will have large virtually cyclic quotients. The converse is not true, because of the existence of amenable finitely generated infinite torsion groups, such as the Grigorchuk group (see e.g. \cite{lubotzky-weiss}). However, it is not known whether or not the converse holds in the category of finitely presented groups.

In connection with Thurston's virtual first Betti number conjecture regarding the topology of $3$-manifolds, Lackenby \cite{lackenby-israel, lackenby-algebra, lack} studied various extra conditions of an algebraic or topological nature on a finitely presented group without property $(\tau)$ that force the existence of a finite-index subgroup with infinite abelianisation. See also \cite{lack.icm}. In particular, he showed that if the Cheeger constant $h(\Gamma/\Gamma_n)$ decays faster than the square root of $|\Gamma/\Gamma_n|$, then $\Gamma$ has a finite-index subgroup with infinite abelianisation \cite[Theorem 1.1]{lack}. As a consequence of Theorem \ref{thm:gromovfinite}, we can remove the finite presentability assumption and strengthen Lackenby's result as follows.

\begin{corollary}[Strong failure of $(\tau)$ implies positive virtual Betti number -- group form]\label{cover-group} Let $\Gamma$ be a finitely generated group, and $\{\Gamma_n\}_n$ an infinite family of finite-index normal subgroups such that $h(\Gamma/\Gamma_n)[\Gamma:\Gamma_n]^{\eps}$ is bounded independently of $n$, for some fixed $\eps>0$. Then $\Gamma$ has a finite-index subgroup with a surjective homomorphism onto $\Z$.
\end{corollary}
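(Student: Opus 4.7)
The strategy is to apply Corollary \ref{smallcheeger} followed by Theorem \ref{thm:gromovfinite} to the finite quotients $G_n := \Gamma/\Gamma_n$, producing virtually abelian quotients of bounded complexity but growing cardinality, and then to pigeonhole to a single finite-index subgroup of $\Gamma$ with infinite abelianisation.

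Since $\Gamma$ is finitely generated it has only finitely many subgroups of each given finite index, so after thinning the family we may assume $|G_n|=[\Gamma:\Gamma_n]\to\infty$. Fix $\eps'\in(0,\eps)$ and let $S$ be a symmetric generating set of $\Gamma$, with image $S_n$ in $G_n$. The hypothesis yields $h(G_n)\leq C|G_n|^{-\eps}$, which for $n$ large is bounded above by $\min\{1/4,(|S_n|/|G_n|)^{\eps'}\}$, so Corollary \ref{smallcheeger} makes $G_n$ an $\eps'/3$-almost flat Cayley graph. Applying Theorem \ref{thm:gromovfinite} with, say, $\beta=3/4$, I obtain a normal subgroup $H_n\trianglelefteq G_n$ contained in $S_n^{O(\gamma_n^{3/4})}$, where $\gamma_n:=\diam_{S_n}(G_n)$, such that $Q_n:=G_n/H_n$ contains an abelian subgroup of index at most some constant $c$ depending only on $\eps$.

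Next I show $|Q_n|\to\infty$. Any $g\in G_n$ of word length $\gamma_n$ factors as $g=wh$, where $w$ is a shortest representative of the coset $gH_n$ in $Q_n$ and $h\in H_n$; taking lengths gives $\gamma_n\leq \diam_{S_n}(Q_n)+O(\gamma_n^{3/4})$, so $\diam_{S_n}(Q_n)\geq \gamma_n/2$ for large $n$, and in particular $|Q_n|\geq\gamma_n/2\to\infty$. Replacing the abelian subgroup of $Q_n$ by its normal core (still abelian, as an intersection of abelian conjugates) and pulling back to $\Gamma$, I produce a normal subgroup $B_n\trianglelefteq\Gamma$ of index bounded in terms of $c$ only, with $B_n/\tilde H_n$ abelian of order tending to infinity, where $\tilde H_n$ is the preimage of $H_n$ in $\Gamma$.

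Finally, $\Gamma$ has only finitely many subgroups of that bounded index, so after a further pigeonhole I may assume $B_n=B$ for every $n$. Then $B$ is finite-index in $\Gamma$, hence finitely generated (by Reidemeister--Schreier), and each abelian quotient $B/\tilde H_n$ factors through the abelianisation $B^{\mathrm{ab}}$; since these quotients have unbounded order, the finitely generated abelian group $B^{\mathrm{ab}}$ admits arbitrarily large finite quotients, so it is infinite and therefore surjects onto $\Z$, giving the desired homomorphism from a finite-index subgroup of $\Gamma$ onto $\Z$. The main obstacle is the diameter-to-cardinality step in the third paragraph: it is what converts the Cheeger-type hypothesis (repackaged as almost flatness via Corollary \ref{smallcheeger}) into a lower bound on $|A_n|$, and it relies essentially on the exponent $\beta<1$ in Theorem \ref{thm:gromovfinite}.
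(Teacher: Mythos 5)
Your proof is correct and follows essentially the same route as the paper: show the quotients $\Gamma/\Gamma_n$ are almost flat (you route through Corollary \ref{smallcheeger}, the paper applies the second inequality of Lemma \ref{lem:expand.diam} directly, but these are equivalent since the former is proved from the latter), apply Theorem \ref{thm:gromovfinite} with $\beta<1$ to obtain quotients of $\Gamma/\Gamma_n$ of diameter $\geq\gamma_n/2$ with abelian subgroups of bounded index, and then pigeonhole the resulting bounded-index subgroups of $\Gamma$ down to a single $B$ whose finitely generated abelianisation must be infinite. The only difference is that you spell out the diameter-survives step and the normal-core/pigeonhole step in more detail than the paper, which leaves them implicit.
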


Using the well-known transfer principle between spectral properties of finite covers of Riemannian manifolds and those of the Cayley graphs of the associated groups of deck transformations (see \cite{brooks}, \cite{mantuano}, \cite[Appendix]{breuillard-pcmi}), one has the following geometric consequence of Corollary \ref{cover-group}.

\begin{corollary}[Strong failure of $(\tau)$ implies positive virtual Betti number -- geometric form]\label{cover-manifold} Let $M$ be a compact Riemannian manifold and $M_n$ a sequence of finite Galois covers of $M$ of degree $d_n$. Assume that there is $\eps>0$ such that $h(M_n)(d_n)^{\eps}$ is a bounded sequence, where $h(M_n)$ denotes the Cheeger constant of $M_n$. Then $M$ has a finite cover with positive first Betti number.
\end{corollary}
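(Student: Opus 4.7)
The plan is to reduce the geometric statement to Corollary \ref{cover-group} using the standard spectral transfer principle between a compact Riemannian manifold and the Cayley graph of the associated deck transformation group.

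First, I would set $\Gamma = \pi_1(M, x_0)$, which is finitely generated since $M$ is compact, and fix a finite symmetric generating set $S$. By the Galois correspondence, each cover $M_n \to M$ corresponds to a finite-index normal subgroup $\Gamma_n \triangleleft \Gamma$ with $[\Gamma : \Gamma_n] = d_n$, and the deck group is $\Gamma/\Gamma_n$. We may assume without loss of generality that the $M_n$ are pairwise non-isomorphic, so that the $\Gamma_n$ are pairwise distinct; since a finitely generated group has only finitely many subgroups of any bounded index, this forces $d_n \to \infty$.

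Next, invoke the transfer principle (\cite{brooks, mantuano}, and the appendix of \cite{breuillard-pcmi}): there exist constants $c_1, c_2 > 0$ depending only on $M$ and $S$ such that
\[
c_1\, h(\Gamma/\Gamma_n) \le h(M_n) \le c_2\, h(\Gamma/\Gamma_n)
\]
for every $n$. Substituting into the hypothesis $h(M_n) d_n^\eps \le C$ yields that $h(\Gamma/\Gamma_n) [\Gamma : \Gamma_n]^\eps$ is bounded independently of $n$. Corollary \ref{cover-group} then produces a finite-index subgroup $\Gamma' \le \Gamma$ admitting a surjection $\phi : \Gamma' \twoheadrightarrow \Z$. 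Let $M'$ be the finite Riemannian cover of $M$ corresponding to $\Gamma'$; by Hurewicz and universal coefficients one has $H^1(M'; \Z) \cong \mathrm{Hom}(\Gamma'^{ab}, \Z) \cong \mathrm{Hom}(\Gamma', \Z)$, so $\phi$ represents a non-zero class in $H^1(M'; \Z)$ and hence $b_1(M') \ge 1$, as required.

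The only real obstacle is invoking the transfer principle in the correct quantitative form: one needs both sides of the above inequality to hold with multiplicative constants depending on $M$ and $S$ alone and not on the cover $M_n$. This is standard and is exactly the content of the references cited; the essential ingredient is to discretise using a fundamental domain of fixed bounded geometry for the $\Gamma$-action on the universal cover and to compare the isoperimetric profile of $M_n$ with that of the Cayley graph of $\Gamma/\Gamma_n$. Once this comparison is in hand, the rest is a direct translation between the group and manifold sides.
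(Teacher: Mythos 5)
Your proof is correct and follows essentially the same route as the paper: the paper's own proof is a one-line invocation of the transfer-principle inequality \eqref{eq:cheeger.fund.grp} together with Corollary~\ref{cover-group}, exactly as you do. The extra details you supply (the Galois correspondence, the reduction to pairwise non-isomorphic covers so that the $\Gamma_n$ form an infinite family, and the Hurewicz/universal-coefficients step turning a surjection onto $\Z$ into a positive first Betti number) are all correct and merely make explicit what the paper leaves implicit.
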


Note that we could have stated these corollaries with $\lambda_1$ in place of $h$, because of the Cheeger-Buser inequalities $(\ref{groupbounds})$. We take this opportunity to raise the following question.

\bigskip

\noindent \emph{Question:} Can the assumption that the covers are Galois in Corollary \ref{cover-manifold}, or the assumption that $\Gamma_n$ is normal in Corollary \ref{cover-group}, be removed? Suppose even that the (not necessarily normal) finite-index subgroups $\Gamma_n$ are such that the diameter of the Schreier graph of $\Gamma/\Gamma_n$ is at least $[\Gamma,\Gamma_n]^\eps$, for some fixed $\eps>0$. Does this imply that $\Gamma$ virtually maps onto $\Z$?

\bigskip

\subsection{Groups with moderate growth and mixing times}
Given positive constants $A$ and $d$,  Diaconis and Saloff-Coste \cite{dsc} say that the finite group $G$ has \emph{$(A,d)$-moderate growth} with respect to a finite generating set $S$ if, writing $\gamma:=\diam_S(G)$ for the diameter of $G$ with respect to $S$, we have
\begin{equation}\label{moderatedef}
|S^n|\ge\frac{1}{A}\left(\frac{n}{\gamma}\right)^d|G|
\end{equation}
whenever $1\le n\le\gamma$. It is clear from the definition, taking $n=1$, that $(G,S)$ is $\frac{1}{2d}$-almost flat, provided $|G|/|S|$ is large enough (in fact this holds as soon as the diameter $\diam_S(G)$ is larger that some constant depending only on $A$ and $d$). In particular, by Theorem  \ref{thm:gromovfinite} such a group has a large virtually cyclic quotient. Using Theorem \ref{thm:scales} we can prove that, conversely, every $\eps$-almost flat group is of moderate growth.

\begin{corollary}[Groups with moderate growth  versus almost flat groups]\label{cor:diac.sc}
A group has moderate growth if and only if it is almost flat. More precisely, given $A,d>0$ there is $C(A,d)>0$ such that every finite group with $(A,d)$-moderate growth and diameter at least $C(A,d)$ is $\frac{1}{2d}$-almost flat. Conversely, given $\eps>0$ there are $A,d>0$ depending only on $\eps$, such that every $\eps$-almost flat group has $(A,d)$-moderate growth.
\end{corollary}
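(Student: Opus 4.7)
For the easy direction, $(A,d)$-moderate growth with $\gamma \ge C(A,d)$ implies $\frac{1}{2d}$-almost flatness immediately: setting $n=1$ in \eqref{moderatedef} gives $|G|/|S| \le A\gamma^d$, so $(|G|/|S|)^{1/(2d)} \le A^{1/(2d)}\gamma^{1/2} \le \gamma$ as soon as $\gamma \ge A^{1/d}$, and $C(A,d) := A^{1/d}$ works.

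For the converse, assume $(G,S)$ is $\eps$-almost flat, so $\gamma \ge (|G|/|S|)^\eps$. My plan is to locate a doubling scale, invoke Theorem \ref{thm:scales} to upgrade it to doubling at all larger scales, iterate to obtain moderate growth on the upper range of scales, and handle small scales separately. The hard step is locating the initial doubling scale, via a pigeonhole argument that must be balanced carefully so that the resulting doubling constant depends on $\eps$ alone.

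Concretely, I would fix $K = K(\eps)$ large (say $K = 2^{4/\eps}$), set $n_0 := n_0(K)$ and $\theta := \theta(K)$ as in Theorem \ref{thm:scales}, and inductively define $n_{i+1} := 2n_i + 1$ starting from $n_0$. If no scale $n_i \in [n_0,\gamma]$ satisfied $|S^{2n_i+1}| \le K|S^{n_i}|$, iterating the reverse inequality would produce $|S^{n_k}| > K^k|S|$ for $k$ as large as $\approx \log_2(\gamma/n_0)$. Combined with $|S^{n_k}| \le |G|$ and the almost-flatness bound $|G|/|S| \le \gamma^{1/\eps}$, this forces $\log_2 K \le 2/\eps$ up to lower-order terms (once $\gamma$ exceeds a constant depending on $n_0$), contradicting the choice of $K$. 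The same counting bounds the doubling scale so obtained by $n \le n_0 \gamma^{\eps/\log_2 K} \le n_0 \gamma^{1/4}$. Invoking Theorem \ref{thm:scales} with $c=2$ then yields $|S^{2m}| \le \theta^2|S^m|$ for every $m \ge n$; iterating this $k = \lceil\log_2(\gamma/m)\rceil$ times and using $|S^{2^k m}| = |G|$ produces
\begin{equation*}
|S^m| \ge \theta^{-2}(m/\gamma)^{2\log_2\theta}|G|,
\end{equation*}
which is moderate growth with exponent $d_1 := 2\log_2\theta$ and constant $A_1 := \theta^2$ for $m \in [n,\gamma]$.

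For scales $m < n$ I would use the trivial bound $|S^m| \ge |S|$ together with $n \le n_0\gamma^{1/4}$ and $|G|/|S| \le \gamma^{1/\eps}$: the target inequality $A|S|\gamma^d \ge m^d|G|$ reduces to $A \ge n_0^d\gamma^{1/\eps - 3d/4}$, which stays bounded whenever $d \ge 4/(3\eps)$. Finally, if $\gamma$ is too small for the pigeonhole to apply, then $|G|/|S|$ is bounded purely in terms of $\eps$ and moderate growth follows trivially from $|S^m| \ge |S|$ after absorbing the constant into $A$. Taking $d := \max(d_1, 4/(3\eps))$ and $A$ large accordingly, all depending only on $\eps$, would complete the proof.
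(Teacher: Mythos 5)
Your proof is correct and follows essentially the same strategy as the paper's: locate a doubling scale via pigeonhole (you re-derive a dyadic version of the paper's Lemma~\ref{lem:5adic.ph} inline), propagate doubling to all larger scales by Theorem~\ref{thm:scales}, iterate the doubling to get moderate growth on the upper range of scales, and use the almost-flat bound directly on the small scales. One small improvement on your side: in the easy direction you observe that $n=1$ in \eqref{moderatedef} together with the threshold $\gamma\ge A^{1/d}$ already forces $\frac{1}{2d}$-almost flatness, so the appeal to Freiman's lemma (Lemma~\ref{freimanlem}) in the paper's proof is not actually needed.
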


The observation is that the almost flat condition implies the volume doubling condition at some relatively small scale (at most $(\diam_S(G))^{o(1)}$; see Lemma \ref{lem:5adic.ph}, below), and hence at all subsequent scales by Theorem  \ref{thm:scales}. Thus almost flat groups are doubling at all scales larger than $(\diam_S(G))^{o(1)}$. This is enough to guarantee moderate growth.
\bigskip

Groups with moderate growth provide examples of Cayley graphs for which the mixing time of the simple random walk behaves quadratically in the diameter. This is the main result proved by Diaconis and Saloff-Coste in
\cite{dsc}, which now translates as follows. We write $\mu_G$ for the uniform probability measure on $G$, $\mu_S$ for the uniform probability measure on $S$, and $\mu_S^{(n)}$ for the $n$-th convolution power of $\mu_S$.

\begin{corollary}[Mixing time]\label{cor:mixing}
The relaxation time and the mixing times of almost flat Cayley graphs are comparable to $(\diam_S(G))^2$. More precisely, given  $\eps>0$, there are positive constants $B,C,D>0$, depending on $\eps$ only, such that whenever $G$ is a finite group whose Cayley graph $(G,S)$ is $\eps$-almost flat and satisfies $\gamma  :=\diam_S(G)\ge D$, for all $n\geq 1$ and all  $p \in [1,\infty]$ we have
$$ \frac{1}{2}e^{-C \frac{n}{\gamma^2}} \|\mu_G\|_p \leq \|\mu_S^{(n)}-\mu_G\|_p\leq B e^{-\frac{n}{8|S|\gamma^2}} \|\mu_G\|_p,$$
where $\|\cdot\|_p$ denotes the $L^p$-norm on $G$. Indeed, the upper bound holds even without the assumption that $\gamma\ge D$.
\end{corollary}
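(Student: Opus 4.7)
The plan is to treat the upper and lower bounds separately.

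For the upper bound, we reduce to the Diaconis--Saloff-Coste mixing-time theorem \cite{dsc}. By Corollary \ref{cor:diac.sc}, once $\gamma \geq D(\eps)$ the pair $(G,S)$ has $(A,d)$-moderate growth with $A, d$ depending only on $\eps$. The DSC argument then deduces a Nash inequality from moderate growth and iterates it \`a la Nash--Moser to produce the stated exponential $L^p$ heat-kernel decay uniformly in $p \in [1, \infty]$. The clause ``indeed, the upper bound holds even without the assumption that $\gamma \ge D$'' reflects the fact that the doubling required for the Nash inequality is supplied directly by Theorem \ref{thm:scales} at scale $\gamma^{o(1)}$, bypassing the full moderate-growth estimate.

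For the lower bound, the key task is to produce a second eigenvalue $\beta_1 = 1 - \lambda_1 \geq 1 - C(\eps)/\gamma^2$ of the convolution operator $P$ by $\mu_S$, \emph{without} the $|S|$-factor that a direct invocation of Corollary \ref{cheeger-gap} would introduce. We achieve this by using Theorem \ref{thm:gromovfinite} (or its refinement, Theorem \ref{struct}) to extract, after a standard group-theoretic manipulation (taking the core of the bounded-index abelian subgroup in the virtually abelian quotient of bounded rank, then projecting onto a cyclic factor), a cyclic quotient $\pi : G \twoheadrightarrow \Z/m\Z$ of diameter $\gamma' \asymp_\eps \gamma$ under the image generating set $\bar S := \pi(S)$. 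On this cyclic quotient, the Lipschitz distance function $f(x) := d_{\bar S}(x, \bar 0)$ has Dirichlet energy at most $1/2$ (by $1$-Lipschitzness and the stochasticity of the walk) and variance of order $\gamma^2$, so Rayleigh's principle gives $\lambda_1(\Z/m\Z) \leq C(\eps)/\gamma^2$. The corresponding eigenfunction may be taken to be $\phi := \mathrm{Re}(\chi_{k^*}) \circ \pi$ for an extremal character $\chi_{k^*}$ of $\Z/m\Z$; this $\phi$ is an eigenfunction of $P$ on $G$ with eigenvalue $\beta_1 \geq 1 - C(\eps)/\gamma^2$ satisfying $\phi(e) = 1 = \|\phi\|_\infty$ and $\sum_g \phi(g) = 0$.

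The identity $P^n \phi(e) = \beta_1^n \phi(e)$, together with the mean-zero property of $\phi$, yields
\[
\sum_g \bigl(\mu_S^{(n)}(g) - \mu_G(g)\bigr)\phi(g) \;=\; \beta_1^n,
\]
so H\"older's inequality gives $\|\mu_S^{(n)} - \mu_G\|_{L^1(\text{counting})} \geq \beta_1^n \geq e^{-Cn/\gamma^2}$ for $\gamma \geq D$. Setting $h := |G|(\mu_S^{(n)} - \mu_G)$, the desired inequality for general $p$ rewrites as $\|h\|_{L^p(\mu_G)} \geq \tfrac{1}{2}e^{-Cn/\gamma^2}$, and since $p \mapsto \|h\|_{L^p(\mu_G)}$ is monotone increasing on the probability space $(G, \mu_G)$, the $p=1$ case just proved implies all the others. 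The main obstacle is the removal of the $|S|$-factor from the spectral-gap upper bound, which cannot be done by a Cheeger--Buser argument alone and genuinely relies on the algebraic structure of almost flat groups provided by Theorem \ref{thm:gromovfinite}.
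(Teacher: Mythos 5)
Your upper-bound argument matches the paper's: both reduce to moderate growth via Corollary \ref{cor:diac.sc} and then invoke the Diaconis--Saloff-Coste theorem for the $L^2$ heat-kernel decay, passing to general $p$ by Proposition \ref{basicmixing}. The only quibble there is your explanation of the ``even without $\gamma\ge D$'' clause: the paper's proof of Corollary \ref{cor:diac.sc} already shows that small-diameter almost flat groups have moderate growth (with slightly worse constants), so nothing special needs to be ``bypassed''; the $\gamma\ge D$ hypothesis is needed only for the lower bound, to make $\beta_S$ bounded away from zero.

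Your lower-bound argument, by contrast, is both more complicated than the paper's and contains a genuine gap, and the stated motivation for it rests on a misreading. You assert that a direct invocation of Corollary \ref{cheeger-gap} ``would introduce'' an unwanted $|S|$-factor and that removal of this factor ``cannot be done by a Cheeger--Buser argument alone and genuinely relies on the algebraic structure.'' But the separate bound \eqref{eq:ch-gap.2} of Corollary \ref{cheeger-gap}, namely $\lambda_1 \le C_3|S|/\gamma^2$, is exactly calibrated so that $\beta_S = 1-\lambda_1/|S| \ge 1-C_3/\gamma^2$ with no residual $|S|$-dependence; combined with Proposition \ref{basicmixing}\eqref{basicin-lower}, which gives $\beta_S^n \le \|\mu_S^{(n)}-\mu_G\|_p/\|\mu_G\|_p$, the lower bound is immediate. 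That bound on $\lambda_1$ comes from Lemma \ref{lambdadoubling}, which is itself a Rayleigh-quotient argument using the distance function on $G$ (not Cheeger--Buser), together with the uniform doubling supplied by Theorem \ref{thm:scales} -- neither Theorem \ref{thm:gromovfinite} nor Theorem \ref{struct} is used at all. Your detour through a cyclic quotient and its characters reproves a weaker version of this, and moreover has a gap: Theorem \ref{struct}(3) only yields a cyclic quotient of a bounded-index normal subgroup $G_0$, not of $G$, and the ``standard group-theoretic manipulation'' you sketch (core of the abelian subgroup, project onto a cyclic factor) does not in general produce a $G$-normal kernel, so the character you pull back is an eigenfunction for the walk on $G_0$ but not necessarily for $P_{\mu_S}$ on $G$. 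Since the pairing identity $\sum_g(\mu_S^{(n)}(g)-\mu_G(g))\phi(g)=\beta_1^n$ requires $\phi$ to be an eigenfunction of $P_{\mu_S}$ on all of $G$, this step would need to be repaired; the much simpler route in the paper avoids the issue entirely.
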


We stress that the bounds for $C$ and $B$ obtained by Diaconis and Saloff-Coste are explicit when expressed in terms of the moderate growth parameters $A$ and $d$. In the above reformulation in terms of $\eps$-almost flatness, we lose the explicit dependence, because the moderate growth parameters $A$ and $d$ given by the second part of Corollary \ref{cor:diac.sc} depend on $\eps$ in a \emph{non-explicit} manner (due to our reliance on the non-explicit Theorem \ref{thm:bgt} below).

The mixing times (for each $L^p$ norm) are at least half the diameter, but for a general Cayley graph of bounded degree, they are always in $O(\gamma^3)$, where $\gamma$ is the diameter, as easily follows from the lower bound on $\lambda_1$ from $(\ref{groupbounds})$. This bound is attained for example for the uniform mixing time on the lamplighter group with cyclic base (e.g. see \cite{peres-revelle}). At the opposite extreme, if the Cayley graph is an expander graph (uniform lower bound on $\lambda_1$), then the mixing times are comparable to the diameter and a cut-off phenomenon is expected (and proved in some cases, see for that matter \cite{lubetzky-peres}). For almost flat Cayley graphs, no cut-off phenomenon arises.

We conclude with the following refinement of the Diaconis--Saloff-Coste result showing that doubling at one small enough scale is sufficient to guarantee quadratic mixing time.

\begin{theorem}[Quadratic mixing under a doubling condition]\label{mix} Given $K \geq 1$, there is $n_1(K)$ such that the following holds. Consider the Cayley graph $\mathcal{G}$ of a finite group $G$  with generating set $S$ and diameter $\gamma$. Assume that $|S^{2n+1}| \leq K|S^n|$ for some $n \geq n_1(K)$ with $n\leq \gamma^{2/3}$. Then the mixing times of the simple random walk on $\mathcal G$ are comparable to $\gamma^2$ up to multiplicative constants depending on $K$ and $|S|$ only.
\end{theorem}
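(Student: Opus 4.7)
My plan is to combine Theorem \ref{thm:scales} with the Diaconis--Saloff-Coste quadratic mixing theorem \cite{dsc} via a coarsening of the Cayley graph. First, set $n_1(K) := n_0(K)$ as in Theorem \ref{thm:scales}. The doubling hypothesis then implies $|S^{cm}| \leq \theta(K)^c |S^m|$ for every $m \geq n$ and $c \in \N$; taking $c = 2$ and iterating yields polynomial growth of degree $d := 2\log_2 \theta(K)$ at scales $\geq n$, and specialising to $m' = \gamma$ produces the moderate-growth-type bound
\begin{equation*}
|S^m| \geq (m/\gamma)^d |G| \qquad \text{for all } n \leq m \leq \gamma. \tag{$\star$}
\end{equation*}

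If $(\star)$ held down to $m = 1$, Corollary \ref{cor:mixing} would apply directly. To cope with the missing small-scale range, I would pass to the Cayley graph of $G$ with respect to the coarsened generating set $T := S^n$, which has diameter $\gamma_T = \lceil \gamma/n \rceil$: the inequality $(\star)$ becomes $|T^k| \geq (k/\gamma_T)^d |G|$ for all $1 \leq k \leq \gamma_T$, which is the full $(1,d)$-moderate-growth condition for the $T$-walk. Applying \cite{dsc} directly to that walk yields a Nash inequality of the form $\|f\|_2^{2+4/d} \leq C(K)\, \gamma_T^{2}\, |G|^{-2/d}\, \mathcal{E}_T(f,f)\, \|f\|_1^{4/d}$ for all mean-zero $f$.

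To transfer this estimate to the original $S$-walk I would use a canonical-paths argument: write each $t \in T$ as an $S$-word of length $\ell(t) \leq n$ and route every $T$-edge through the corresponding $S$-path. A standard congestion computation yields $\mathcal{E}_T(f,f) \leq n^2 |S|\, \mathcal{E}_S(f,f)$; combining this with the identity $n^2 \gamma_T^2 \asymp \gamma^2$ upgrades the $T$-Nash inequality to
\[
\|f\|_2^{2+4/d} \leq C(K)\, |S|\, \gamma^2\, |G|^{-2/d}\, \mathcal{E}_S(f,f)\, \|f\|_1^{4/d},
\]
and the usual ultracontractivity arguments then give the $L^p$-mixing times of the $S$-walk as $O(\gamma^2)$, with constants depending only on $K$ and $|S|$. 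The matching lower bound follows from $(\star)$ via the standard test-function argument producing $\lambda_1(S) \leq C(K,|S|)/\gamma^2$.

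The main obstacle is the Dirichlet-form comparison: one must verify that the passage to $T = S^n$ costs only a controlled polynomial factor in $n$ and $|S|$, rather than an exponential factor in $n$, and that the Nash constant supplied by \cite{dsc} for the $T$-walk is independent of $|T|$. The assumption $n \leq \gamma^{2/3}$ keeps $\gamma_T \geq \gamma^{1/3}$ large enough for the DSC machinery on the $T$-walk to be effective, and simultaneously guarantees that the small-scale range $m < n$, over which we have no doubling control, contributes only sub-leading corrections to the mixing time.
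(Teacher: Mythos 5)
The approach you propose --- coarsening to $T=S^n$, applying Diaconis--Saloff-Coste to the $T$-walk, and transferring the resulting Nash inequality back to the $S$-walk by a Dirichlet-form comparison --- is genuinely different from the paper's, which instead uses Proposition~\ref{doubling-structure} to produce a normal subgroup $H\subset S^{8n}$, splits $\ell^2(G)$ into $H$-invariant functions and functions with zero mean on $H$-cosets, applies DSC to $G/H$ on the first piece and the elementary Poincar\'e estimate of Lemma~\ref{lambda1quotient} on the second. Unfortunately your route has a genuine gap, and you yourself point at it (``the main obstacle is the Dirichlet-form comparison'') without resolving it.

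The central problem is that the comparison $\mathcal{E}_T\le n^2|S|\,\mathcal{E}_S$ only transfers Poincar\'e-type information, not the ultracontractivity that produces the extra prefactor $\|\mu_G\|_p$ in Corollary~\ref{cor:mixing}. Pushing a $T$-Nash inequality of degree $d$ through this comparison produces an $S$-Nash inequality with the \emph{same} degree $d$, and that degree is calibrated to the $T$-walk's local geometry at scale $\geq n$, not to the $S$-walk's behaviour below scale $n$ (which is precisely the information the coarsening discards). A direct test on $f=1_{\{e\}}-1/|G|$ shows that any Nash inequality of degree $d$ for the $S$-walk must have constant at least of order $(|G|/|S|)^{2/d}$; for the small degree appropriate to the $T$-walk this lower bound is vastly larger than the $\gamma^2|S|$ you would need, so the transferred inequality cannot be the one you want. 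The diagnostic symptom is that nothing in your argument actually uses $n\le\gamma^{2/3}$ --- the remark about ``sub-leading corrections'' is not backed by any estimate --- so if your argument were sound it would give $T_2\lesssim_{K,|S|}\gamma^2$ for every $n\ll\gamma$. But Example~\ref{sharpex} ($G=L_M\times\Z/N\Z$ with $M=N^\alpha$, $\alpha>2/3$) satisfies doubling at scale $n\sim M\sim\gamma^\alpha$ and has $T_2\sim M^3\gg\gamma^2$, which would be contradicted. The paper's proof uses $n\le\gamma^{2/3}$ essentially: it produces $H$ with $\diam_S(H)\lesssim\gamma^{2/3}$, and the mixing contribution of the ``local'' part is controlled by $\gamma_H^2\log|H|\lesssim\gamma^{4/3}\cdot\gamma^{2/3}\log|S|\lesssim\gamma^2$ via Lemma~\ref{lambda1quotient}; your coarsening has no analogous mechanism for handling the sub-$n$ scales.

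Your preliminary steps are fine (the derivation of the moderate-growth-type bound $(\star)$ from Theorem~\ref{thm:scales} via repeated doubling, and the verification that $(G,T)$ then has $(A,d)$-moderate growth), and the lower bound via Lemma~\ref{lambdadoubling} is exactly what the paper does. But the upper bound as proposed does not go through.
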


We provide an example showing that the exponent $\frac{2}{3}$ is sharp; see Example \ref{sharpex}.

\bigskip

\bigskip

The paper is organised as follows. In Section 2, we discuss approximate groups and recall the main result of \cite{bgt}, which describes the structure of approximate subgroups of arbitrary groups. We also prove Theorem \ref{thm:scales} modulo a result on nilprogressions proved in Section 3. Section 3 is devoted to nilprogressions. We give a summary of several competing definitions and explain the relationship between them. Then we prove that their powers have uniform doubling, thus completing the proof of Theorem \ref{thm:scales}. Section 4 is devoted to Theorem \ref{thm:gromovfinite} and several extra structural results on almost flat groups. We also describe there an example pertaining to a non-commutative version of the Freiman-Ruzsa conjecture and showing that super-polynomial bounds are required in \cite{bgt}. In Section 5 we discuss bounds on the Cheeger constant and $\lambda_1$ for almost flat Cayley graphs and prove Corollaries \ref{cover-group} and \ref{cover-manifold} regarding property $(\tau)$. Finally Section 6 is devoted to random walks and mixing times. We establish Corollary \ref{cor:mixing} and Theorem \ref{mix}.

\subsection*{Acknowledgements}The authors are grateful to David Simmons, Romain Tessera and Tianyi Zheng for helpful conversations. We are also grateful to Persi Diaconis and Terry Tao for their comments on the paper. Both authors are supported by ERC grant GA617129 `GeTeMo'. M.T. is on leave from a Junior Research Fellowship at Homerton College, Cambridge, where some of this work was carried out.

\section{Approximate groups}\label{app-sec}

In recent years there has been much effort to describe the structure of certain finite subsets of groups called \emph{approximate subgroups}.  The reader may consult the surveys \cite{bgt.survey,app.grps,sand.survey} for details on the background to this field and its applications.  The definition of an approximate group is originally due to Tao \cite{tao.product.set}, and runs as follows.

\begin{definition}[Approximate group]\label{def:app.grp}
Let $G$ be a group and let $K\ge1$. A finite subset $A\subset G$ is said to be a \emph{$K$-approximate subgroup of $G$}, or just a \emph{$K$-approximate group}, if it is symmetric and if there is a subset $X$ with $|X|\le K$ such that $A^2\subset XA$.
\end{definition}

Here, and throughout the paper, for subsets $X$ and $Y$ of a group we denote by $XY$ the set $\{xy:x\in X,y\in Y\}$ and by $X^{-1}$ the set $\{x^{-1}:x\in X\}$.

It may not be immediately obvious that approximate groups should have anything to do with growth of groups. However, the following simple argument gives a reason to expect that the two should be related, at least on some level.
\begin{lemma}\label{lem:sm.doub.app.gp}
Let $G$ be a group, and let $S\subset G$ be a finite symmetric set. Suppose that
\begin{equation}\label{eq:sm.db.ap.gp}
|S^{5n}|\le K|S^n|,
\end{equation}
for some integer $n$, then $S^{2n}$ is a $K$-approximate subgroup of $G$.
\end{lemma}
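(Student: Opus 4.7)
The goal is to exhibit a set $X \subseteq G$ with $|X| \le K$ such that $(S^{2n})^2 = S^{4n} \subseteq X \cdot S^{2n}$; note that $S^{2n}$ is automatically symmetric because $S$ is. The natural tool here is the Ruzsa covering lemma, or rather the elementary packing argument that underlies it. The hypothesis $|S^{5n}| \le K|S^n|$ is tailor-made for a covering of $S^{4n}$ by translates of $S^n$, whose pairwise products then yield a covering by $S^{2n}$.

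Concretely, I would choose a maximal subset $X \subseteq S^{4n}$ with the property that the translates $\{xS^n : x \in X\}$ are pairwise disjoint. For each $x \in X$ we have $xS^n \subseteq S^{4n} \cdot S^n = S^{5n}$, so disjointness gives
\[
|X| \cdot |S^n| \;=\; \Bigl|\bigsqcup_{x \in X} xS^n\Bigr| \;\le\; |S^{5n}| \;\le\; K |S^n|,
\]
and hence $|X| \le K$. By maximality, for every $y \in S^{4n}$ the translate $yS^n$ intersects some $xS^n$ with $x \in X$, so there exist $s,t \in S^n$ with $ys = xt$. Since $S$ is symmetric, $s^{-1} \in S^n$, and therefore
\[
y \;=\; x \cdot t s^{-1} \;\in\; x \cdot S^n \cdot S^n \;=\; x \cdot S^{2n} \;\subseteq\; X \cdot S^{2n}.
\]
This gives $S^{4n} \subseteq X \cdot S^{2n}$, which is exactly $(S^{2n})^2 \subseteq X \cdot S^{2n}$, completing the verification that $S^{2n}$ is a $K$-approximate subgroup.

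There is no real obstacle: the statement is a direct application of the standard Ruzsa-type covering argument, and the only thing worth being careful about is using symmetry of $S$ (in particular $(S^n)^{-1} = S^n$) when converting the $S^n(S^n)^{-1}$ that the covering lemma produces into $S^{2n}$, and observing that $S^{2n}$ is symmetric so the definition of approximate group is satisfied.
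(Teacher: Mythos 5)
Your proposal is correct and follows essentially the same Ruzsa covering argument as the paper: a maximal $X\subseteq S^{4n}$ with disjoint translates $xS^n$, bounded in size by $|S^{5n}|/|S^n|\le K$, then maximality and symmetry give $S^{4n}\subseteq XS^{2n}$. The paper's only cosmetic difference is that it first replaces $S$ by $S^n$ to reduce to the case $n=1$.
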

\begin{proof} See Ruzsa's covering lemma \cite{tao-vu}. Upon replacing $S$ by $S^n$ we may assume that $n=1$. Let $X$ be a subset of $S^4$ that is maximal with respect to the condition that the sets $xS$ with $x\in X$ are disjoint, and note that this condition implies that $|XS|=|X||S|$. Given (\ref{eq:sm.db.ap.gp}), and since $XS\subset S^5$, this implies in particular that $|X|\le K$. It is therefore sufficient to show that $S^4\subset XS^2$. This holds, because if $z\in S^4$ then, by the maximality of $X$, there are $x\in X$ and $s_1,s_2\in S$ such that $zs_1=xs_2$. The symmetry of $S$ then implies that $z=xs_2s_1^{-1}\in XS^2$, as claimed.
\end{proof}

Recently, Green, Tao and the first author \cite{bgt} proved the following structure theorem for approximate subgroups of arbitrary groups.

\begin{theorem}[Structure of approximate groups {\cite[Theorem 1.6]{bgt}}]\label{thm:bgt}
Let $A$ be a $K$-approximate subgroup of a group $G$. Then there is a finite subgroup $H\subset A^4$ and a subgroup $\Gamma$ containing $H$ as a normal subgroup such that $\Gamma/H$ is nilpotent of rank and class at most $O_K(1)$, and such that $A$ is contained in fewer than $O_K(1)$ left-cosets of $\Gamma$.
\end{theorem}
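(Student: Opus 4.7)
The plan is to follow the ``Lie model'' strategy pioneered by Hrushovski, reducing the classification of approximate groups to the theory of locally compact groups and to Hilbert's fifth problem (Gleason--Yamabe). The guiding intuition is that in the previously understood cases --- Freiman's theorem for abelian groups and its nilpotent analogue --- the structure of $A$ is governed by an ambient Lie group; for a general $K$-approximate group one must produce such an ambient Lie group a posteriori, by taking a limit. Concretely, suppose for contradiction that for some fixed $K$ the conclusion fails along a sequence $A_n\subset G_n$ of $K$-approximate groups of increasing ``complexity''. Form the ultraproduct $\mathbf A=\prod_n A_n/\mathcal U$ inside $\mathbf G=\prod_n G_n/\mathcal U$ along a non-principal ultrafilter $\mathcal U$; by the transfer principle $\mathbf A$ remains a $K$-approximate group. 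Using an almost-translation-invariance argument of Croot--Sisask type applied to the left-regular action on $\mathbf A$, one identifies a normal subgroup $H_\infty \trianglelefteq \langle\mathbf A\rangle$ contained in arbitrarily small powers of $\mathbf A$, such that $L:=\langle\mathbf A\rangle/H_\infty$ is locally compact with the image of $\mathbf A$ a compact neighbourhood of the identity.

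Next, apply Gleason--Yamabe to $L$: there is an open subgroup $L_0\le L$ and a compact normal subgroup $N\triangleleft L_0$ such that $\mathfrak L:=L_0/N$ is a connected Lie group, compactly generated by the image of $\mathbf A\cap L_0$. The key point is to show $\mathfrak L$ is nilpotent, with dimension and nilpotency class bounded in terms of $K$ only. This is done by pushing the $K$-approximate property through the projection onto $\mathfrak L$ to get a Haar-measure doubling estimate for small balls, and hence polynomial volume growth of degree $O_K(1)$. Since connected Lie groups of polynomial growth are nilpotent (Guivarc'h--Jenkins), and since their dimension and step are controlled by the growth degree, one concludes that $\mathfrak L$ is nilpotent of rank and class $O_K(1)$.

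Finally, the compact normal subgroup $N\triangleleft L_0$ lifts to a finite normal subgroup sitting inside a bounded power of $\mathbf A$; a Ruzsa-covering refinement, together with a careful choice of the depth at which $H_\infty$ is constructed, pushes this into $\mathbf A^4$, producing the required $H$. Pulling back a suitably small neighbourhood of the identity in $\mathfrak L$ together with a cocompact lattice yields the desired $\Gamma\supseteq H$, with $\Gamma/H$ nilpotent of controlled rank and class; finitely many cosets of $\Gamma$ cover $\mathbf A$ since its image in $\mathfrak L$ is compact and $L_0$ is of finite index in (the subgroup of $L$ generated by) the image of $\mathbf A$. Re-applying the transfer principle gives the analogous structure for almost every $A_n$, contradicting the assumption. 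I expect the main obstacle to be the nilpotency/dimension bound for $\mathfrak L$: converting the combinatorial doubling on $\mathbf A$ into genuine polynomial growth on $\mathfrak L$ with degree depending only on $K$, and then exploiting Gleason--Montgomery--Zippin-style escape lemmas to prevent the rank and class from blowing up as one chooses ever-smaller neighbourhoods of the identity. A secondary technical difficulty is the refinement $H\subset A^4$ (rather than some larger $A^{C(K)}$), which requires tracking explicit scales through the Lie-model construction and using that $N$ can be chosen inside any prescribed neighbourhood of the identity of $L_0$.
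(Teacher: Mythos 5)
The broad outline you sketch --- form an ultraproduct \`a la Hrushovski, build a locally compact model via a Croot--Sisask/Sanders-type almost-invariance argument, apply Gleason--Yamabe to extract a Lie quotient, and then exploit the doubling of $\mathbf A$ to constrain that Lie group --- is genuinely the skeleton of the Breuillard--Green--Tao proof, and you have the first half right. But there is a fatal gap at the step you call ``the key point,'' namely the claim that connected Lie groups of polynomial growth are nilpotent, attributed to Guivarc'h--Jenkins. That statement is false. The universal cover of the Euclidean motion group $E(2)=\R^2\rtimes SO(2)$ is a simply connected, three-dimensional, solvable Lie group of type $R$ (all eigenvalues of $\mathrm{ad}$ lie on the unit circle), hence of polynomial growth by Guivarc'h's theorem, yet it is not nilpotent: in its Lie algebra $[R,T_1]=T_2$, $[R,T_2]=-T_1$, and the lower central series stabilises at the non-trivial ideal $\mathrm{span}(T_1,T_2)$. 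Guivarc'h--Jenkins characterises polynomial growth as type $R$, not as nilpotency, so the inference ``polynomial growth of $\mathfrak L$ $\Rightarrow$ $\mathfrak L$ nilpotent of bounded rank and class'' simply does not hold, and your argument breaks down exactly where you claim to be done.

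What BGT actually do at this point is quite different and is the real content of their paper. Rather than deducing nilpotency from a black-box theorem about Lie groups of polynomial growth, they argue by induction on $\dim\mathfrak L$ (the dimension bound in terms of $K$ being a genuine consequence of the doubling, so that part of your sketch is fine). In the inductive step they use Gleason--Montgomery--Zippin-type escape and commutator estimates to manufacture a one-parameter subgroup that is \emph{central} relative to the approximate group, quotient by it using the ``strong approximate group'' formalism, and thereby produce a lower-dimensional model; the nilpotency of $\Gamma/H$ is built up one central element at a time through this descent, and the bound on the nilpotency class falls out of the length of the induction, not from a growth theorem. You mention escape lemmas only as a ``secondary technical difficulty'' to worry about later, but in fact they are the engine of the whole proof and cannot be bypassed by citing Guivarc'h. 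Relatedly, the refinement $H\subset A^4$ (as opposed to $A^{C(K)}$) is not a bookkeeping issue one ``tracks through the construction''; it requires the strong approximate group machinery and a careful nesting of scales that your sketch does not account for. In short: right strategy through the construction of the Lie model, but the crucial ``Lie model is nilpotent'' step is wrong as stated, and the inductive escape-and-quotient argument that replaces it is the heart of the matter.
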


As usual $O_K(1)$ denotes a constant that depends on $K$ only. Note that since the group $\Gamma$ is finite-by-nilpotent it is also virtually nilpotent. As remarked in \cite{bgt}, this result easily implies Gromov's original theorem on groups with polynomial growth. More precisely one has the following somewhat stronger statement.

\begin{corollary}[One-scale Gromov's theorem]\label{grom}
For every $K\ge1$ there exists $n_0=n_0(K)\in \N$ such that if $G$ is a group generated by a finite symmetric generating set $S$ such that $|S^{5n}|\le K|S^n|$ for some $n\ge n_0$ then $G$ is virtually nilpotent.
\end{corollary}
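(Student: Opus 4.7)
My plan is to combine Lemma \ref{lem:sm.doub.app.gp} with the structure theorem, Theorem \ref{thm:bgt}. First, Lemma \ref{lem:sm.doub.app.gp} applied to the hypothesis $|S^{5n}|\le K|S^n|$ gives that $A:=S^{2n}$ is a $K$-approximate subgroup of $G$. Applying Theorem \ref{thm:bgt} to $A$ then produces a finite subgroup $H\subset A^4$ and a subgroup $\Gamma\supset H$, with $H$ normal in $\Gamma$, the quotient $\Gamma/H$ nilpotent of rank and class $O_K(1)$, and a set $X\subset G$ with $|X|\le O_K(1)$ and $A\subset X\Gamma$.

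I would next observe that $\Gamma$ is itself virtually nilpotent. The standard argument is that the centralizer $C_\Gamma(H)$ has index at most $|\mathrm{Aut}(H)|$ in $\Gamma$, and it fits into a central extension $1\to Z(H)\to C_\Gamma(H)\to C_\Gamma(H)/Z(H)\to 1$ in which $Z(H)$ is a finite abelian central subgroup and $C_\Gamma(H)/Z(H)$ embeds via $C_\Gamma(H)\to \Gamma/H$ into the nilpotent group $\Gamma/H$. A central extension of a nilpotent group by an abelian group is nilpotent, so $C_\Gamma(H)$ is a nilpotent subgroup of finite index in $\Gamma$.

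The main obstacle, and the crux of the argument, will be to show that $\Gamma$ itself has finite index in $G$; once this is done, $G$ inherits virtual nilpotency from $\Gamma$ and we are finished. The form of Theorem \ref{thm:bgt} displayed above only covers $A$ by $O_K(1)$ cosets of $\Gamma$, not all of $G$. To bridge this gap I would invoke the sharper form of the structure theorem in \cite{bgt}, which in addition exhibits $\Gamma$ as generated by a coset nilprogression sitting inside $A^{O_K(1)}$, so that $A$ and $\Gamma$ are two-sidedly commensurate. Combining this with $S\subset A$ (so that $G=\langle A\rangle$) and with the approximate-group axiom $A^2\subset XA$, one can iterate the inclusion $A\subset X\Gamma$ across successive powers $A^m$ to cover all of $G$ by only $O_K(1)$ cosets of $\Gamma$, yielding $[G:\Gamma]\le O_K(1)$. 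The lower bound $n\ge n_0(K)$ enters precisely to guarantee that $A$ is large enough for the quantitative aspects of Theorem \ref{thm:bgt} to apply at this scale.
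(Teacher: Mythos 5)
Your first two steps match the paper: apply Lemma \ref{lem:sm.doub.app.gp} to conclude that $A:=S^{2n}$ is a $K$-approximate group, then Theorem \ref{thm:bgt} to obtain $H\subset A^4$, $\Gamma\supset H$ with $\Gamma/H$ nilpotent of bounded rank and class, and $A\subset X\Gamma$ with $|X|\le O_K(1)$. Your observation that a finite-by-nilpotent group is virtually nilpotent is correct and a nice bit of extra detail (the paper states this without argument).

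However, your proposed argument for $[G:\Gamma]<\infty$ has a genuine gap. You suggest iterating $A\subset X\Gamma$ across powers of $A$ using $A^2\subset XA$. But this iteration gives $A^m\subset X^{m-1}A\subset X^m\Gamma$, and there is no reason for $|X^m|$ to stay bounded as $m\to\infty$; it could grow exponentially. So the iteration does not deliver a cover of $G=\bigcup_m A^m$ by $O_K(1)$ cosets. Invoking a sharper form of the structure theorem to get $\Gamma$ two-sidedly commensurate with $A$ does not fill this gap either, since commensurability of $A$ and $\Gamma$ does not by itself control the index of $\Gamma$ in the possibly much larger group $G$.

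The missing idea is the simple pigeonhole argument of Lemma \ref{lem:exhausts.cosets}: the chain of unions of cosets $S\Gamma\subset S^2\Gamma\subset\cdots$ is non-decreasing, and once $S^{r+1}\Gamma=S^r\Gamma$ for some $r$, an induction gives $G=S^r\Gamma$. If $S^{2n}$ meets at most $n_0(K)$ cosets of $\Gamma$ and $2n\ge n_0(K)$, this chain must stabilize before step $2n$, forcing $G=S^{2n}\Gamma$ and $[G:\Gamma]\le n_0(K)$. This is also what the hypothesis $n\ge n_0$ is actually for; it is not (as you suggest) needed for Theorem \ref{thm:bgt} to apply, since that theorem applies to any $K$-approximate group regardless of its size. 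Only the stated form of Theorem \ref{thm:bgt} together with this pigeonhole lemma is needed.
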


\begin{remark}Using Lemma \ref{doublingreduce} below, one sees that the hypothesis $|S^{5n}|\leq K|S^n|$ can be weakened into $|S^{2n+1}|\leq K|S^n|$. It is an interesting problem to determine whether one can push this further and assume only that $|S^{[\alpha n]}|\leq K|S^n|$ for some fixed $\alpha>1$.
\end{remark}

\begin{remark} If we were to allow the uniform doubling constant in the conclusion of Theorem \ref{thm:scales} to depend on the generating set $S$ then that theorem would become a straightforward application of Corollary \ref{grom}, as it is well known that large balls in Cayley graphs of finitely generated virtually nilpotent groups are doubling (see \cite{bass}, for example). However, the independence from $S$ is key to our applications.
\end{remark}

We recall the easily seen fact that if a group has polynomial growth, which is to say that $|S^n|=O(n^D)$ for some $D>0$, then there are infinitely many $n$ for which $|S^{5n}|< 6^D |S^n|$, and the hypotheses of Corollary \ref{grom} are satisfied at infinitely many scales. By contrast, the corollary asserts that only one large scale is necessary. For the reader's convenience, and also for further use below, we now recall how to derive this corollary from Theorem \ref{thm:bgt}, as was done in \cite[Corollary 11.2]{bgt}.

Theorem \ref{thm:bgt} shows that an approximate group lies in finitely many cosets of a nilpotent group. However, Corollary \ref{grom} requires that the \emph{entire} group lie in finitely many cosets of a nilpotent group. The key to overcoming this apparent difficulty is to observe that if the $n$-ball in a Cayley graph is contained in at most $n$ cosets of a subgroup, then this subgroup has index at most $n$, as follows.

\begin{lemma}\label{lem:exhausts.cosets}
Let $G$ be a group generated by a finite symmetric set $S$ containing the identity, and let $\Gamma$ be a subgroup of $G$. Suppose that for some $k\in\N$ we have $S^k$ contained in at most $k$ left-cosets of $\Gamma$. Then $G=S^k\Gamma$, and in particular $[G:\Gamma] \leq k$.
\end{lemma}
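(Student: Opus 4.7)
The plan is to let $c(m)$ denote the number of left-cosets of $\Gamma$ that meet $S^m$, and show that this counting function must stabilise at some step $m \le k$, from which it follows that $S^m\Gamma$ is all of $G$.

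First I would observe that since $1 \in S$ we have the nested chain $\{1\} = S^0 \subseteq S \subseteq S^2 \subseteq \cdots$, and hence $c(0) = 1$ and the sequence $m \mapsto c(m)$ is nondecreasing with integer values. The hypothesis says $c(k) \le k$. Since the $k$ increments $c(m+1) - c(m)$ for $m = 0, 1, \ldots, k-1$ are nonnegative integers summing to $c(k) - c(0) \le k - 1$, by pigeonhole there must exist some $m \in \{0, 1, \ldots, k-1\}$ at which $c(m) = c(m+1)$.

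At such a value of $m$, the inclusion $S^m\Gamma \subseteq S^{m+1}\Gamma$ is an equality (both are unions of the same cosets). Writing $S^{m+1}\Gamma = S \cdot (S^m\Gamma)$, we conclude that the set $T := S^m\Gamma$ satisfies $S \cdot T = T$. Since $S$ is symmetric and generates $G$, a straightforward induction on $n$ gives $S^n \cdot T = T$ for every $n \ge 0$, and taking the union over $n$ yields $G \cdot T = T$. As $1 \in T$ (because $1 \in S^m$ and $1 \in \Gamma$), this forces $G \subseteq T = S^m\Gamma \subseteq S^k\Gamma$, so $G = S^k\Gamma$. The index bound $[G:\Gamma] \le k$ is then immediate from the hypothesis that $S^k$ meets at most $k$ cosets of $\Gamma$.

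The argument is purely combinatorial and there is no real obstacle; the only point requiring a moment's care is the initialisation of the pigeonhole count, i.e.\ using that $1 \in S$ gives $c(0) = 1$ rather than $c(0) = 0$, so that the available ``budget'' for coset increments is exactly $k-1$ over $k$ steps and forces a repeat.
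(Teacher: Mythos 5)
Your proof is correct and essentially identical to the paper's: both track the nondecreasing count of cosets of $\Gamma$ met by $S^m$, use pigeonhole (with $c(k)\le k$) to force a plateau $S^m\Gamma = S^{m+1}\Gamma$ at some $m\le k$, and then induct using $1\in S$ and the fact that $S$ generates $G$ to conclude $G = S^m\Gamma\subseteq S^k\Gamma$. Your initialisation at $c(0)=1$ is a slightly cleaner way to run the pigeonhole count, but it is the same argument.
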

\begin{proof}
Since $S\Gamma\subset S^2\Gamma\subset S^3\Gamma\subset\cdots$, the number of left-cosets of $\Gamma$ having non-empty intersection with $S^r\Gamma$ is non-decreasing in $r$. Thus, if $S^k$ is contained in at most $k$ left-cosets of $\Gamma$, then $S^{r+1}\Gamma=S^r\Gamma$ for some $r\le k$.
It follows by induction that $G=S^r\Gamma$, and hence that $G=S^k\Gamma$.
\end{proof}

\begin{proof}[Proof of Corollary \ref{grom}] By Lemma \ref{lem:sm.doub.app.gp}, $S^{2n}$ is a $K$-approximate subgroup of $G$. By Theorem \ref{thm:bgt}, there exists $n_0=n_0(K)$ such that $S^{2n}$ is contained in at most $n_0$ cosets of a virtually nilpotent subgroup $\Gamma\le G$. If $n\ge n_0$ then Lemma \ref{lem:exhausts.cosets} therefore implies that $\Gamma$ is of finite index in $G$, and hence that $G$ is virtually nilpotent.
\end{proof}

We prove Theorem \ref{thm:scales} by first reducing it to the nilpotent case, which we state as follows.
\begin{theorem}[Nilpotent case]\label{prop:scales.nilp}
Let $K\ge1$ and $s\ge1$. Suppose that $S$ is $K$-approximate subgroup in some $s$-step nilpotent group $G$. Then there is constant $C=C(K,s)\geq 1$ such that for every $m\ge1$, $S^{m}$ is a $C$-approximate subgroup.
\end{theorem}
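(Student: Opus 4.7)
The plan is to reduce the problem to a nilprogression and then exploit the fact that powers of nilprogressions have uniform doubling, which will be the content of Section 3.

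First, I would apply Theorem \ref{thm:bgt} to $S$. Because $G$ is $s$-step nilpotent, the subgroup $\Gamma\le G$ produced by the theorem is itself nilpotent of class at most $s$, and a refinement of that theorem in the nilpotent setting (also available in \cite{bgt}) yields a \emph{coset nilprogression} $A=HP$ commensurable with $S$: the finite subgroup $H\subset S^4$ is normal in $\Gamma$, $P$ is a nilprogression in $\Gamma$ of rank $r=O_K(1)$ and class at most $s$, and one has $|A|\asymp_{K,s}|S|$, $S\subset X\cdot A$ with $|X|=O_{K,s}(1)$, and $A\subset S^{c}$ with $c=O_{K,s}(1)$.

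Second, in Section 3 I will establish the key volume-growth property of nilprogressions. Writing $P$ in a normal form adapted to the lower central series and using Mal'cev / Baker-Campbell-Hausdorff coordinates, I will show that $P^m$ is contained in a dilated nilprogression of the same rank $r$ and class $\le s$, but with side lengths scaled roughly by $m^{w_i}$, where $w_i$ denotes the weight of the $i$-th generator in the lower central series. Counting in these coordinates yields $|A^m|\asymp_{r,s}m^d|A|$, where $d=\sum w_i$ is the homogeneous dimension; in particular, $A^m$ is a $C(r,s)$-approximate group for every $m\ge 1$.

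Third, I transfer these volume estimates to $S$. Using the approximate-group property of $A$ together with $S\subset X\cdot A$, one can absorb the covering set $X$ into successive powers of $A$ to obtain $S^m\subset X'\cdot A^{c'm}$ for some $|X'|,c'=O_{K,s}(1)$, and hence $|S^m|\le O_{K,s}(m^d|S|)$. Conversely, $A\subset S^c$ gives $A^{\lfloor m/c\rfloor}\subset S^m$ and hence $|S^m|\ge\Omega_{K,s}(m^d|S|)$. Comparing these at scales $m$ and $3m$ gives $|S^{3m}|\le C(K,s)|S^m|$ for every $m\ge 1$, from which standard Pl\"unnecke-Ruzsa / Tao-type covering estimates (in the spirit of Lemma \ref{lem:sm.doub.app.gp}) imply that $S^m$ is itself a $C'(K,s)$-approximate subgroup of $G$.

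The main obstacle is Step 2, the polynomial volume-growth estimate for powers of nilprogressions. In the abelian case one has the trivial inclusion $P^m\subset mP$ and the estimate is immediate; in the genuinely nilpotent case, however, iterated commutators arising from products of $P$-elements distort the defining box, and one must show that these commutator corrections still lie in a dilated nilprogression of the same bounded rank and class. Carrying this out requires a carefully chosen normal form for nilprogressions and forms the technical heart of Section 3.
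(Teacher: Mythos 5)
Your Step 1 and the final covering step are sound, and you correctly identify that the heart of the proof is an analysis of powers of (coset) nilprogressions in Mal'cev-type coordinates — that is indeed what Section 3 of the paper establishes. But Step 2 as stated has a genuine gap. The cardinality equivalence $|A^m|\asymp_{K,s} m^d|A|$ with $d$ the homogeneous dimension is \emph{false} in general: it only holds when the nilprogression is \emph{proper}, and neither Theorem \ref{thm:bgt} nor Theorem \ref{thm:nilp} supplies a proper nilprogression. In a finite $s$-step nilpotent group the nilprogression typically saturates in one or more coordinate directions (the simplest example is $S=\{-L,\ldots,L\}$ in $\Z/N\Z$: $|S^m|$ grows linearly until it hits $N$ and then stalls). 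Consequently $|A^m|$ can grow like $m^{d'}$ for various smaller $d'$ depending on the scale, and your lower bound $|S^m|\ge\Omega_{K,s}(m^d|S|)$ fails. Since your derivation of the tripling bound $|S^{3m}|\le C|S^m|$ requires both an upper bound on $|S^{3m}|$ and a matching lower bound on $|S^m|$ with the same exponent $d$, the argument does not close. In effect, the claim that the ratio $|A^{3c'm}|/|A^{\lfloor m/c\rfloor}|$ is bounded across all scales — including the transitional scales where the progression begins to saturate — is essentially equivalent to what you are trying to prove; passing through a polynomial cardinality formula hides rather than resolves the non-properness issue.

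The paper avoids this entirely by working with \emph{set inclusions} rather than cardinalities. It introduces nilcomplete progressions $\overline{P}$ and proves three inclusions (Proposition \ref{powers}): $\overline{P}(L)^n\subset\overline{P}(nL)$, $\overline{P}(nL)\subset\overline{P}(L)^{O(n)}$, and crucially $\overline{P}(ML)\subset\overline{P}(L)X$ for a set $X$ of size $O_{r,s,M}(1)$. The last inclusion is what replaces your cardinality lower bound; it says a dilated nilcomplete progression is covered by boundedly many translates of the original, \emph{regardless of whether it is proper or has saturated in some directions}. Combining this with $S\subset H\overline{P}(L)\subset S^M$ (from Theorem \ref{thm:nilp}, with $H$ a finite normal subgroup absorbed via normality) gives $S^{2N_0 n}\subset S^{N_0 n}X_n$ directly. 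Your plan would become correct if, instead of attempting $|A^m|\asymp m^d|A|$, you proved the covering statement $\overline{P}(ML)\subset\overline{P}(L)X$; this is the technical heart of the section and is precisely what replaces the naive "counting in coordinates."
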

In the remainder of this section, we explain how to perform such a reduction using Theorem \ref{thm:bgt}. The proof of the nilpotent case will occupy the next section and rely on a study of nilprogressions.

\bigskip

\noindent \emph{Remark.} Contrary to the general case of Theorem \ref{thm:scales}, where no effective estimate is known on the size of $\theta(K)$, an effective bound in the nilpotent case can be easily computed by following our argument.

\bigskip

The reduction of Theorem \ref{thm:scales} to Theorem \ref{prop:scales.nilp} is based on the following proposition, which makes use of the structure theorem for approximate groups, Theorem \ref{thm:bgt}.

\begin{prop}\label{prop:v.nilp}
For every $K\ge1$ there exists $n_0=n_0(K)$ such that if $G$ is a group generated by a finite symmetric generating set $S$ such that
\begin{equation}\label{eq:cosets.doub}
|S^{5n}|\le K|S^n|
\end{equation}
for some $n\ge n_0$, then there are subgroups $H \leq \Gamma \leq G$ such that $H\subset S^{8n}$ is normal in $G$, such that $\Gamma/H$ is nilpotent of rank and class at most $O_K(1)$, and such that $\Gamma$ has index at most $n_0$ in $G$. Moreover, $B:=S^{8n}\cap\Gamma$  is an $O_K(1)$-approximate subgroup containing $H$, and there is a subset $X \subset S^n$ of size at most $n_0$ such that for every $b\in\N$ we have
\begin{equation}\label{eq:v.nilp}
S^{bn}\subset X B^{b-1}.
\end{equation}
\end{prop}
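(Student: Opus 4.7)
The plan is to apply the structure theorem for approximate groups (Theorem \ref{thm:bgt}) to $A:=S^{2n}$, which is a $K$-approximate subgroup of $G$ by Lemma \ref{lem:sm.doub.app.gp}. This produces a finite subgroup $H\subset A^4=S^{8n}$ and a subgroup $\Gamma$ containing $H$, with $\Gamma/H$ nilpotent of rank and class $O_K(1)$, and with $A$ contained in at most some $n_0=n_0(K)$ left cosets of $\Gamma$. Because $G=\langle S\rangle=\langle A\rangle$ and $H$ is normalised by $A$ in the structure theorem, this upgrades to $H\triangleleft G$, as required.

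The next step is to extract the index bound on $\Gamma$. Since $S^n\subset A$ lies in at most $n_0$ cosets of $\Gamma$, and $n\geq n_0$, Lemma \ref{lem:exhausts.cosets} with $k=n$ yields $G=S^n\Gamma$ and hence $[G:\Gamma]\leq n_0$. I then pick a transversal $X\subset S^n$ with $|X|\leq n_0$, one representative per coset of $\Gamma$.

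I next verify the claimed properties of $B:=S^{8n}\cap\Gamma$. The inclusion $H\subset B$ and the symmetry of $B$ are immediate. For the approximate-group property, pigeonhole provides a coset $x\Gamma$ with $x\in S^n$ meeting $S^n$ in at least $|S^n|/n_0$ points, which after translating by $x^{-1}$ gives $|B|\geq |S^n|/n_0$. On the other hand, iterating the $K$-approximate-group condition on $A$ via Pl\"unnecke--Ruzsa yields $|B^2|\leq |S^{16n}|=|A^8|\leq K^{O(1)}|A|\leq K^{O(1)}|B|$. A symmetric set with small doubling is an approximate group by the standard Ruzsa covering argument (as used in the proof of Lemma \ref{lem:sm.doub.app.gp}), so $B$ is an $O_K(1)$-approximate subgroup.

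Finally, the covering inclusion $S^{bn}\subset XB^{b-1}$ is proved by induction on $b\geq 2$. For the base case, given $s\in S^{2n}$ there is a unique $x\in X$ with $s\in x\Gamma$, so $\gamma:=x^{-1}s$ satisfies $\gamma\in S^{3n}\cap\Gamma\subset B$ and $s\in XB$. The inductive step uses $X\subset S^n$:
\[
S^{(b+1)n}=S^n\cdot S^{bn}\subset S^n\cdot XB^{b-1}\subset S^{2n}\cdot B^{b-1}\subset XB\cdot B^{b-1}=XB^b.
\]
The main technical point I anticipate is verifying that $B$ is an approximate group with constants depending only on $K$, since this requires combining the pigeonhole lower bound on $|B|$ with Pl\"unnecke--Ruzsa and the Ruzsa covering lemma; the remaining steps are bookkeeping on cosets and an easy induction.
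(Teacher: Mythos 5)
Your overall outline follows the paper's route (apply Lemma~\ref{lem:sm.doub.app.gp}, then Theorem~\ref{thm:bgt}, then Lemma~\ref{lem:exhausts.cosets}, then build the coset-representative set $X$ and run an induction), but the step where you conclude that $H$ is normal in $G$ has a genuine gap. Theorem~\ref{thm:bgt}, as stated in the paper, gives a finite subgroup $H\subset A^4$ that is normal in $\Gamma$ --- nothing more. It does \emph{not} assert that $H$ is normalised by $A$; you may be conflating it with Theorem~\ref{thm:nilp} (the nilpotent version), which does say the subgroup $H$ is normalised by $A$. Since $A$ is only contained in boundedly many cosets of $\Gamma$ and is generally not inside $\Gamma$, there is no reason for $A$ to normalise $H$, and therefore no way to upgrade $H\triangleleft\Gamma$ to $H\triangleleft G$ by this route. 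This is precisely why the paper takes a detour: it first passes from the $\Gamma_0$ supplied by Theorem~\ref{thm:bgt} to its normal core $\Gamma$ of index at most $n_1!$ (which is automatically normal in $G$), and then replaces $H_0$ by the characteristic subgroup $H:=C^s(\Gamma)$, where $s$ is the nilpotency class of $\Gamma_0/H_0$. Since $\Gamma\triangleleft G$ and $C^s(\Gamma)$ is characteristic in $\Gamma$, one genuinely gets $H\triangleleft G$; and $H\subset H_0\subset S^{8n}$ because $\Gamma H_0/H_0$ is $s$-step nilpotent as a subgroup of $\Gamma_0/H_0$. This normal-core/characteristic-subgroup manoeuvre is the essential missing ingredient in your write-up.

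A smaller point: the covering argument you invoke for $B$ is slightly off as stated. Small tripling of a symmetric set (which you can extract from Pl\"unnecke--Ruzsa here) shows, via the Ruzsa covering lemma, that $B^2$ is an $O_K(1)$-approximate group, not $B$ itself; the naive covering with $B$-translates of $B$ yields $B^2\subset XB^2$, which is one power too high. You can fix this by covering with translates of the smaller set $B':=S^{4n}\cap\Gamma$, noting $(B')^2\subset B$ and $|B^2 B'|\leq O_K(1)|B'|$ (using pigeonhole on $B'$ plus Pl\"unnecke--Ruzsa), which then gives $B^2\subset XB'(B')^{-1}\subset XB$. The paper sidesteps this entirely by citing the fact that the intersection of $F^2$ with any subgroup is an approximate group when $F$ is, applied to $F=S^{4n}$ (which is an $O_K(1)$-approximate group because $S^{2n}$ is). Your pigeonhole lower bound $|B|\geq|S^n|/n_0$ and the induction establishing $S^{bn}\subset XB^{b-1}$ are both fine.
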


\begin{proof} By Lemma \ref{lem:sm.doub.app.gp}, $S^{2n}$ is a $K$-approximate subgroup of $G$.  We may therefore apply Theorem \ref{thm:bgt} to $S^{2n}$ and conclude that there is a natural number $n_1=n_1(K)$, a subgroup $H_0 \subset S^{8n}$, and a subgroup $\Gamma_0 \leq G$ containing $H_0$ as a normal subgroup, such that $\Gamma_0/H_0$ is a nilpotent group of rank and step at most $O_K(1)$, and such that $S^{2n}$ is contained in at most $n_1$ cosets of $\Gamma_0$. If $n \geq n_1$ then Lemma \ref{lem:exhausts.cosets} implies that $\Gamma_0$ has index at most $n_1$ in $G$. In particular, $\Gamma_0$ has a normal subgroup $\Gamma$ of index at most $n_0:=n_1!$.

Since $\Gamma$ is normal in $G$, so is $C^s(\Gamma)$, the $s$-th term in its descending central series, where $s$ is the nilpotency class of $\Gamma_0/H_0$. Setting $H:=C^s(\Gamma)$, we see that $\Gamma/H$ is $s$-step nilpotent and that $H \subset H_0 \subset S^{8n}$ is normal in $G$. It is well known that if $F$ is an approximate subgroup then so is the intersection of $F^2$ with any subgroup (see \cite[Lemma 2.10]{tointon}, for example). This implies in particular that $B:=S^{8n} \cap \Gamma$ is an $O_K(1)$-approximate subgroup.

It remains to exhibit a set $X\subset S^n$ satisfying $(\ref{eq:v.nilp})$. If $n \geq n_0$ then $S^n$ intersects every coset of $\Gamma$ in $G$, and so $S^n$ contains a complete set of coset representatives of $\Gamma$; we claim that it suffices to take $X$ to be this set. Indeed, by definition this set satisfies $S^{2n} \subset X\Gamma$, and hence $S^{2n} \subset X(S^{8n} \cap \Gamma)= XB$. In particular $S^{2n} \subset S^nB$, and so $S^{bn} \subset S^{2n} B^{b-2}$ for each $b \in \N$ by induction. The result follows.
\end{proof}

The following combinatorial lemma allows us to pass from a small doubling assumption to the more comfortable $|S^{5n}|\leq K|S^n|$.

\begin{lemma}\label{doublingreduce} For every $K\geq 2$, there is $C(K)\geq 2$ such that if $S$ is a finite symmetric subset (containing $1$) in a group $G$, and if $n \geq 1$ is an  integer such that
$$|S^{2n+1}| \leq K|S^n|,$$ then $|S^{5n}| \leq C(K)|S^n|$.
\end{lemma}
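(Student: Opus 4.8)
The plan is to iterate the hypothesis $|S^{2n+1}|\le K|S^n|$ a bounded number of times to climb from scale $n$ to scale $5n$. The key observation is that the map $m\mapsto 2m+1$ grows geometrically, so after finitely many steps one overshoots $5n$; the issue is that the intermediate scales $2n+1, 2(2n+1)+1=4n+3,\ldots$ are not exact multiples of $n$, so we cannot chain the hypothesis naively (the hypothesis is only assumed at the single scale $n$, not at all scales). I would get around this by applying the hypothesis, then using monotonicity of $m\mapsto|S^m|$ together with the trivial submultiplicativity-type bound to sandwich the relevant balls between consecutive ``good'' scales.

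Concretely, first I would record that $|S^{2n+1}|\le K|S^n|$ implies $|S^{2n}|\le K|S^n|$ (since $S^{2n}\subset S^{2n+1}$), and more generally that for any $j\ge 1$, $|S^{jn}|$ is controlled: one can write $S^{jn}\subset (S^{2n})^{\lceil j/2\rceil}$ and try to telescope, but the cleanest route is to first upgrade the hypothesis to a genuine doubling statement at scale $n$ of the form $|S^{2n}|\le K|S^n|$, and then prove by induction on $k$ that $|S^{2^k n}|\le K^{?}|S^n|$ — except this fails because doubling at one scale does not give doubling at the next. So instead the right approach is: apply the hypothesis at scale $n$ to get $|S^{2n+1}|\le K|S^n|$; then I want to apply it ``again'', but the hypothesis is not assumed at scale $2n+1$.

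The resolution, which I would pursue, is to use a covering argument in the spirit of Lemma~\ref{lem:sm.doub.app.gp} / Ruzsa's covering lemma, but run purely at the level of the single scale $n$. From $|S^{2n+1}|\le K|S^n|$ one deduces (exactly as in the proof of Lemma~\ref{lem:sm.doub.app.gp}, replacing $5$ by $2$ in the appropriate places, or via the Plünnecke--Ruzsa inequalities for the symmetric set $S^n$: since $|S^n\cdot S^n\cdot S|\le |S^{2n+1}|\le K|S^n|$, a Ruzsa-type estimate bounds $|(S^n)^j|$ polynomially in $K$ times $|S^n|$) that there is a subset $X$ with $|X|\le K^{O(1)}$ such that $S^{3n}\subset X S^n$, hence by induction $S^{jn}\subset X^{j-2}S^{2n}$, so $|S^{jn}|\le |X|^{j-2}\cdot K|S^n|$. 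Taking $j=5$ gives $|S^{5n}|\le K^{O(1)}|S^n|$, and we set $C(K):=K^{O(1)}$, which we may clearly take $\ge 2$. The main obstacle is getting the covering/Plünnecke step to work cleanly from only the hypothesis $|S^{2n+1}|\le K|S^n|$ (rather than the stronger $|S^{5n}|\le K|S^n|$): one must be slightly careful that $S^n$ is symmetric and contains the identity (it is, since $S$ is), so that Ruzsa's triangle inequality and the covering lemma apply with $A=S^n$; the numerology ($2n+1$ versus $2n$, and how many iterations land in $[5n,\infty)$) is routine bookkeeping once the covering step is in place.

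Finally I would double-check the constant is uniform: the exponent $O(1)$ in $K^{O(1)}$ comes only from a fixed number of applications of Ruzsa's inequality (enough to reach scale $5n$, i.e. an absolute number of steps), so $C(K)$ depends on $K$ alone and not on $G$, $S$, or $n$, as required. An alternative, perhaps more elementary, route avoiding Plünnecke is to iterate directly: from $S^{2n}\subset X S^n$ with $|X|\le K$ (Ruzsa covering at scale $n$, using $|S^{2n}|\le|S^{2n+1}|\le K|S^n|$ and maximality of a set of disjoint translates $xS^n$ inside $S^{2n}$... but $XS^n\subset S^{3n}$ forces controlling $|S^{3n}|$, so one really does need to climb), one gets $S^{kn}\subset X^{k-1}S^n$ only after knowing the cover exists, which again needs a bound one level up. Weighing these, the Plünnecke--Ruzsa route is the most robust, and that is the one I would write up.
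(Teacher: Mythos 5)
The central step of your argument --- deducing from $|S^{2n+1}|\le K|S^n|$ alone, ``via the Pl\"unnecke--Ruzsa inequalities for the symmetric set $S^n$'', that $|(S^n)^j|\le K^{O(1)}|S^n|$, or equivalently that $S^{3n}\subset XS^n$ with $|X|\le K^{O(1)}$ --- is precisely the content of the lemma, and it does not follow from any standard Pl\"unnecke--Ruzsa-type black box. In a non-abelian group your hypothesis on $A=S^n$ is only a doubling-type condition ($|A^2S|\le K|A|$), and doubling does not control tripling: for $A=H\cup\{x,x^{-1}\}$ with $H$ a finite subgroup not normalised by $x$ one has $|A^2|=O(|A|)$ while $A^3\supset HxH$ can have size comparable to $|H|^2$. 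The non-commutative product-set machinery (e.g.\ \cite[Lemma 3.4]{tao.product.set}, which is also what the paper uses) takes small \emph{tripling} $|A^3|\le K|A|$ as input; it cannot be launched from the hypothesis here, and Ruzsa's triangle inequality only bounds short products by longer ones, which goes the wrong way. So any proof must exploit the special structure of $A=S^n$ as a ball of a symmetric generating set, which your write-up never does beyond noting symmetry. Your fallback covering route is, as you yourself observe at the end, circular: to cover $S^{2n}$ or $S^{3n}$ by boundedly many translates of $S^n$ via Ruzsa covering one already needs an upper bound on a ball of radius strictly larger than $2n+1$, which is what is to be proved. Declaring the Pl\"unnecke--Ruzsa route ``the most robust'' therefore leaves the key step unproved.

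The paper supplies exactly the missing ingredient via Petridis's lemma \cite{petridis}: applied with $B=S^{n+1}$ and $A=S^n$, it yields a subset $A_0\subset S^n$ such that $|S^{n+1}A_0X|\le K|A_0X|$ for \emph{every} finite $X$, from which one extracts $|S^{2n+1}|\le K^2|A_0|$, $|A_0SA_0|\le K|A_0|$ and $|A_0^3|\le K^2|A_0|$. Consequently $A_1:=S\cup A_0$ has small tripling ($|A_1^3|\le 8K^2|A_1|$), and only now does small tripling imply small $k$-pling for $(A_1\cup A_1^{-1})^k$. A final chaining argument --- choosing points $x_i\in S^n$ whose translates $x_iA_1$ are disjoint inside $S^{2n}$, of which there are at most $K^2$ --- shows $S^n\subset (A_1\cup A_1^{-1})^{O_K(1)}$ and hence $|S^{5n}|\le O_K(1)|S^n|$. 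If you wish to repair your proposal, this Petridis step (or some substitute that manufactures a genuinely small-tripling set from the given one-scale hypothesis) is the idea that has to be added; the rest of your bookkeeping is then fine.
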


\begin{proof} We apply Petridis's lemma \cite{petridis} (see also \cite{ruzsa.plu}), according to which if $A$ and $B$ are finite subsets of a group $G$ such that $|BA| \leq K|A|$, then there is a subset $A_0 \subset A$ that $|BA_0X| \leq K |A_0X|$ for every finite subset $X$ in $G$. Applying this lemma with $B=S^{n+1}$ and $A=S^n$ gives a subset $A_0$ of $S^n$ with the property that $$|S^{n+1}A_0X| \leq K|A_0X|$$ for all $X \subset G$. In particular, this implies that $|S^{n+1}| \leq |S^{n+1}A_0| \leq K|A_0|$, and hence that $|S^{2n+1}| \leq K|S^n| \leq K^2 |A_0|$. This also shows that $|A_0SA_0| \leq K|A_0|$, since $A_0S \subset S^{n+1}$. Finally, taking $X=A_0$ and noting that $A_0 \subset S^{n} \subset S^{n+1}$, we see that $|A_0^3| \leq K|A_0^2|\leq K^2 |A_0|$.

Now set $A_1:=S \cup A_0$. Note that $A_1^3$ is contained in the union of the subsets $A_0^3$, $S^3$, $A_0SA_0$, $S^2A_0$, $A_0S^2$, $SA_0S$, $A_0^2S$, $SA_0^2$, of which all but $A_0^3$ are contained in $S^{2n+1}$. Since both $A_0^3$ and $S^{2n+1}$ are of size at most $K^2 |A_0|$, this implies that $|A_1^3| \leq 8K^2|A_1|$, and so $A_1$ has small tripling. It is well known that small tripling implies small $k$-pling for every integer $k$; more precisely, \cite[Lemma 3.4]{tao.product.set}, for example, implies that $|(A_1 \cup A_1^{-1})^k| \leq O_{K,k}(1)|A_1|$.

We claim that there exists $k=O_K(1)$ such that $S^n$ is contained in $(A_1 \cup A_1^{-1})^k$. This will finish the proof of the lemma, since it implies that $S^{5n} \subset (A_1 \cup A_1^{-1})^{5k}$, and hence that $|S^{5n}| \leq O_{K,k}(1) |A_1| \leq O_K(1)|S^n|$.

We build recursively sequences $x_i \in S^n$ and $k_i\in\Z$ for $i \geq 0$. Take $x_0$ to be the identity, and $k_0$ to be zero. Then, assuming $x_0,...,x_i$ and $k_1,\ldots,k_i$ have been chosen, let $k_{i+1}$ be the smallest integer such that there is $y \in S^{k_{i+1}}$ with the property that $yA_1$ is disjoint from all $x_jA_1$ for $j=0,\ldots,i$, and let $x_{i+1}$ be such an element $y$.

Let $m$ be the largest integer such that $k_m \leq n$ but $k_{m+1} > n$. Since the $x_iA_1$ are all disjoint and belong to $S^nA_1 \subset S^{2n}$, there can be at most $|S^{2n}|/|A_1| \leq K^2$ of them, and so $m \leq K^2$.

On the other hand, we claim that $x_i \in (A_1 \cup A_1^{-1})^{3i}$. We verify this by induction. Assuming this holds for $i$, we write $x_{i+1}=su$, where $u \in S^{k_{i+1}-1}$ and $s \in S$. By construction, $uA_1$ must intersect some $x_jA_1$ non-trivially for some $j \leq i$. This means that $u \in x_jA_1A_1^{-1} \subset (A_1 \cup A_1^{-1})^{3i+2}$. However, $S \subset A_1$, and hence $x_{i+1} \in  (A_1 \cup A_1^{-1})^{3i+3}$, as desired.

If $x \in S^n$ is arbitrary then, by the definition of $x_m$, there must be some $j\leq m$ such that $xA_1$ intersects $x_jA_1$ non-trivially. This means that $x \in x_jA_1A_1^{-1}$, and hence that $x \in (A_1 \cup A_1^{-1})^{3m+2}$.  We conclude that $S^n \subset (A_1 \cup A_1^{-1})^{3m+2}$. The claim, and hence the lemma, now follows, since $m\le K^2$.
\end{proof}

%We used the well-known:
%
%\begin{lemma}\label{tripling} If $A$ is a finite subset of a group $G$ such that $|A^3| \leq K|A|$, then $(A \cup A^{-1})^2$ is an $O(K^{O(1)})$-approximate subgroup of $G$.
%\end{lemma}
%
%\begin{proof}Indeed let $S:=A \cup A^{-1}$, then $|S^5| \leq O(K^{O(1)})|S|$ by \cite[Lemma 3.4]{tao-noncommutative}, and the conclusion follows from Lemma \ref{lem:sm.doub.app.gp}.
%\end{proof}
%%
%%
%We are now ready for the

\begin{lemma}\label{lem:pullback}
Let $\pi:G\to G'$ be a homomorphism with kernel $N$, and suppose that $A$ is a symmetric subset of $G$ such that $N\subset A$ and such that $\pi(A)$ is a $K$-approximate group. Then $A^2$ is a $K^3$-approximate group.
\end{lemma}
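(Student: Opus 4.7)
The plan is to lift a covering of $\pi(A)^4$ by translates of $\pi(A)$ to a covering of $A^4$ by translates of $A^2$, using the hypothesis $N \subset A$ to absorb kernel translations into one extra factor of $A$.

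First, I would iterate the defining inclusion of $\pi(A)$ as a $K$-approximate group. Writing $\pi(A)^2 \subset X'\pi(A)$ with $|X'|\le K$, a straightforward induction on $n$ gives $\pi(A)^n \subset (X')^{n-1}\pi(A)$, since $\pi(A)^{n} = \pi(A)^{n-1}\pi(A) \subset (X')^{n-2}\pi(A)\cdot \pi(A) \subset (X')^{n-1}\pi(A)$. In particular $\pi(A)^4 \subset Y'\pi(A)$ where $Y' := (X')^3$ satisfies $|Y'|\le K^3$.

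Next, I would lift $Y'$ by choosing one $\pi$-preimage of each of its elements, obtaining $Y \subset G$ with $\pi(Y)=Y'$ and $|Y|\le K^3$. Given $a \in A^4$, the inclusion $\pi(A)^4 \subset Y'\pi(A)$ yields $y \in Y$ and $a' \in A$ with $\pi(a)=\pi(ya')$, hence $a \in ya'N \subset YA\cdot N$. Since $N \subset A$, this gives $a \in YA\cdot A = YA^2$, so $A^4 \subset YA^2$.

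Finally, $A^2$ is symmetric because $A$ is, and contains the identity because $1 \in N \subset A$. Together with $A^4 \subset YA^2$ and $|Y|\le K^3$, this shows $A^2$ is a $K^3$-approximate group. There is no substantial obstacle here; the one conceptual point is that the hypothesis $N \subset A$ converts a mod-$N$ covering into a genuine covering at the cost of one extra factor of $A$, which is exactly why the conclusion is phrased for $A^2$ rather than $A$.
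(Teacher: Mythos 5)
Your proof is correct and takes essentially the same approach as the paper: iterate the covering condition downstairs to get $\pi(A)^4\subset (X')^3\pi(A)$, lift the translating set, and use $N\subset A$ to absorb the kernel into one extra factor of $A$. The paper compresses this to two lines but the mechanism is identical.
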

\begin{proof}
To say that $\pi(A)$ is a $K$-approximate group is to say that there exists $X\subset G$ with $|X|\le K$ such that $A^2\subset XAN$. Iterating this, we conclude that $A^4\subset X^3AN\subset X^3A^2$.
\end{proof}

We can now complete the reduction of Theorem \ref{thm:scales} to the nilpotent case.
\begin{proof}[Proof of Theorem \ref{thm:scales} modulo Theorem \ref{prop:scales.nilp}] Applying Lemma \ref{doublingreduce}, we may assume that $|S^{5n}|\leq K|S^n|$. Proposition \ref{prop:v.nilp} therefore gives a finite-index subgroup $\Gamma$ of $G$ and a normal subgroup $H$ contained in $S^{8n}$ such that $\Gamma/H$ is nilpotent with bounded rank and nilpotency class, such that  $S^{bn} \subset XB^b$ for every $b\geq 1$, where $|X|\leq n_0(K)$ and $B:=S^{8n}\cap \Gamma$ is an $O_K(1)$-approximate subgroup of $\Gamma$ containing $H$. Applying Theorem \ref{prop:scales.nilp} to the nilpotent group $\Gamma/H$, and pulling back to $\Gamma$ using Lemma \ref{lem:pullback}, we conclude that for all $b\in \N$ the set $B^{2b}$ is an $O_K(1)$-approximate group. In particular, $S^{32bn} \subset XB^{32b} \subset XY_b B^{2b}$, for some subset $Y_b$ of size at most $O_K(1)$. Since $B^{2b} \subset S^{16bn}$, we conclude that $S^{16bn}$ is an $O_K(1)$-approximate subgroup, uniformly for all $b \in \N$. Write $X_{16bn}$ for the set $XY_b$ witnessing this.

Given $m\ge16n$, let $m'$ be the smallest integer multiple of $32n$ that is at least $m$, and note that $m\ge m'/2$, and hence that $|S^{m'}|\le O_K(1)|S^{m'/2}|\le O_K(1)|S^m|$. We also have
\[
S^{cm}\subset S^{cm'}\subset X_{m'}^{c-1}S^{m'},
\]
from which the theorem then follows easily for the value of $m$ in question. For $m<16n$, on the other hand, the theorem follows from Lemma \ref{lem:sm.doub.app.gp}.
\end{proof}
%
%
%Observe that it is enough to prove the theorem for $c=5$. Indeed if $|S^{5m}| \leq \theta_0(K) |S^m|$ for some constant $\theta_0(K)>0$ and all $m\geq n_0(K)$, then by Lemma \ref{lem:sm.doub.app.gp} $S^{2m}$ is a $\theta_0(K)$-approximate subgroup. In particular, there is a subset $X \subset G$ such that $|X| \leq \theta_0(K)$, and $S^{4m} \subset X S^{2m}$. Iterating we get that $S^{2km} \subset X^{k-1} S^{2m}$ for every positive integer $k$, and the result follows.

%
%

\section{Nilprogressions}\label{sec:coms.nilprogs}
Our goal in this section is to prove Theorem \ref{prop:scales.nilp}. This will conclude the proof of Theorem \ref{thm:scales}. The proof relies on the structure theorem for nilpotent approximate groups due to the second author \cite{tointon}, which we recall below as Theorem \ref{thm:nilp}, as well as an analysis of nilprogressions and their powers, which forms the basis of Proposition \ref{powers}, below. 

Nilprogressions are to nilpotent groups what arithmetic progressions are to the infinite cyclic group $\Z$. There are several natural  ways to define this concept. Although they do not lead to exactly the same notions, they are roughly equivalent on some level, as we will see. We start with the following definition.

\begin{definition}[Nilprogression, see \cite{bgt}]\label{def:nilprog}
Let $G$ be a group, let $x_1,\ldots,x_r\in G$ and let $L=(L_1,\ldots,L_r)$ be a vector of non-negative integers. If the $x_1,\ldots,x_r$ generate an $s$-step nilpotent subgroup of $G$ then the set of all products in the $x_i$ and their inverses, in which each $x_i$ and its inverse appear at most $L_i$ times between them, is said to be a \emph{nilprogression} of rank $r$ and step $s$, and is denoted $P^*(x_1,\ldots,x_r;L)$.
\end{definition}

For the purposes of this paper we need to introduce a new variant of a nilprogression, which we call a \emph{nilcomplete progression}. This is closely related to yet another variant, introduced by Green and the first author in \cite{tor.free.nilp}, where it is called a \emph{nilpotent progression}.  We show below precisely how they may be thought of as `roughly equivalent'; see \eqref{eq:nested.progs} in particular. For further background on this rough equivalence the reader may consult the work \cite{tointon} of the second author.

In order to define nilcomplete progressions, we need to establish some terminology. The following definition is reproduced from \cite{tointon}, and follows a set up in \cite[\S1]{tor.free.nilp} that was in turn based on \cite[\S11.1]{hall}.

\begin{definition}[Commutators and weights]
We define \emph{(formal) commutators} in the letters $x_1,\ldots,x_r$ recursively by defining each $x_i$ to be a formal commutator and, given two formal commutators $\alpha,\alpha'$ in the $x_j$, defining $[\alpha,\alpha']$ also to be a formal commutator.

To each commutator $\alpha$ we assign a \emph{weight vector} $\chi(\alpha)\in\N_0^r$, defined recursively by setting $\chi(x_i):=e_i$ and, given two formal commutators $\alpha,\alpha'$ in the $x_j$, defining $\chi([\alpha,\alpha'])=\chi(\alpha)+\chi(\alpha')$. We define the \emph{total weight} $|\chi(\alpha)|$ of a commutator $\alpha$ to be $\|\chi(\alpha)\|_1$. Given a weight vector $\chi\in\N_0^r$ and a vector $L=(L_1,\ldots,L_r)$ of positive integers we write $L^\chi$ to denote the quantity 
$$L^\chi:=L_1^{\chi_1}\cdots L_r^{\chi_r}.$$

Noting that this results in at most finitely many commutators of any given weight vector, we assign a fixed total ordering $\prec$ to the commutators, chosen arbitrarily subject to the conditions that $x_1\prec\ldots\prec x_r$, that commutators of the same weight vector are consecutive, and that commutators of lower total weight come before commutators of higher total weight.

Finally, for each commutator $\alpha$ we define the \emph{(formal) inverse commutator} $\alpha^{-1}$.
We extend $\prec$ to a partial ordering of commutators and their formal inverses, defining $\alpha^{\pm1}\prec\beta^{\pm1}$ when $\alpha\prec\beta$.
\end{definition}

Certain commutators in the $x_1,\ldots,x_r$ are called \emph{basic commutators}. These are defined precisely in each of \cite{tor.free.nilp,hall,tointon}; for the purposes of this paper it will suffice to note that if $G$ is the free $s$-step nilpotent group on generators $x_1,\ldots,x_r$, and $c_1,\ldots,c_{t'}$ is the ordered (with respect to $\prec$) list of basic commutators of weight at most $s$ in the $x_i$, then the elements
\[
c_1^{l_1}\cdots c_{t'}^{l_{t'}}
\]
are all distinct as the $l_i$ range over $\Z$ and, moreover, every element of $G$ can be expressed in this form.

\begin{definition}[Nilpotent progression, see \cite{tor.free.nilp}]
Let $G$ be a group, let $x_1,\ldots,x_r$ be elements of $G$ and let $L=(L_1,\ldots,L_r)$ be a vector of non-negative integers. If $x_1,\ldots,x_r$ generate an $s$-step nilpotent subgroup of $G$ then, writing $c_1,\ldots,c_{t'}$ for the ordered (with respect to $\prec$) list of basic commutators of weight at most $s$ in the $x_i$, the set
\[
P(x_1,\ldots,x_r;L):=\{c_1^{l_1}\cdots c_{t'}^{l_{t'}}:|l_i|\le L^{\chi(c_i)}\}
\]
is said to be a \emph{nilpotent progression} of rank $r$ and step $s$. If the elements $c_1^{l_1}\cdots c_{t'}^{l_{t'}}$ are all distinct as the $l_i$ range over $[-L^{\chi(c_i)},L^{\chi(c_i)}]$ then the nilpotent progression $P(x_1,\ldots,x_r;L)$ is said to be \emph{proper}.
\end{definition}

\begin{remark}The nilpotent progression $P(x_1,\ldots,x_r;L)$ is proper, for example, in the free $s$-step nilpotent group generated by $x_1,\ldots,x_r$.
\end{remark}
\begin{remark}\label{rem:proper}
The cardinality of a nilpotent progression $P(x_1,\ldots,x_r;L)$ is easily seen to be at most $\prod_{i=1}^t(2L^{\chi(c_i)}+1)$, with equality if and only if $P(x_1,\ldots,x_r;L)$ is proper. In the case that $P(x_1,\ldots,x_r;L)$ is proper, this implies in particular that $|P(x_1,\ldots,x_r;ML)|\le M^{O_{r,s}(1)}|P(x_1,\ldots,x_r;L)|$ for every $M\in\N$.
\end{remark}
We now move on to the definition of a nilcomplete progression.
The reason for introducing nilcomplete progressions is that they behave particularly well on taking powers or on scaling the side lengths $L_i$ (see Proposition \ref{powers} below).
The definition is similar to that of a nilpotent progression, except that we use the list of \emph{all} commutators in the $x_i$ instead of just the basic commutators, and that when taking commutators we allow lower-weight components to be replaced by their inverses (so, for example, we allow commutators such as $[x_1,x_2^{-1}]$ and $[x_3,[x_1^{-1},x_2]^{-1}]$). We express this more formally as follows.
\begin{definition}[Generalised commutators]
We define the \emph{generalised commutators} in the $x_i$ in exactly the same way as the commutators, except that if $\alpha_1$ and $\alpha_2$ are two generalised commutators in the $x_i$ then so is $[\alpha_1^{\epsilon_1},\alpha_2^{\epsilon_2}]$ for every $\epsilon_1=\pm1$ and $\epsilon_2=\pm1$.

We extend the definition of weight vectors from commutators to generalised commutators by requiring that the weight vectors of $[\alpha_1^{\epsilon_1},\alpha_2^{\epsilon_2}]$ are equal for all choices of $\epsilon_1$ and $\epsilon_2$. We extend the order $\prec$ arbitrarily so that the generalised commutators $[\alpha_1^{\epsilon_1},\alpha_2^{\epsilon_2}]$ with different choices of $\epsilon_1$ and $\epsilon_2$ are consecutive.
\end{definition}
Throughout this section we write $\xi_1,\ldots,\xi_t$ for the ordered (with respect to $\prec$) list of generalised commutators of total weight at most $s$ in $x_1,\ldots,x_r$. Note that $t$ is bounded in terms of $r$ and $s$ only.

\begin{definition}[Nilcomplete progression]
Let $G$ be a group, let $x_1,\ldots,x_r$ be elements of $G$, and let $L=(L_1,\ldots,L_r)$ be a vector of non-negative integers. If $x_1,\ldots,x_r$ generate an $s$-step nilpotent subgroup of $G$ then, writing $\xi_1,\ldots,\xi_t$ for the ordered (with respect to $\prec$) list of generalised commutators of weight at most $s$ in the $x_i$, the set
\[
\overline P(x_1,\ldots,x_r;L)=\{\xi_1^{l_1}\xi_2^{l_2}\cdots \xi_t^{l_t}:|l_i|\le L^{\chi(\xi_i)}\}
\]
is said to be a \emph{nilcomplete progression} of rank $r$ and step $s$.
\end{definition}

We recall a technical notion that was useful in \cite{tointon}.
\begin{definition}[Ordered progression]Let $x_1,\ldots,x_r$ be elements of a group and let $L=(L_1,\ldots,L_r)$ be a vector of non-negative integers. Then the set
\[
P_\textup{ord}(x_1,\ldots,x_r;L):=\{x_1^{l_1}\cdots x_r^{l_r}:|l_i|\le L_i\}
\]
is said to be an \emph{ordered progression} of rank $r$.
\end{definition}

It is immediate from the definitions that
\begin{equation}\label{eq:closed.as.ord}
\overline P(x_1,\ldots,x_r;L)=P_\textup{ord}(\xi_1,\ldots,\xi_t;L^{\chi(\xi_1)},\ldots,L^{\chi(\xi_t)}).
\end{equation}

It was shown in \cite{tointon} that nilprogressions, nilpotent progressions and nilcomplete progressions are `essentially' equivalent, and that nilcomplete progressions control nilpotent approximate groups. In particular, we have
\begin{equation}\label{eq:nested.progs}
P_\textup{ord}(x_1,\ldots,x_r;L)\subset P^*(x_1,\ldots,x_r;L)\subset P(x_1,\ldots,x_r;L)\subset\overline P(x_1,\ldots,x_r;L)\subset P_\textup{ord}(x_1,\ldots,x_r;L)^{r^{O_s(1)}}.
\end{equation}
Here the first and third inclusions are trivial, while the second follows from the commutator collecting process, as described in \cite{tointon}. The last inclusion is proven in \cite[Proposition C.1]{tointon} for $P$ in place of $\overline P$, but precisely the same argument works and yields \eqref{eq:nested.progs}.

\bigskip

Next we recall the main result of \cite{tointon}.

\begin{theorem}[Structure of nilpotent approximate groups, \cite{tointon}]\label{thm:nilp}
Let $G$ be an $s$-step nilpotent group, and suppose that $A$ is a $K$-approximate subgroup of $G$. Then there is a subgroup $H$ normalised by $A$, a natural number $r\le K^{O_s(1)}$, elements $x_1,\ldots,x_r\in G$, and a vector $L=(L_1,\ldots,L_r)$ of non-negative integers such that
\[
A\subset HP^*(x_1,\ldots,x_r;L)\subset HP(x_1,\ldots,x_r;L)\subset H\overline P(x_1,\ldots,x_r;L)\subset A^{K^{O_s(1)}}.
\]
\end{theorem}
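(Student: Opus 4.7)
The plan is to proceed by induction on the nilpotency class $s$, reducing in each step to the approximate-group theory one level down in the descending central series.

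For the base case $s=1$, the ambient group is abelian, and the conclusion is essentially the Green--Ruzsa theorem, which provides a coset progression $H+P$ containing $A$ with $H+P$ in turn contained in a bounded power of $A$. This directly matches the nilprogression format when the rank is interpreted as the rank of $P$, and the chain of inclusions in the statement collapses (with the standard $\overline P = P = P^*$ identification in the abelian setting).

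For the inductive step, I would let $Z:=\gamma_s(G)$ (the last nontrivial term of the descending central series), which is central and abelian. The projection $\pi: G \to G/Z$ sends $A$ to an $O(K)$-approximate subgroup of an $(s-1)$-step nilpotent group, so by induction there exist a normal subgroup $\bar{H}\le G/Z$, generators $\bar{x}_1,\ldots,\bar{x}_{r'}$ and side lengths $L'$ with
\[
\pi(A) \;\subset\; \bar{H}\,\overline P(\bar{x}_1,\ldots,\bar{x}_{r'};L') \;\subset\; \pi(A)^{K^{O_s(1)}}.
\]
Lift each $\bar{x}_i$ to some $x_i\in A^{K^{O_s(1)}}$ using the fact that $A$ is an approximate group (so that intersections of bounded powers of $A$ with cosets are nonempty). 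Simultaneously, the abelian approximate group $A^{m}\cap Z$ (for a suitable $m=K^{O_s(1)}$) falls under the base case, yielding additional generators $x_{r'+1},\ldots,x_r\in Z$ and a subgroup $H_0\le Z$ with the corresponding inclusions inside $Z$. Taking $H:=\bar{H}\cdot H_0$ (properly interpreted), the union of the two generator systems and the concatenated side-length vector should form the required nilprogression data.

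The substantive work lies in the lifted commutator analysis. Using the commutator collecting process (essentially the Hall--Petresco identities), any word in the lifted $x_i$'s can be reordered into a product $\xi_1^{l_1}\cdots\xi_t^{l_t}$ over all generalised commutators of weight $\le s$, at the cost of introducing higher-weight commutators whose exponents are multilinearly controlled by the number of times each letter appears. This is exactly the reason for the weight normalisation $|l_i|\le L^{\chi(\xi_i)}$ in the nilcomplete progression. Combined with the inductive bound on $\pi(A)$ and the base-case bound on $A\cap Z$, this yields
\[
A \;\subset\; H\,\overline P(x_1,\ldots,x_r;L),
\]
with $L$ obtained by scaling up the inductive $L'$ by a factor of $K^{O_s(1)}$, while also showing $\overline P\subset A^{K^{O_s(1)}}$. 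The inclusions $P^*\subset P\subset\overline P$ are then just \eqref{eq:nested.progs}.

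The main obstacle is the bookkeeping in the commutator step: one must verify that the side lengths for weight-$s$ generalised commutators produced by the collecting process are genuinely at most $L^{\chi(\xi_i)}$ (i.e.\ the natural multilinear bound) and not something worse, while simultaneously ensuring that the weight-$s$ generators one adds in $Z$ are compatible with the $\prec$ ordering used to define $\overline P$. This requires carefully matching the inductive hypothesis, which controls only ``length in $G/Z$'', with the independent control over $A\cap Z$ supplied by the abelian case, and absorbing the inevitable polynomial losses into the $K^{O_s(1)}$ exponent throughout.
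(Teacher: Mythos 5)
The paper does not actually prove Theorem \ref{thm:nilp}: the statement is quoted from \cite{tointon}, and the only thing added here is the observation that Tointon's proof, stated there for the nilpotent progression $P$, gives the version with the nilcomplete progression $\overline P$ by the identical argument. So there is no ``paper's own proof'' to compare against; one can only assess your sketch on its own terms and against the cited source.

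The broad strategy you outline --- induction on the nilpotency class $s$, Green--Ruzsa in the abelian base case, lifting generators and running the commutator collecting process --- is indeed the skeleton of the argument in \cite{tointon} (which in turn adapts the torsion-free case of \cite{tor.free.nilp}). However, you identify the collecting-process bookkeeping as the main obstacle, when in fact the genuinely hard point is the one you dismiss in a parenthetical: the construction of the subgroup $H$. You propose $H := \bar{H}\cdot H_0$ ``(properly interpreted)'', but $\bar{H}$ is a subgroup of the quotient $G/Z$, not of $G$, and there is no canonical lift of it to a finite subgroup of $G$ that is simultaneously normalised by $A$ and contained in $A^{K^{O_s(1)}}$. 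The full preimage $\pi^{-1}(\bar{H})$ contains $Z$ entirely and is usually far too large; choosing arbitrary lifts of a generating set of $\bar{H}$ inside $A^{K^{O_s(1)}}$ does not produce a group, and the group those lifts generate may again escape $A^{K^{O_s(1)}}$. Producing a suitable $H$ is precisely what distinguishes the general nilpotent case handled in \cite{tointon} from the torsion-free setting, and it is not resolved by the multilinear bookkeeping in the collecting step. As it stands, the inductive step does not close.
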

Again, this statement was proven in \cite{tointon} only for $P$, and not for $\overline{P}$, but precisely the same proof yields the result for $\overline{P}$ as well.

We are now ready to state the main technical result of this section, which is a description of the properties of nilcomplete progressions and their powers.

\begin{prop}[Powers of nilcomplete progressions]\label{powers}Let $G$ be an $s$-step nilpotent group, let $x_1,\ldots,x_r$ be elements of $G$, and let $L=(L_1,\ldots,L_r)$ be a vector of positive integers. Then the following conditions hold.
\begin{enumerate}
\item \label{prop:dilates.1} $\overline P(x_1,\ldots,x_r;nL)\subset\overline P(x_1,\ldots,x_r;L)^{O_{r,s}(n)}$.
\item \label{prop:dilates.2} $\overline P(x_1,\ldots,x_r;L)^n\subset\overline P(x_1,\ldots,x_r;nL)$.
\item \label{prop:translates} For every $M\in\N$ there is some set $X\subset G$ of cardinality at most $O_{r,s,M}(1)$ such that
\[
\overline P(x_1,\ldots,x_r;ML)\subset\overline P(x_1,\ldots,x_r;L)X.
\]
\end{enumerate}

\end{prop}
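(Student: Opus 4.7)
The plan is to leverage the algebraic structure given by Hall's commutator collecting process in the $s$-step nilpotent group, treating the three parts as an interlocking package: (2) is the direct consequence of collecting, (1) is its quantitative reverse obtained via BCH-type identities, and (3) follows from an inductive covering argument building on (1)--(2).

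For part (2), I would expand an element of $\overline P(x_1,\ldots,x_r;L)^n$ as the concatenation of $n$ canonical words $\xi_1^{l^{(k)}_1}\cdots\xi_t^{l^{(k)}_t}$ with $|l^{(k)}_i|\le L^{\chi(\xi_i)}$, and collect the result into a single canonical form $\xi_1^{m_1}\cdots\xi_t^{m_t}$ via iterated adjacent swaps $\xi_j^b\xi_i^a=\xi_i^a\xi_j^b[\xi_j^b,\xi_i^a]$, each commutator itself being a generalised commutator of strictly higher weight. Careful accounting of the contributions, in the spirit of the Hall--Petresco formula, shows that $|m_i|\le n^{|\chi(\xi_i)|}L^{\chi(\xi_i)}=(nL)^{\chi(\xi_i)}$, which is exactly the defining inequality for membership in $\overline P(\cdot;nL)$.

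Part (1) is the heart of the argument and is where the main obstacle lies: the naive decomposition of $\xi_j^{l_j}$ with $|l_j|\le n^{|\chi(\xi_j)|}L^{\chi(\xi_j)}$ as a product of copies of $\xi_j^{\pm L^{\chi(\xi_j)}}\in\overline P(\cdot;L)$ uses $n^{|\chi(\xi_j)|}$ factors and so only yields an $O_{r,s}(n^s)$ bound rather than $O_{r,s}(n)$. The crucial remedy is the BCH-type identity
\[
(x_i^{L_i}x_j^{L_j})^n=x_i^{nL_i}x_j^{nL_j}[x_j,x_i]^{\binom{n}{2}L_iL_j}
\]
(modulo higher-weight corrections in the $2$-step case), which realises a power of order $\Theta(n^2L_iL_j)$ of a weight-$2$ commutator using only $2n$ factors from $\overline P(\cdot;L)$. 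Iterating this identity across weights produces weight-$k$ commutators of exponent $\Theta(n^k L^\chi)$ from $O(n)$ factors. I would formalise this by induction on the class $s$: modulo the central subgroup $C^s(G)$ the statement reduces to the $(s-1)$-step case, and the central weight-$s$ correction terms needed to lift back to $G$ are built up by applying the BCH identity to suitably chosen products of lower-weight generalised commutators that already lie in $\overline P(\cdot;L)$.

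Part (3) proceeds by induction on the class $s$. Passing to $G/C^s(G)$, the inductive hypothesis covers the image of $\overline P(\cdot;ML)$ by $O_{r,s-1,M}(1)$ right-translates of the image of $\overline P(\cdot;L)$; lifting, every element of $\overline P(\cdot;ML)$ has the form $p\cdot x'\cdot z$ with $p\in\overline P(\cdot;L)$, $x'$ a lift drawn from a fixed finite set, and $z\in C^s(G)$. Using part (2) to bound the weight-$s$ exponents that can appear in $z$, the possible values of $z$ form an abelian progression in $C^s(G)$ whose side lengths exceed those of $\overline P(\cdot;L)\cap C^s(G)$ by at most a factor $M^{O_{r,s}(1)}$, so they can be covered by $M^{O_{r,s}(1)}$ translates of the latter. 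Combining these translates with the finite set of lifts, and exploiting the centrality of $C^s(G)$ to move the central factors freely past the $x'$, produces the required set $X$ of cardinality $O_{r,s,M}(1)$.
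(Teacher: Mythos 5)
Your outline is essentially correct and, for parts (1) and (2), runs along the same lines as the paper: part (2) is proved by the single-letter collecting process together with a careful count of how many instances of each generalised commutator are created (the paper organises this via ``prime'' and ``composite'' instances and an induction on the weight), and part (1) rests on exactly the observation you make -- that a single factor $\xi_i^{\ell}$ with $|\ell|\le(nL)^{\chi(\xi_i)}$ must be realised using $O_{r,s}(n)$ rather than $O_{r,s}(n^{|\chi(\xi_i)|})$ letters of $\overline P(\cdot;L)$, which is precisely what the cited \cite[Lemma~C.2]{tointon} encapsulates; you are in effect re-deriving that lemma from the BCH/Hall--Petresco mechanism. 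One small caution on (2): when you write $\xi_j^b\xi_i^a=\xi_i^a\xi_j^b[\xi_j^b,\xi_i^a]$ and assert each new commutator is a generalised commutator, that is only literally true for single-letter swaps ($a=b=1$, the form $vu=uv[v,u]$ used in the paper); for block swaps $[\xi_j^b,\xi_i^a]$ must first be expanded, which reintroduces the very bookkeeping you are trying to control. Where you genuinely diverge from the paper is part (3). The paper reduces (3) to the statement for ordered progressions over a nilcomplete set $y_1,\ldots,y_k$ (Proposition~\ref{prop:translates'}) and inducts on the number $k$ of generators, peeling off the leading generator $y_1$ using a commutation lemma (Lemma~\ref{lem:comm.p*}, moving a power of $y_1$ past the rest at the cost of enlarging the remaining side lengths) together with Lemma~\ref{lem:split.prog}. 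You instead induct on the nilpotency class $s$, pass to the quotient $G/C^s(G)$, lift the cover, and then cover the resulting central ``error'' progression in $C^s(G)$ by translates of the weight-$s$ box of $\overline P(\cdot;L)$, exploiting centrality to slide these translates past the lifted translators. Both inductive schemes work and yield bounds of the same quality; yours is arguably the more standard ``mod out the centre'' argument in nilpotent group theory and leans on parts (1)--(2) as black boxes, while the paper's generator-by-generator peeling is uniform in $s$ and sidesteps the need to track the weight-$s$ coordinates of the chosen lift $p$ (in your version one should note that $p$ can be taken with vanishing weight-$s$ coordinates, so that $p\tilde z$ remains in $\overline P(\cdot;L)$ after absorbing the central box element $\tilde z$).
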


\begin{remark}
An inspection of the arguments of \cite{tointon} reveals that the exponent $r^{O_s(1)}$ in (\ref{eq:nested.progs}) could be written more precisely as $e^{O(s^4)}r^{O(s)}$, or simply $O_s(r^{O(s)})$. An inspection of the arguments below then shows that the constant implied by $O_{r,s}(n)$ in Proposition \ref{powers} (\ref{prop:dilates.1}) could be written as $e^{O(s^4)}r^{O(s)}n$, or more succinctly as $O_s(r^{O(s)}n)$. Similarly, the $O_{r,s,M}(1)$ in Proposition \ref{powers} (\ref{prop:translates}) could be written as $r^{O(s^3r^{O(s)})}M^{sr^{O(s)}}$, or more succinctly as $O_{r,s}(M^{sr^{O(s)}})$. We leave the details to the interested reader.
\end{remark}

\bigskip

It is convenient at this point to introduce some further notation. Given a list $y_1,\ldots,y_k$ of commutators in the $x_i$ we denote by $L^{\chi(y)}$ the $k$-dimensional vector
\[
L^{\chi(y)}:=(L^{\chi(y_1)},\ldots,L^{\chi(y_k)}).
\]
Thus, for example, (\ref{eq:closed.as.ord}) can be rewritten more succinctly as
\begin{equation}\label{eq:closed.as.ord'}
\overline P(x_1,\ldots,x_r;L)=P_\textup{ord}(\xi_1,\ldots,\xi_t;L^{\chi(\xi)}).
\end{equation}
In the event that $L=(L_1,\ldots,L_r)$ is a vector of real numbers, as opposed to integers, we define $\lfloor L\rfloor$ and $\lceil L\rceil$ coordinate-wise; thus, for example,
$\lfloor L\rfloor=(\lfloor L_1\rfloor,\ldots,\lfloor L_r\rfloor)$. When $L'$ is another vector of integers or real numbers, we write $L<L'$ or $L\le L'$ to indicate that the appropriate inequality holds coordinate-wise.

Finally we introduce one further definition.

\begin{definition}[Nilcomplete set of generalised commutators]
A set $Y$ of generalised commutators in a list of group elements $x_1,\ldots,x_r$ is said to be \emph{nilcomplete} if whenever $\alpha_1,\alpha_2\in Y$ and $\epsilon_1=\pm1$ and $\epsilon_2=\pm1$ are such that $[\alpha_1^{\epsilon_1},\alpha_2^{\epsilon_2}]\ne1$, the generalised commutator $[\alpha_1^{\epsilon_1},\alpha_2^{\epsilon_2}]$ also belongs to $Y$.
\end{definition}

We now pass to the proof of Proposition \ref{powers}.

\begin{proof}[Proof of Proposition \ref{powers} (\ref{prop:dilates.1})] We set $B(L):=\bigcup_1^r \{x_i^{\ell_i} : |\ell_i| \leq L_i\}$,
 and use the shorthand $\overline{P}(L):=\overline{P}(x_1,\ldots,x_r;L)$. By definition of $\overline{P}$ (see (\ref{eq:closed.as.ord'})) we have $\overline{P}(nL)\subset P(\xi_1;(nL)^{\chi(\xi_1)})\cdots P(\xi_t;(nL)^{\chi(\xi_t)})$. However, \cite[Lemma C.2]{tointon} implies that
\[
P(\xi_i;(nL)^{\chi(\xi_i)})\subset B(nL)^{O_s(1)}.
\]
Noting again that $t$ is bounded in terms of $r$ and $s$ only, we see that
\begin{equation}\label{eq:PinB}
\overline{P}(nL)\subset B(nL)^{O_{r,s}(1)}\subset B(L)^{O_{r,s}(n)},
\end{equation}
and the result then follows from the fact that $B(L) \subset \overline{P}(L)$.
\end{proof}

\begin{proof}[Proof of Proposition \ref{powers} (\ref{prop:dilates.2})]
We in fact show that
\[
P_\textup{ord}(\xi_1,\ldots,\xi_t;L^{\chi(\xi)})^n\subset P_\textup{ord}(\xi_1,\ldots,\xi_t;(nL)^{\chi(\xi)});
\]
in light of (\ref{eq:closed.as.ord'}) this is sufficient to prove Proposition \ref{powers} (\ref{prop:dilates.2}). 

First, note the trivial identity
\begin{equation}\label{eq:collecting.identity.1}
vu=uv[v,u].
\end{equation}
Since the $\xi_i$ are generalised commutators, it is very straightforward to use this identity repeatedly to see that any string $\sigma$ in the $x_i$ is expressible in the form
\begin{equation}\label{eq:collected}
\xi_1^{l_1}\cdots\xi_t^{l_t}
\end{equation}
for \emph{some} $l_1,\ldots,l_t\in\Z$. For example, the string $x_2x_1^{-1}$ is equal to $x_1^{-1}x_2[x_2,x_1^{-1}]$. This is essentially a simplified version of the \emph{collecting process} of \cite[\S11.1]{hall}. The challenge is to show that if $\sigma$ is of the form $p^{(1)}\cdots p^{(n)}$ with each $p^{(i)}$ belonging to $P_\textup{ord}(\xi_1,\ldots,\xi_t;L^{\chi(\xi)})$ then we may conclude additionally that $|l_i|\le(nL)^{\chi(\xi_i)}$.

Let $P^{(1)},\ldots,P^{(n)}$ be distinct copies of $P_\textup{ord}(\xi_1,\ldots,\xi_t;L^{\chi(\xi)})$, so that $P_\textup{ord}(\xi_1,\ldots,\xi_t;L^{\chi(\xi)})^n$ may be written as $P^{(1)}\cdots P^{(n)}$. We bound from above the number of instances of each generalised commutator $\xi_i$ that can occur when for each $j$ we have an element $p^{(j)}$ of $P^{(j)}$ viewed as a string in the $\xi_i$ in the natural way, and the simplified collecting process is applied to the string $p^{(1)}\cdots p^{(n)}$.

In order to do this, we give a different label to each occurrence of each generalised commutator $\xi_i$ in the initial string $p^{(1)}\cdots p^{(n)}$. Specifically, given the string $p^{(j)}=\xi_1^{l_1^{(j)}}\cdots\xi_t^{l_t^{(j)}}$ in $P^{(j)}$, for each $i=1,\ldots,t$ and for $i'=1,\ldots,l_i$ let $\xi_{i,i'}^{(j)}$ be distinct copies of the element $\xi_i$. Thus we have
\[
\xi_1^{l_1^{(j)}}\cdots\xi_t^{l_t^{(j)}}=\left(\xi_{1,1}^{(j)}\cdots\xi_{1,l_1^{(j)}}^{(j)}\right)\cdots\left(\xi_{t,1}^{(j)}\cdots\xi_{t,l_t^{(j)}}^{(j)}\right)
\]
and, in particular,
\begin{equation}\label{eq:long.string}
p^{(1)}\cdots p^{(n)}=\left(\left(\xi_{1,1}^{(1)}\cdots\xi_{1,l_1^{(1)}}^{(1)}\right)\cdots\left(\xi_{t,1}^{(1)}\cdots\xi_{t,l_t^{(1)}}^{(1)}\right)\right)\cdots\left(\left(\xi_{1,1}^{(n)}\cdots\xi_{1,l_1^{(n)}}^{(n)}\right)\cdots\left(\xi_{t,1}^{(n)}\cdots\xi_{t,l_t^{(n)}}^{(n)}\right)\right).
\end{equation}
Let us apply (\ref{eq:collecting.identity.1}) repeatedly to this string so that when we once again identify each occurrence of the same generalised commutator it is of the form (\ref{eq:collected}). Specifically, beginning with the left-most occurrence of $\xi_1$, use identity (\ref{eq:collecting.identity.1}) to move each instance of $\xi_1$ to the left until it is to the left of every instance of every commutator that is later with respect to the order $\prec$; then perform the same process with the instances of $\xi_2$; and so on up to the instances of $\xi_t$ (which by this time, of course, will be at the extreme right of the expression anyway).

Once this is done, the resultant string of the form (\ref{eq:collected}) will consist of commutators whose arguments are elements of the form $\xi_{i,i'}^{(j)}$. Some of these will simply be the original elements $\xi_{i,i'}^{(j)}$ from the string (\ref{eq:long.string}); let us call such elements \emph{prime} instances of the commutator $\xi_i$. However, suppose the commutator $\xi_i$ is equal to $[\xi_v,\xi_u]$, with $u<v$. Then there may also be instances of $\xi_i$ in (\ref{eq:collected}) that resulted from interchanging instances of $\xi_u$ with instances of $\xi_v$; let us call such instances of $\xi_i$ \emph{composite} instances. Note that a composite instance of the commutator $[\xi_v,\xi_u]$ could have arisen from interchanging a composite instance of $\xi_u$ with a composite instance of $\xi_v$.

Write $p_i^{(j)}$ for the number of prime instances of $\xi_i$ originating from the progression $P^{(j)}$, and $c_i$ for the total number of composite instances of $\xi_i$. Our aim is to prove that $p_i^{(1)}+\ldots+p_i^{(n)}+c_i\le(nL)^{\chi(\xi_i)}$. Since
\begin{equation}\label{eq:prime.from.j}
p_i^{(j)}\le L^{\chi(\xi_i)}
\end{equation}
for each $j$ by definition, this amounts to showing that
\begin{equation}\label{eq:dilates.concl}
c_i\le(nL)^{\chi(\xi_i)}-nL^{\chi(\xi_i)}.
\end{equation}
This is trivial for $i=1,\ldots,r$, since for each value of $i$ less than $r$ we have $c_i=0$, and so by induction we may prove (\ref{eq:dilates.concl}) for an arbitrary fixed value $w$ of $i$, under the assumptions that $w>r$ and that (\ref{eq:dilates.concl}) holds for all smaller values of $i$. Since $w>r$ we have $\xi_w=[\xi_v,\xi_u]$ for some $u$ and $v$ satisfying $u<v<w$.

Let us first bound from above the number of composite instances of $\xi_w$ in which the $\xi_u$ component is prime. Indeed, consider first the number of composite instances of $\xi_w$ in which the $\xi_u$ component is a prime instance originating from $P^{(j)}$. Prime instances of commutators originating in the same progression $P^{(j)}$ are in the desired order from the beginning, and so will never swap under the simplified collecting process, and so if $\zeta$ is a composite instance of $\xi_w$ in which the $\xi_u$ component is a prime instance originating from $P^{(j)}$, then its $\xi_v$ component cannot be a prime instance originating from $P^{(j)}$.

The number of instances of $\xi_v$ that are not prime instances coming from $P^{(j)}$ is at most $(nL)^{\chi(\xi_v)}-L^{\chi(\xi_v)}$ by (\ref{eq:prime.from.j}) and the induction hypothesis. On the other hand, (\ref{eq:prime.from.j}) implies that the number of prime instances of $\xi_u$ originating from $P^{(j)}$ is at most $L^{\chi(\xi_u)}$. The total number of composite instances of $\xi_w$ in which the $\xi_u$ component is a prime instance originating from $P^{(j)}$ is therefore at most
\[
(n^{|\chi(\xi_v)|}-1)L^{\chi(\xi_w)},
\]
and so the total number of composite instances of $\xi_w$ in which the $\xi_u$ component is a prime instance originating from \emph{any} progression is at most
\begin{equation}\label{eq:total.prime}
(n^{|\chi(\xi_v)|+1}-n)L^{\chi(\xi_w)}.
\end{equation}
Let us now bound the number of composite instances of $\xi_w$ in which the $\xi_u$ component is composite. By induction, the number of composite instances of $\xi_u$ is at most $(n^{|\chi(\xi_u)|}-n)L^{\chi(\xi_u)}$, whilst by induction and (\ref{eq:prime.from.j}) the total number of instances of $\xi_v$ is at most $n^{|\chi(\xi_v)|}L^{\chi(\xi_v)}$. The number of composite instances of $\xi_w$ in which the $\xi_u$ component is composite is therefore at most
\begin{equation}\label{eq:total.comp}
(n^{|\chi(\xi_w)|}-n^{|\chi(\xi_v)|+1})L^{\chi(\xi_w)}.
\end{equation}
The total number of composite instances of $\xi_w$ is therefore at most the sum of the quantities (\ref{eq:total.prime}) and (\ref{eq:total.comp}), which is $(n^{|\chi(\xi_w)|}-n)L^{\chi(\xi_w)}$. This verifies (\ref{eq:dilates.concl}) in the case $i=w$, and so ends the proof.
\end{proof}

In light of (\ref{eq:closed.as.ord'}), Proposition \ref{powers} (\ref{prop:translates}) follows from the following more general statement.
\begin{prop}\label{prop:translates'}
Let $y_1,\ldots,y_k$ be a nilcomplete set of generalised commutators in the $x_i$, appearing in order with respect to $\prec$. Then for every $M\in\N$ there is some set $X\subset G$ of size bounded in terms of $k,s,M$ such that
\[
P_\textup{ord}(y_1,\ldots,y_k;(ML)^{\chi(y)})\subset P_\textup{ord}(y_1,\ldots,y_k;L^{\chi(y)})X.
\]
\end{prop}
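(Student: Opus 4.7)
The plan is to argue by induction on the nilpotency step of $\Gamma_Y:=\langle y_1,\ldots,y_k\rangle$, with the abelian case as the base step.

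The abelian base case is immediate: write $m_i = a_i\lfloor L^{\chi(y_i)}\rfloor + r_i$ with $|r_i|\le L^{\chi(y_i)}$ and $|a_i|\le 2M^{|\chi(y_i)|}+1$, so that commutativity yields $y_1^{m_1}\cdots y_k^{m_k} = y_1^{r_1}\cdots y_k^{r_k}\cdot\prod_i y_i^{a_i\lfloor L^{\chi(y_i)}\rfloor}$, with the second factor taking at most $\prod_i(4M^{|\chi(y_i)|}+3)=O_{k,s,M}(1)$ distinct values.

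For the inductive step, let $Z$ be the final nontrivial term of the descending central series of $\Gamma_Y$, which is central and abelian. The subgroup $Z$ is generated by a subset of the top-weight generalised commutators $Y'':=\{y_i:|\chi(y_i)|=s\}$: any non-trivial iterated commutator in the $y_j$'s of length equal to the nilpotency step must have every elementary factor of $x$-weight $1$ (otherwise its total $x$-weight exceeds $s$ and it is trivial in the ambient $s$-step group), hence is an $x$-weight-$s$ commutator which nilcompleteness of $Y$ places in $Y''$. Under $\pi:\Gamma_Y\to\Gamma_Y/Z$, the image of $Y':=Y\setminus Y''$ is a nilcomplete set of generalised commutators in $\pi(x_1),\ldots,\pi(x_r)$ inside a group of strictly lower step: any commutator of elements of $Y'$ either has total weight $<s$ (and then lies in $Y'$) or total weight exactly $s$ (and then lies in $Y''\subseteq Z$, hence projects trivially). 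Applying the inductive hypothesis to the projection of $y_1^{m_1}\cdots y_k^{m_k}$ yields an expression $\pi(y_1)^{r_1}\cdots\pi(y_k)^{r_k}\cdot\overline{x}$ with $|r_i|\le L^{\chi(y_i)}$ for $y_i\in Y'$ (and $r_i=0$ for $y_i\in Y''$) and $\overline{x}$ in a bounded set; lift each $\overline{x}$ to $x\in\Gamma_Y$ via a fixed section.

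This produces $y_1^{m_1}\cdots y_k^{m_k}=y_1^{r_1}\cdots y_k^{r_k}\cdot x\cdot z$ for some $z\in Z$. Expand $z=\prod_{y_j\in Y''}y_j^{c_j}$ (with $c_j=0$ for generators of $Y''$ not in the support of $Z$). The Hall--Petresco polynomial identities for multiplication in nilpotent groups guarantee that each $c_j$ is a polynomial of weighted total degree $|\chi(y_j)|$ in the variables $m_i,r_i$ and the bounded coordinates of $x$, with the $i$-th variable weighted by $|\chi(y_i)|$; since $|m_i|\le M^{|\chi(y_i)|}L^{\chi(y_i)}$ and $|r_i|\le L^{\chi(y_i)}$, this gives $|c_j|\le O_{k,s,M}(L^{\chi(y_j)})$. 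The abelian base case applied to $z$ inside $Z$ then writes $z=\prod_{y_j\in Y''}y_j^{r_j}\cdot x'$ with $|r_j|\le L^{\chi(y_j)}$ and $x'$ from a bounded set. Because $Z$ is central and the indices in $Y''$ follow those in $Y'$ under $\prec$, we may combine everything to conclude $y_1^{m_1}\cdots y_k^{m_k}=y_1^{r_1}\cdots y_k^{r_k}\cdot xx'$ with $xx'$ drawn from a set of size $O_{k,s,M}(1)$. The main obstacle here is the weighted polynomial bound on the fiber coordinates $c_j$, which calls for careful invocation of the Hall--Petresco identities in the general nilcomplete-set setting rather than in a Malcev or Hall basis.
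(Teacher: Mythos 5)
Your overall strategy—induction on the nilpotency step of $\Gamma_Y$ via the centre—is genuinely different from the paper's proof, which inducts on $k$ by peeling off $y_1$ (the \emph{lowest}-weight generator) and pushing its powers through the remaining ordered product using Lemma \ref{lem:comm.p*}. However, your inductive step has several concrete gaps.

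\textbf{The identification of $Z$ with $\langle Y''\rangle$ fails.} You take $Z=\gamma_{s'}(\Gamma_Y)$ (with $s'$ the step of $\Gamma_Y$) and assert it is generated by $Y''=\{y_i:|\chi(y_i)|=s\}$, arguing that a length-$s'$ iterated commutator in the $y_j$ with some factor of $x$-weight $\ge 2$ would have total $x$-weight exceeding $s$. This is only true when $s'=s$. If the $y_i$ all have $x$-weight $\ge 2$ then $s'$ can be much smaller than $s$. Concretely, take $G$ a $6$-step nilpotent group on $x_1,\ldots,x_4$, and $y_1=[x_1,x_2]$, $y_2=[x_3,x_4]$, with $[y_1,y_2]$ central and non-trivial in $G$ (so $\Gamma_Y$ has step $2$). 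Then $Y=\{y_1,y_2,[y_1^{\pm},y_2^{\pm}]\}$ is nilcomplete, $Z=\gamma_2(\Gamma_Y)=\langle[y_1,y_2]\rangle\ne 1$, yet $Y''=\emptyset$. The commutator $[y_1,y_2]$ has $x$-weight $4<s$, so it is not a top-weight element and sits in the middle of the $\prec$-order; your final reordering step (``the indices in $Y''$ follow those in $Y'$ under $\prec$'') therefore also breaks. The confusion is between iterated-commutator length in the $y_j$ and $x$-weight, which do not track one another.

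\textbf{The inductive hypothesis cannot be invoked on $\Gamma_Y/Z$.} The statement you are inducting on concerns nilcomplete sets of generalised commutators in designated ambient generators $x_i$ of an $s$-step group $G$. But $Z$ is a subgroup of $\Gamma_Y$, not necessarily normal in $G$, and the $x_i$ need not lie in $\Gamma_Y$, so $\Gamma_Y/Z$ does not come equipped with images $\bar x_i$ making $\pi(Y')$ into a nilcomplete set of generalised commutators in those images, nor does it come with an ambient step strictly smaller than $s$. Without that structure, the recursion has no well-formed target statement. (The paper sidesteps this by never leaving $G$: its induction on $k$ simply splits off $y_1$ inside $G$ and uses Lemma \ref{lem:comm.p*} and Lemma \ref{lem:split.prog}, which are statements internal to $G$.)

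\textbf{The bound on the fibre coordinates $c_j$ is the real work, and it is asserted rather than proved.} You acknowledge this yourself. Two points: (i) the claim that $c_j$ has weighted \emph{total} degree $|\chi(y_j)|$ is weaker than what the estimate actually needs, namely that every monomial contributing to $c_j$ has weight \emph{vector} exactly $\chi(y_j)$ (otherwise $L^{\chi(y_j)}$ is not the right bound, only $\|L\|_{\infty}^{|\chi(y_j)|}$ is); and (ii) the Hall--Petresco machinery is set up for Hall/Mal'cev bases, whereas your $y_j$ form a nilcomplete set with no a priori uniqueness of the exponents $c_j$. The paper's analogue of this step is exactly the content of Lemma \ref{lem:comm.p*}, which carefully tracks weight vectors via \cite[Proposition~B.3]{tointon} rather than appealing to general polynomial identities; in effect the paper proves, in elementary but precise form, the very estimate you are asking Hall--Petresco to deliver.

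In short, the decomposition by the centre of $\Gamma_Y$ is not compatible with the $x$-weight grading that governs the lengths $L^{\chi(y_i)}$, and the fibre estimate is left as the acknowledged ``main obstacle.'' The paper's induction on $k$ with the explicit commutator-pushing lemma is precisely what circumvents both problems.
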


The first containment of (\ref{eq:nested.progs}) expresses the trivial fact that an ordered progression is contained in inside the corresponding nilprogression. The following useful lemma shows that the reverse containment is also approximately true if the generators form a nilcomplete set.

\begin{lemma}\label{lem:split.prog}
Let $y_1,\ldots,y_k$ be a nilcomplete set of generalised commutators in the $x_i$, appearing in order with respect to $\prec$. Then
\[
P^*(y_1,\ldots,y_k;L^{\chi(y)})\subset P_\textup{ord}(y_1,\ldots,y_k;(O_{k,s}(1)L)^{\chi(y)}).
\]
\end{lemma}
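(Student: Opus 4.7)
The plan is to apply the same simplified collecting process used in the proof of Proposition \ref{powers} (\ref{prop:dilates.2}), exploiting the fact that a nilcomplete set of generalised commutators is closed under the commutation operation arising from the identity $vu=uv[v,u]$.

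First I would take an arbitrary element $w\in P^*(y_1,\ldots,y_k;L^{\chi(y)})$, which by definition is a word in the $y_i^{\pm 1}$ in which, for each $i$, the total number of occurrences of $y_i$ and $y_i^{-1}$ is at most $L^{\chi(y_i)}$. As in the proof of Proposition \ref{powers} (\ref{prop:dilates.2}) I would label each letter in this initial word as a \emph{prime} instance and then apply the collecting process: using $vu=uv[v,u]$ I would move all instances of $y_1$ to the extreme left, then all instances of $y_2$ immediately to their right, and so on. Each swap creates a commutator $[y_v^{\epsilon_v},y_u^{\epsilon_u}]$ with $u<v$, and because the set $\{y_1,\ldots,y_k\}$ is nilcomplete, this commutator is either trivial or equals some $y_w$ with $\chi(y_w)=\chi(y_u)+\chi(y_v)$; such instances I would call \emph{composite}. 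After the process terminates (which it does since we are working in an $s$-step nilpotent group and total weights are strictly increasing upon commutation), the word has the ordered form $y_1^{l_1}\cdots y_k^{l_k}$, and the task reduces to bounding each $|l_w|$ in the desired form.

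Next I would bound the counts $|l_w|$ by induction on the position $w$ in the order $\prec$. For the $w$ corresponding to a generator $x_i$ (i.e.\ $y_w$ of total weight one), there are no composite instances, so $|l_w|\le L^{\chi(y_w)}$ trivially. For larger $w$, writing $y_w=[y_v^{\epsilon_v},y_u^{\epsilon_u}]$ with $u,v<w$, the number of composite instances of $y_w$ is at most the product of the total counts of $y_u$ and $y_v$, which by induction is at most $C_u L^{\chi(y_u)}\cdot C_v L^{\chi(y_v)}=C_u C_v L^{\chi(y_w)}$ for constants $C_u,C_v$ depending only on $k$ and $s$. Adding the at most $L^{\chi(y_w)}$ prime instances yields $|l_w|\le C_w L^{\chi(y_w)}$ for some $C_w=C_w(k,s)$. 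Since the total number of generalised commutators of weight at most $s$ in $r\le k$ generators is bounded in terms of $k$ and $s$, I can take $M:=\max_w C_w=O_{k,s}(1)$; then $C_w\le M^{|\chi(y_w)|}$ since $|\chi(y_w)|\ge 1$ and $M\ge 1$, giving $|l_w|\le (ML)^{\chi(y_w)}$ as required.

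The main obstacle, as in Proposition \ref{powers} (\ref{prop:dilates.2}), is the bookkeeping needed to control composite instances produced by the collecting process; but here the situation is genuinely easier because we have only one ``copy'' of the progression rather than $n$ of them, so the composite-versus-prime case analysis collapses to a single inductive step driven by the multiplicative bound above. I would therefore expect the formal write-up to be a streamlined version of the argument already carried out for Proposition \ref{powers} (\ref{prop:dilates.2}).
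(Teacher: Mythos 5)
Your argument is correct, but it takes a genuinely different route from the paper's. The paper argues by induction on $k$: since $y_2,\ldots,y_k$ remains nilcomplete, it suffices to show
\[
P^*(y_1,\ldots,y_k;L^{\chi(y)})\subset P(y_1;L^{\chi(y_1)})\,P^*(y_2,\ldots,y_k;(O_{k,s}(1)L)^{\chi(y_2)},\ldots,(O_{k,s}(1)L)^{\chi(y_k)}),
\]
and this is obtained by passing through the nilpotent progression: \eqref{eq:nested.progs} gives $P^*\subset P$, and nilcompleteness implies that the basic commutators in the $y_i$ of total weight at most $s$ are themselves among the $y_j$, so $P(y_1,\ldots,y_k;L^{\chi(y)})$ is by definition a product of $O_{k,s}(1)$ bounded powers of the $y_j$, which regroups as required. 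You instead re-run the collecting process directly, with the prime/composite bookkeeping of Proposition \ref{powers}~(\ref{prop:dilates.2}) and an induction along $\prec$ rather than on $k$. Both work; yours is more self-contained (it does not invoke \eqref{eq:nested.progs}, whose proof in \cite{tointon} already encodes a collecting argument), at the cost of redoing machinery the paper prefers to treat as a black box. One detail worth making explicit in a full write-up: a given pair of $y_u$- and $y_v$-instances swaps at most once during the process (elements only ever move left), which is precisely what makes the bound ``composite $y_w$ count $\leq$ (total $y_u$ count) $\times$ (total $y_v$ count)'' valid even though composite $y_v$-instances may still be created while the $y_u$-instances are being collected.
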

\begin{proof}
Since the $y_i$ are in order with respect to $\prec$, for each $j$ the sequence $y_j,\ldots,y_k$ is still nilcomplete, and so by induction on $k$ it suffices to prove that
\[
P^*(y_1,\ldots,y_k;L^{\chi(y)})\subset P(y_1;L^{\chi(y_1)})P^*(y_2,\ldots,y_k;(O_{k,s}(1)L)^{\chi(y_2)},\ldots,(O_{k,s}(1)L)^{\chi(y_k)}).
\]
By (\ref{eq:nested.progs}) it is in fact sufficient to prove that
\[
P(y_1,\ldots,y_k;L^{\chi(y)})\subset P(y_1;L^{\chi(y_1)})P^*(y_2,\ldots,y_k;(O_{k,s}(1)L)^{\chi(y_2)},\ldots,(O_{k,s}(1)L)^{\chi(y_k)}).
\]
However, the fact that the set of $y_i$ is nilcomplete implies that each basic commutator in the $y_i$ is itself equal to some $y_j$, and so this inclusion follows from the fact that there are at most $O_{k,s}(1)$ basic commutators in the $y_i$ of total weight at most $s$.
\end{proof}
\begin{lemma}\label{lem:comm.p*}
Let $z,y_1,\ldots,y_k$ be a nilcomplete set of generalised commutators in the $x_i$, appearing in order with respect to $\prec$. Let $m\in\Z$, and suppose that
\begin{equation}\label{eq:m.hyp}
|m|\le L^{\chi(z)}.
\end{equation}
Then
\[
z^mP^*(y_1,\ldots,y_k;L^{\chi(y)})\subset P^*(y_1,\ldots,y_k;(O_{k,s}(1)L)^{\chi(y)})z^m.
\]
\end{lemma}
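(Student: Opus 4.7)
The plan is to reduce the lemma to a single conjugation calculation and then control the resulting exponents via standard commutator calculus for nilpotent groups.

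First, by Lemma \ref{lem:split.prog} there is a constant $C_0 = O_{k,s}(1)$ such that $P^*(y_1,\ldots,y_k; L^{\chi(y)}) \subset P_\textup{ord}(y_1,\ldots,y_k; (C_0 L)^{\chi(y)})$. It therefore suffices to show that for every word $w = y_1^{l_1} \cdots y_k^{l_k}$ with $|l_i| \le (C_0 L)^{\chi(y_i)}$ one has $z^m w \in P^*(y_1,\ldots,y_k; (C'L)^{\chi(y)}) z^m$ for some $C' = O_{k,s}(1)$. The starting point is the identity $z^m w = (z^m w z^{-m}) z^m$, which by the homomorphism property of conjugation gives
\[
z^m w z^{-m} = v_1 v_2 \cdots v_k, \qquad v_i := z^m y_i^{l_i} z^{-m}.
\]
Because $z, y_1, \ldots, y_k$ form a nilcomplete set in which $z$ is the $\prec$-least element, every non-trivial iterated commutator in $z^{\pm 1}$ and $y_i^{\pm 1}$ is a generalised commutator lying in $\{z, y_1, \ldots, y_k\}$; comparing weights, any one that involves both $z$ and $y_i$ must equal some $y_{j'}$ with $j' > i$ and $\chi(y_{j'}) = p\chi(z) + q\chi(y_i)$ for integers $p, q \ge 1$. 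Consequently each $v_i$ is a word purely in letters $y_j^{\pm 1}$ with $j \ge i$, and hence so is $v_1 \cdots v_k$.

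To bound the number of occurrences of each $y_j^{\pm 1}$, I will invoke the following standard consequence of the Hall--Petresco identities and the commutator-collecting process: in an $s$-step nilpotent group generated by two elements $a, b$, one has
\[
a^m b^l a^{-m} = b^l \cdot \prod_\alpha c_\alpha^{n_\alpha},
\]
where the product has $O_s(1)$ factors, each $c_\alpha$ is a non-trivial iterated commutator in $a, b$ with multiplicities $p_\alpha, q_\alpha \ge 1$ of $a$ and $b$ respectively, and $|n_\alpha| \le O_s(|m|^{p_\alpha} |l|^{q_\alpha})$. Applied with $a = z$, $b = y_i$, $l = l_i$, and using $|m| \le L^{\chi(z)}$ together with $|l_i| \le (C_0 L)^{\chi(y_i)}$, each factor $c_\alpha = y_{j'}$ contributes $|n_\alpha| \le O_{k,s}(1) L^{\chi(y_{j'})}$ (the factor $C_0^{q_\alpha |\chi(y_i)|}$ being absorbed into the implicit constant since $p_\alpha, q_\alpha, |\chi(y_i)|$ are all bounded by $s$). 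Summing the $O_s(1)$ contributions from each $v_i$ and then over $i = 1, \ldots, k$, and adding the leading $y_j^{l_j}$ factor coming from $v_j$ itself, I obtain at most $O_{k,s}(1) L^{\chi(y_j)}$ letters of type $y_j^{\pm 1}$ in $v_1 \cdots v_k$, which gives the required inclusion.

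The main obstacle is the quantitative commutator expansion of $a^m b^l a^{-m}$ in the last step. This is standard nilpotent-group calculus of the same flavour used throughout \cite{tor.free.nilp, tointon} to establish Theorem \ref{thm:nilp}; the essential point is that the polynomial degrees in $m$ and $l$ match exactly the weight decomposition $p\chi(z) + q\chi(y_i) = \chi(y_{j'})$ needed for the weight bookkeeping to close up. Everything else -- the reduction via Lemma \ref{lem:split.prog} and the conjugation identity -- is essentially formal.
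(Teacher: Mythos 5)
Your proof takes a genuinely different route from the paper's, and the difference matters at exactly one step. The paper works with an arbitrary string $p \in P^*(y_1,\ldots,y_k;L^{\chi(y)})$ and moves the block $z^m$ to the right one letter at a time via $uv=[u^{-1},v^{-1}]vu$, inserting a single commutator $[z^{-m},y_j^{\pm 1}]$ (with $y_j$ to the \emph{first} power) each time it passes a $y_j^{\pm 1}$. Each such commutator is then expanded via \cite[Proposition B.3]{tointon} into $O_s(1)$ powers of generalised commutators in $z,y_j$ whose exponents are controlled using only \eqref{eq:m.hyp}; the dependence on the multiplicity $l_j$ of $y_j$ is obtained not from any commutator identity but simply by counting, since there are at most $L^{\chi(y_j)}$ insertions and every $\zeta$ produced involves $y_j$ at least once.

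You instead pre-group into the ordered progression via Lemma \ref{lem:split.prog} and conjugate each whole power $y_i^{l_i}$ at once, which requires a quantitative expansion of $z^m y_i^{l_i} z^{-m}$ — that is, of $[z^{-m},y_i^{-l_i}]$ — with polynomial control in \emph{both} $m$ and $l_i$. This is where the gap lies: the "standard consequence of Hall--Petresco" you invoke, with $|n_\alpha|\le O_s(|m|^{p_\alpha}|l|^{q_\alpha})$, is a genuine strengthening of \cite[Proposition B.3]{tointon}, which only covers the case $l_i=1$, and it is not a quotation of anything cited in the paper. The statement is true, and your weight bookkeeping (identifying nontrivial iterated commutators of $z,y_i$ with later $y_{j'}$ via nilcompleteness and matching $L^{\chi(y_{j'})}$ against $|m|^{p_\alpha}|l_i|^{q_\alpha}$) is exactly right. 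But proving the two-variable expansion from scratch would reproduce a fair amount of the collecting-process bookkeeping the paper already deploys (for instance in the proof of Proposition \ref{powers}(\ref{prop:dilates.2})), and the paper's letter-by-letter formulation is engineered precisely to avoid needing it. Your argument closes once that asserted lemma is actually proved or properly located.
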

\begin{proof}
%On replacing $z$ with $z^{-1}$ if necessary we may assume that
%\begin{equation}\label{eq:m.neg}
%m\le0.
%\end{equation}
Let $p$ be an arbitrary element of $P^*(y_1,\ldots,y_k;L^{\chi(y)})$, viewed in the natural way as a string in the $y_i$ and their inverses.
%; we are required to show that there is some element $\hat{p}$ of $P^*(y_2,\ldots,y_k;(O_{k,s}(M)L)^{\chi(y_2)},\ldots,(O_{k,s}(M)L)^{\chi(y_k)})$ such that
%\[
%y_1^{mL^{\chi(y_1)}}p=\hat{p}y_1^{mL^{\chi(y_1)}}.
%\]
Repeated application of the identity
$uv=[u^{-1},v^{-1}]vu$
implies that there is some formal string $p'$ obtained by inserting into $p$, at various places, commutators of the form
\begin{equation}\label{eq:extra.comm.1}
[z^{-m},y_j^{-1}]
\end{equation}
or
\begin{equation}\label{eq:extra.comm.2}
[z^{-m},y_j],
\end{equation}
such that
\[
z^mp=p'z^m
\]
as group elements. By \cite[Proposition B.3]{tointon}, for each $j$ there are natural numbers $t_j$ and $n_{j,1},\ldots,n_{j,t_j}$ with
\begin{equation}\label{eq:t.j.bound}
t_j\le O_s(1),
\end{equation}
and generalised commutators $\zeta_{j,1},\ldots,\zeta_{j,t_j}$ and $\hat\zeta_{j,1},\ldots,\hat\zeta_{j,t_j}$ in $z$ and $y_j$, such that
\[
[z^{-m},y_j^{-1}]=\zeta_{j,1}^{n_{j,1}}\cdots\zeta_{j,t_j}^{n_{j,t_j}}
\]
and
\[
[z^{-m},y_j]=\hat\zeta_{j,1}^{n_{j,1}}\cdots\hat\zeta_{j,t_j}^{n_{j,t_j}}.
\]
Moreover, writing $\omega_{j,i}$ for the $z$-weight of the generalised commutator $\zeta_{j,i}$ considered as a commutator in $z$ and $y_j$, the bound (\ref{eq:m.hyp}) implies that the integers $n_{j,i}$ arising from \cite[Proposition B.3]{tointon} satisfy
\begin{equation}\label{eq:m.j.i.bound}
|n_{j,i}|\le L^{\omega_{j,i}\chi(z)}.
\end{equation}
We may therefore replace each instance of (\ref{eq:extra.comm.1}) in $p'$ with a string of the form $\zeta_{j,1}^{n_{j,1}}\cdots\zeta_{j,t_j}^{n_{j,t_j}}$, and each instance of (\ref{eq:extra.comm.2}) with a string of the form $\hat\zeta_{j,1}^{n_{j,1}}\cdots\hat\zeta_{j,t_j}^{n_{j,t_j}}$. Write $p''$ for the string produced by making each of these replacements, so that $p'$ and $p''$ are equal as group elements.

Now, each instance of (\ref{eq:extra.comm.1}) or (\ref{eq:extra.comm.2}) appearing in $p'$ arises from an element of the form $y_j$ or $y_j^{-1}$ in the string $p$. There are at most $L^{\chi(y_j)}$ such elements in the string $p$ by definition, and so for each $j$ there can be at most $L^{\chi(y_j)}$ instances of (\ref{eq:extra.comm.1}) and (\ref{eq:extra.comm.2}) between them. In particular, there are at most $L^{\chi(y_j)}|n_{j,i}|$ instances of the strings $\zeta_{j,i}$ and $\hat\zeta_{j,i}$ in the string $p''$. The inequality (\ref{eq:m.j.i.bound}) therefore implies that the total number of instances of the strings $\zeta_{j,i}$ and $\hat\zeta_{j,i}$ in the string $p''$ is at most $L^{\chi(\zeta_{j,i})}$. Here, $\chi(\zeta_{j,i})$ refers to the weight of $\zeta_{j,i}$ as a generalised commutator in the $x_i$.

Since the list $z,y_1,\ldots,y_k$ is nilcomplete, each $\zeta_{j,i}$ is equal to some $y_{l_{j,i}}$, and each $\hat\zeta_{j,i}$ is equal to some $y_{\hat{l}_{j,i}}$. Let $\hat p$ be the string of $y_l$ formed from $p''$ by replacing each instance of $\zeta_{j,i}$ in $p''$ by $y_{l_{j,i}}$, and each instance of $\hat\zeta_{j,i}$ by $y_{\hat{l}_{j,i}}$. It follows from (\ref{eq:t.j.bound}) that the total number of $\zeta_{j,i}$ and $\hat\zeta_{j,i}$ is at most $O_{k,s}(1)$, and so the preceding paragraph and the definition of $p$ imply that the string $\hat p$ contains at most $(O_{k,s}(1)L)^{\chi(y_l)}$ instances of $y_l$ and its inverse. In particular, $\hat p\in P^*(y_1,\ldots,y_k;(O_{k,s}(M)L)^{\chi(y)})$, and so the lemma follows from the fact that $\hat p$ and $p'$ are equal as group elements.
\end{proof}

We can now finally prove Proposition \ref{prop:translates'}.

\begin{proof}[Proof of Proposition \ref{prop:translates'}]
We prove that there is a set $X$ of size bounded in terms of $s$ and $M$ such that
\[
P_\textup{ord}(y_1,\ldots,y_k;(ML)^{\chi(y)})\subset P_\textup{ord}(y_1,\ldots,y_k;L^{\chi(y_1)},(O_{k,s}(M)L)^{\chi(y_2)},\ldots,(O_{k,s}(M)L)^{\chi(y_k)})X;
\]
by induction on $k$ this suffices to prove the proposition. More precisely, we prove that this holds with $X=P(y_1^{L^{\chi(y_1)}};M^{|\chi(y_1)|})$, which has cardinality at most $2M^s+1$. To prove that it holds with this choice of $X$, note simply that
\[
\begin{split}
P_\textup{ord}(y_1,\ldots,y_k;(ML)^{\chi(y)})\qquad\qquad\qquad\qquad\qquad\qquad
\qquad\qquad\qquad\qquad\qquad\qquad\qquad\qquad\qquad\qquad\\
\qquad\qquad\subset P(y_1,L^{\chi(y_1)})P^*(y_2,\ldots,y_k;(O_{k,s}(M)L)^{\chi(y_2)},\ldots,(O_{k,s}(M)L)^{\chi(y_k)})P(y_1^{L^{\chi(y_1)}};M^{|\chi(y_1)|})
\end{split}
\]
by Lemma \ref{lem:comm.p*}, and that
\[
\begin{split}
P^*(y_2,\ldots,y_k;(O_{k,s}(M)L)^{\chi(y_2)},\ldots,(O_{k,s}(M)L)^{\chi(y_k)})
\qquad\qquad\qquad\qquad\qquad\qquad\qquad\qquad\\
\qquad\qquad\qquad\qquad\qquad\qquad\qquad\qquad
\subset P_\textup{ord}(y_2,\ldots,y_k;(O_{k,s}(M)L)^{\chi(y_2)},\ldots,(O_{k,s}(M)L)^{\chi(y_k)})
\end{split}
\]
by Lemma \ref{lem:split.prog}. This ends the proof of Proposition \ref{prop:translates'}, and hence of Proposition \ref{powers} (\ref{prop:translates}).
\end{proof}

Having proved Proposition \ref{powers}, we can now easily deduce Theorem \ref{prop:scales.nilp}.

\begin{proof}[Proof of Theorem \ref{prop:scales.nilp}.] First we claim that it is enough to find positive integers $N_0$ and $C$ depending only on $K$ and $s$ such that $S^{N_0n}$ is a $C$-approximate subgroup for all $n \geq 1$. Indeed, since $S$ is a $K$-approximate subgroup, there is some $Y \subset G$, the size of which is bounded in terms of $K$ and $s$, such that $S^{2N_0} \subset SY$. Given any integer $m\ge N_0$, there exists $n\ge1$ such that $nN_0 \leq m < (n+1)N_0$, and so $S^{2m} \subset S^{2(n+1)N_0-1} = S^{2nN_0-1}S^{2N_0} \subset S^{2nN_0}Y \subset S^{nN_0}XY$ for some subset $X$ of size at most $C$. For $m\le N_0$, meanwhile, we have $S^{2m}\subset S^{2N_0}\subset S^mY$. This proves the claim.
 
Now, without loss of generality, we may assume that $S$ generates $G$. By Theorem \ref{thm:nilp} there is a finite subgroup $H \leq G$ normalised by $S$, and hence normal in $G$; integers $r ,M\leq K^{O_s(1)}$; and a nilcomplete progression $\overline{P}(L):=\overline{P}(x_1,...,x_r;L)$ for some elements $x_1,...,x_r$ in $G$ such that
$$S \subset H\overline{P}(L) \subset S^{M}.$$
Increasing $M$ if necessary, Proposition \ref{powers} (\ref{prop:dilates.1}) implies that $\overline{P}(nL) \subset \overline{P}(L)^{Mn}$ for all $n \ge 1$. Let $N_0:=M^2$.
Since $H$ is normal in $G$, we have $(H\overline{P}(L))^n=H\overline{P}(L)^n$ for every $n \ge 1$. Applying Proposition \ref{powers} (\ref{prop:dilates.2}) and (\ref{prop:translates}), we therefore see that for each $n \ge 1$ there is a subset $X_n$ of size bounded in terms of $s$ and $K$ such that
\[
\begin{split}
S^{2N_0n} \subset (H\overline{P}(L))^{2N_0n} = H\overline{P}(L)^{2N_0n} \subset H\overline{P}(2N_0nL)
\qquad\qquad\qquad\qquad\qquad\qquad\qquad\\
\subset\overline{P}(nL)X_n\subset H(\overline{P}(L))^{Mn}X_n\subset S^{M^2n}X_n=S^{N_0n}X_n,
\end{split}
\]
and so the theorem is proved.
\end{proof}
This also completes the proof of  Theorem \ref{thm:scales}.

\subsection{Doubling of the various types of progression}
We have made heavy use of the principle that sets of small doubling are controlled by nilprogressions. In this subsection we take the opportunity to explain how Proposition \ref{powers} can be used to shed light on the extent to which the converse holds.

First, we note that nilcomplete progressions always enjoy small doubling. Indeed, an immediate consequence of Proposition \ref{powers} is that for every nilcomplete progression $\overline{P}$ of rank $r$ and step $s$ there is a set $X$ of cardinality $O_{r,s}(1)$ such that $\overline{P}^2\subset\overline{P}X$.

For nilprogressions the situation is not quite so straightforward; one may check, for example, that in the free $s$-step nilpotent group generated by $x_1$ and $x_2$ the ratio $|P^*(x_1,x_2;L,1)^3|/|P^*(x_1,x_2;L,1)|$ tends to infinity as $L\to\infty$. Nonetheless, it is shown in Tao's book \cite[Proposition 2.2.3]{tao.hilb} that, provided the lengths $L_1,\ldots,L_r$ are large enough in terms of the rank $r$ and step $s$ only, a nilprogression is an $O_{r,s}(1)$-approximate group. We recover this fact, and obtain a similar statement for nilpotent progressions, as follows.
\begin{corollary}
Fix $r,s\ge1$. Then there is a constant $\lambda=\lambda_{r,s}$ such that whenever $x_1,\ldots,x_r$ are elements in an $s$-step nilpotent group and $L=(L_1,\ldots,L_r)$ is a vector of integers that are at least $\lambda$, there exists a set $X$ of cardinality at most $O_{r,s}(1)$ such that the nilprogression $P^*=P^*(x_1,\ldots,x_r;L)$ and the nilpotent progression $P=P(x_1,\ldots,x_r;L)$ satisfy
\[
(P^*)^2\subset P^2\subset P^*X\subset PX.
\]
In particular, since it is symmetric the nilprogression $P^*$ is an $O_{r,s}(1)$-approximate group.
\end{corollary}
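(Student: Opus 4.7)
The first and third containments, $(P^*)^2 \subset P^2$ and $P^* X \subset P X$, are trivial from the inclusion $P^* \subset P$ recorded in \eqref{eq:nested.progs}. The substantive content is therefore to produce a single set $X$ of cardinality $O_{r,s}(1)$ such that $P^2 \subset P^* X$.

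The plan is to transfer the doubling of the nilcomplete progression $\overline{P}$ provided by Proposition \ref{powers} down to $P^*$ along the chain $P^* \subset P \subset \overline{P}$ of \eqref{eq:nested.progs}. Concretely, Proposition \ref{powers}(iii) with $M=2$ yields a set $X_0$ of cardinality $O_{r,s}(1)$ such that $\overline{P}(L)^2 \subset \overline{P}(L) X_0$, and combined with $P \subset \overline{P}$ this gives $P^2 \subset \overline{P}(L) X_0$ with no hypothesis on $L$. It therefore remains to produce, for $\lambda = \lambda_{r,s}$ sufficiently large, a set $Y$ of cardinality $O_{r,s}(1)$ with $\overline{P}(L) \subset P^*(L) Y$ whenever each $L_i \geq \lambda$; taking $X := Y X_0$ then completes the proof.

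The main obstacle is this last covering, where the hypothesis that each $L_i$ be large is genuinely needed (as shown by the counter-example $P^*(x_1,x_2;L,1)$ recalled just before the corollary). To establish it I would first reduce to the free $s$-step nilpotent group on the generators $x_1,\ldots,x_r$: if the covering $\overline{P}(L) \subset P^*(L) Y$ holds for the corresponding sets built in the free group, it passes to the given $s$-step nilpotent group by pushing forward along the canonical quotient $\pi$ (with $\pi(Y)$ providing the required translates). In the free group, every element has a unique collected normal form $c_1^{l_1}\cdots c_{t'}^{l_{t'}}$ in the basic commutators $c_i$, and a standard analysis of the collecting process (cf.~the proof of Proposition \ref{powers}(\ref{prop:dilates.2})) shows that every element of $\overline{P}(L)$ admits such a representation with $|l_i| \leq O_{r,s}(L^{\chi(c_i)})$. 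Conversely, iterating commutator identities of the form $[x_i^a,x_j^b] \equiv [x_i,x_j]^{ab}$ modulo higher-weight commutators, together with their $s$-step analogues arising from the Hall--Petresco calculus, shows that every product $c_1^{l_1}\cdots c_{t'}^{l_{t'}}$ with $|l_i| \leq c_{r,s}\, L^{\chi(c_i)}$ already lies in $P^*(L)$, for some absolute constant $c_{r,s}>0$ (the large-$L$ assumption being used to ensure the commutator identities can be applied without the lower-order correction terms spilling outside of $P^*(L)$). Comparing these two ranges shows that $\overline{P}(L)$ is covered by at most $O_{r,s}(1)$ translates of $P^*(L)$ in the free group, as required.

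Carrying out this commutator-calculus comparison uniformly across all weights, and verifying that the higher-order corrections can in fact be absorbed into the bounded set $Y$ rather than blowing its size up, is the technical crux of the plan; I would expect the argument to proceed by induction on the step $s$, with the inductive step at each weight level handled by a variant of Lemma \ref{lem:comm.p*} applied to the nilcomplete list of generalised commutators.
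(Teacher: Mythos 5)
Your decomposition of the problem is right: the first and third containments are immediate from $P^*\subset P$, so the whole content is producing a bounded $X$ with $P^2\subset P^*X$, and you correctly reduce this further to establishing a covering of the form $\overline{P}(L)\subset P^*(L)\,Y$ with $|Y|=O_{r,s}(1)$, for $L$ large, plus the doubling of $\overline{P}$ from Proposition~\ref{powers}. This is exactly the pressure point, and it is where the lower bound on the $L_i$ must enter.

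However, the route you propose for that covering has two genuine gaps, and it is also considerably heavier than what is needed. The first gap is the claim that $P(c_{r,s}L)\subset P^*(L)$ for all sufficiently large $L$ via ``iterating commutator identities modulo higher-weight terms'': you acknowledge this is the technical crux and do not prove it. Even in the free $2$-step case one has to budget carefully between the occurrences of each $x_i$ used to produce the $x_i^{l_i}$ factors and those used to generate the commutator powers, and in higher step the correction terms from the Hall--Petresco calculus are precisely the sort of thing that is delicate to absorb uniformly; nothing in the plan shows they do not escape $P^*(L)$. The second gap is the final comparison step: once you have $\overline{P}(L)\subset P(CL)$ and $P(cL)\subset P^*(L)$, you still need to cover $P(CL)$ by $O_{r,s}(1)$ group-translates of $P(cL)$. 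You assert this as ``comparing the two ranges,'' but a nilpotent progression is not a coordinate box under the group multiplication, and the covering lemma in the paper (Proposition~\ref{prop:translates'}) is proved for ordered progressions over a \emph{nilcomplete} set of generalised commutators, not for the nilpotent progression $P$ built on basic commutators; so the covering does not follow for free.

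The paper sidesteps both issues. It never attempts to go from $P$ or $\overline{P}$ down into $P^*$ at the \emph{same} scale. Instead it uses the containment coming from \eqref{eq:PinB}, namely $\overline{P}(L')\subset P^*(CL')$ for every $L'$, and then applies Proposition~\ref{powers}\eqref{prop:translates} not at scale $L$ but at the \emph{shrunk} scale $\lceil L/2C\rceil$ (with $M=4C$). This gives
\[
\overline{P}(L)^2\subset\overline{P}(2L)\subset\overline{P}\bigl(4C\lceil L/2C\rceil\bigr)\subset\overline{P}\bigl(\lceil L/2C\rceil\bigr)X\subset P^*\bigl(C\lceil L/2C\rceil\bigr)X\subset P^*(L)X,
\]
where the final inclusion is exactly where the hypothesis $L_i\ge\lambda:=2C$ is used (to guarantee $C\lceil L_i/2C\rceil\le L_i$). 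In other words, the paper establishes the covering $\overline{P}(L)\subset P^*(L)X$ not by a new commutator-calculus argument but by shrinking first so that the known inflation $\overline{P}(L')\subset P^*(CL')$ lands back inside $P^*(L)$. This buys a one-line, fully rigorous proof using only machinery already in place, whereas your route would require developing and justifying new estimates on the collecting process and a separate covering lemma for $P$. I would recommend replacing the free-group/collecting-process plan with this shrink-then-inflate argument.
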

\begin{proof}
Note that (\ref{eq:PinB}) implies the existence of a constant $C=C_{r,s}$ such that
\begin{equation}\label{eq:bar.in.nilprog.C}
\overline{P}(x_1,\ldots,x_r;L')\subset P^*(x_1,\ldots,x_r;CL')
\end{equation}
for every $L'=(L'_1,\ldots,L'_r)$. We claim that taking $\lambda=2C$ satisfies the requirements of the corollary; thus we may assume that $L_1,\ldots,L_r\ge2C$. This implies in particular that
\begin{equation}\label{eq:L>2C}
\lceil L_i/2C\rceil\le L_i/C
\end{equation}
for each $i$. Denote by $\lceil L/2C\rceil$ the vector $(\lceil L_1/2C\rceil,\ldots,\lceil L_r/2C\rceil)$, and let $X$ be the set obtained by applying Proposition \ref{powers} \eqref{prop:translates} to $\overline{P}(x_1,\ldots,x_r;\lceil L/2C\rceil)$ in the case $M=4C$, so that
\begin{equation}\label{eq:specific.translates}
\overline P(x_1,\ldots,x_r;4C\lceil L/2C\rceil)\subset\overline{P}(x_1,\ldots,x_r;\lceil L/2C\rceil)X.
\end{equation}
Now note that
\begin{align*}
P^*(x_1,\ldots,x_r;L)^2&\subset P(x_1,\ldots,x_r;L)^2                              &&\qquad\text{by (\ref{eq:nested.progs})}\\
                       &\subset \overline{P}(x_1,\ldots,x_r;L)^2                   &&\qquad\text{by (\ref{eq:nested.progs})}\\
                       &\subset \overline{P}(x_1,\ldots,x_r;2L)                    &&\qquad\text{by Proposition \ref{powers}}\\
                       &\subset \overline{P}(x_1,\ldots,x_r;4C\lceil L/2C\rceil)   \\
                       &\subset \overline{P}(x_1,\ldots,x_r;\lceil L/2C\rceil)X    &&\qquad\text{by (\ref{eq:specific.translates})}\\
                       &\subset P^*(x_1,\ldots,x_r;L)X                             &&\qquad\text{by (\ref{eq:bar.in.nilprog.C}) and (\ref{eq:L>2C})}\\
                       &\subset P(x_1,\ldots,x_r;L)X                               &&\qquad\text{by (\ref{eq:nested.progs})}.
\end{align*}
\end{proof}
One may show by a similar argument, making use of Remark \ref{rem:proper}, that a \emph{proper} $s$-step nilpotent progression $P$ of rank $r$ satisfies $|P^k|\le O_{k,r,s}(1)|P|$, irrespective of its side lengths. We leave the details to the interested reader.

\section{A Gromov-type theorem for finite groups}\label{sec:diam}
In this section we prove Theorem \ref{thm:gromovfinite} about the algebraic structure of almost flat Cayley graphs, together with two variants of that statement, which we gather as follows.

\begin{theorem}[Structure of almost flat finite groups]\label{struct} Let $\varepsilon, \delta>0$. Let $G$ be a finite group generated by a symmetric subset $S$ whose Cayley graph has diameter $\gamma:=\diam_S(G)$ and is $\eps$-almost flat, which is to say that
 $$\gamma \geq \left( \frac{|G|}{|S|} \right)^\eps.$$
Then there is a constant $D_{\eps,\delta}$ depending only on $\eps,\delta$ such that if $\gamma \geq D_{\eps,\delta}$ then
 \begin{enumerate}
\item $G$ has a normal subgroup $H$ contained in $S^{[\gamma^\delta]}$ such that $G/H$ has a nilpotent subgroup of index, rank and nilpotency class bounded in terms of $\eps$ and $\delta$ only;
\item $G$ has a normal subgroup $H$ contained in $S^{[\gamma^{1/2 + \delta}]}$ such that $G/H$ has an abelian subgroup of index and rank bounded in terms of $\eps$ and $\delta$ only;
\item $G$ has a normal subgroup $G_0$ of index at most $C_\eps$ that admits a cyclic quotient with diameter at least $c_\eps\gamma$, where $C_\eps,c_\eps$ are positive constants depending on $\eps$ only.
\end{enumerate}
\end{theorem}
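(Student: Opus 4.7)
The plan is to extract nilpotent structure from almost flatness via Proposition~\ref{prop:v.nilp}, then refine it to abelian and cyclic quotients using the internal geometry of nilcomplete progressions from Section~\ref{sec:coms.nilprogs}.

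\emph{Small-scale doubling.} Almost flatness gives $|S^\gamma|=|G|\le|S|\gamma^{1/\varepsilon}$. A $5$-adic pigeonhole on the multiplicative ratios $|S^{5^{k+1}}|/|S^{5^k}|$ over $0\le k\le\lfloor\delta'\log_5\gamma\rfloor$, for any fixed $\delta'>0$, yields (once $\gamma$ is large enough in terms of $\varepsilon$ and $\delta'$) a scale $n\le\gamma^{\delta'}$ with $|S^{5n}|\le K|S^n|$ and $K=K(\varepsilon,\delta')$; this is essentially Lemma~\ref{lem:5adic.ph}. Feeding $n$ (with $\delta'=\delta$) into Proposition~\ref{prop:v.nilp} produces a normal subgroup $H\subset S^{8n}\subset S^{[\gamma^\delta]}$ of $G$ and a subgroup $\Gamma\le G$ of index at most $n_0(K)$ with $\Gamma/H$ nilpotent of rank and class bounded in terms of $\varepsilon$ and $\delta$. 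The image of $\Gamma$ in $G/H$ is the nilpotent subgroup required in Part~(1).

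\emph{Part~(2).} Apply Part~(1) with some $\delta'\le\delta/2$ in place of $\delta$, and replace $\Gamma$ by the intersection of its $G$-conjugates (still of bounded index) so that $\Gamma$ is normal in $G$. Set $H':=[\Gamma,\Gamma]\cdot H$; this is normal in $G$, and $\Gamma/H'$ is abelian of rank at most the rank of $\Gamma/H$. It remains to show $H'\subset S^{[\gamma^{1/2+\delta}]}$. By Theorem~\ref{thm:nilp}, $\Gamma/H$ is covered by a bounded number of translates of the image of a nilcomplete progression $\overline P(x_1,\ldots,x_r;L)$, with $r$ and the nilpotency class $s$ bounded in terms of $\varepsilon$ and $\delta$. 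The commutator subgroup is contained in the ``higher-weight part'' of $\overline P$, namely the ordered progression on the generalised commutators $\xi_{r+1},\ldots,\xi_t$ of weight at least~$2$. The central geometric input is the ball-box estimate
\[
\xi^k \in S^{O_{r,s}(|k|^{1/w})} \qquad\text{for every generalised commutator $\xi$ of weight $w\ge 2$ and every $|k|\le L^{\chi(\xi)}$.}
\]
One proves this by induction on $s$: writing $\xi=[\xi',\xi'']$ with $w=w'+w''$ and setting $a:=\lceil k^{w'/w}\rceil$, $b:=\lceil k^{w''/w}\rceil$, the commutator identity $[\xi'^{a},\xi''^{b}]=\xi^{ab}\cdot(\text{higher-weight commutators})$ splits $\xi^{k}$ into two pieces each of $S$-length $O(k^{1/w})$ by the induction, while the higher-weight error terms lie in a subgroup of strictly lower nilpotency step and are handled by the inductive hypothesis, with Proposition~\ref{powers}~(\ref{prop:dilates.1}) used to translate $\overline P$-coordinates into $S$-lengths. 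Since $|G|\le|S|\gamma^{1/\varepsilon}$ and $\xi$ acts on a finite group, the relevant exponent $k$ may be capped by the order of $\xi$ in $\Gamma/H$; this cap, together with the ball-box estimate, puts every factor $\xi^k$ into $S^{O(\gamma^{1/2})}$ when $w=2$ and into an even smaller ball for $w>2$. Composing the at most $t=O_{r,s}(1)$ such factors yields $[\Gamma,\Gamma]\cdot H/H\subset S^{O(\gamma^{1/2})}\cdot H/H$, and hence $H'\subset S^{[\gamma^{1/2+\delta}]}$.

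\emph{Part~(3).} Fix $\delta=1/4$ in Part~(2), and let $G_0\trianglelefteq G$ be the preimage of the core of the abelian subgroup $A/H'$ of $G/H'$ from Part~(2). Then $[G:G_0]\le C_\varepsilon$, and $G_0/H'\cong\Z/n_1\times\cdots\times\Z/n_d$ for some $d\le d_\varepsilon$. Since $H'\subset S^{[\gamma^{3/4}]}$ and $[G:G_0]\le C_\varepsilon$, the $S$-diameter of $G_0/H'$ is at least $\gamma-O_\varepsilon(\gamma^{3/4})\ge\gamma/2$ for $\gamma$ large. Hence $\max_i n_i\ge c_\varepsilon \gamma$, and projecting $G_0$ onto the corresponding cyclic factor furnishes the required cyclic quotient.

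The main obstacle is the ball-box estimate underlying Part~(2): although natural, its rigorous derivation requires a careful double induction on the nilpotency step and on the weight, together with delicate bookkeeping of the higher-weight commutator error terms generated by the expansion of $[\xi'^{a},\xi''^{b}]$, and the almost-flat cardinality bound $|G|\le|S|\gamma^{1/\varepsilon}$ is needed to ensure that the ``effective'' exponent of each generalised commutator in the finite quotient is small enough for the bound to translate into $O(\gamma^{1/2+\delta})$.
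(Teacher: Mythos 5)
Your Part (1) matches the paper: a $5$-adic pigeonhole (Lemma~\ref{lem:5adic.ph}) produces doubling at a scale $n\le\gamma^{\delta/2}$, and Proposition~\ref{prop:v.nilp} then gives the normal subgroup $H\subset S^{8n}$ and the bounded-index, bounded-rank-and-class nilpotent quotient. That part is fine.

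Part (2) has the right global structure (pass to the normal nilpotent $\Gamma$, control $P$ via Theorem~\ref{thm:nilp}, kill $[\Gamma,\Gamma]$), and your ``ball-box'' heuristic $\xi^k\in S^{O(|k|^{1/w})}$ is indeed the correct geometry underlying the paper's Lemma~\ref{small-commutator}. But the justification of the cap on $k$ is wrong. You cap $k$ by the order of $\xi$ in $\Gamma/H$ and appeal to $|G|\le|S|\gamma^{1/\varepsilon}$; that only gives an order bound of shape $\gamma^{1/\varepsilon}$, and plugging that into $|k|^{1/2}$ yields $\gamma^{1/(2\varepsilon)}$, which is hopeless when $\varepsilon$ is small. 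The point of the paper's Lemma~\ref{small-commutator} is precisely that one should \emph{not} try to bound the order of $\xi$; instead, by multilinearity (Lemma~\ref{multilin}), any power $\alpha(x_{j_1},\ldots,x_{j_s})^n$ equals $\alpha(x_{j_1}^n,x_{j_2},\ldots,x_{j_s})$, and since $x_{j_1}^n\in P^{\gamma_P}$ one can write $x_{j_1}^n\equiv x_1^{\ell_1}\cdots x_r^{\ell_r}$ mod $[\Gamma,\Gamma]$ with $|\ell_i|\le L_i\gamma_P$. The cap $\gamma_P=\diam_P(\Gamma)=O_\varepsilon(\gamma)$ then enters through the progression coordinates, not through $|G|$. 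Without that mechanism your bound does not close, so the ``main obstacle'' you flag at the end is a genuine gap, not just bookkeeping.

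Part (3) has a larger gap. Having reduced to $G_0/H'\cong\Z/n_1\times\cdots\times\Z/n_d$ of diameter $\ge\gamma/2$ with $d$ bounded, you assert ``hence $\max_i n_i\ge c_\varepsilon\gamma$, and projecting onto the corresponding cyclic factor furnishes the required cyclic quotient.'' Neither claim follows: the generating set of $G_0/H'$ obtained from $S$ via Lemma~\ref{lem:rei-schr} is arbitrary and need not respect the invariant-factor decomposition, so a large diameter of the product does not directly give a large $n_i$, and even if some $n_i$ is large, the projection of an arbitrary generating set onto that factor need not yield large diameter (the generators could all project near zero on that coordinate while still generating). This is exactly the problem the paper resolves by first replacing $\overline S^{2n_1}$ with a Green--Ruzsa generalised arithmetic progression so that $G/H$ sits as $\Z^r/\Delta$ in adapted coordinates, and then applying Lemma~\ref{torus-quotient} (Minkowski's second theorem / successive minima) to extract a rank-one quotient torus of comparable diameter, with careful $\ell^\infty$/$\ell^2$ comparisons to go back and forth between the word metric and the Euclidean torus metric. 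Your sketch skips all of this, and the conclusion does not follow from what you wrote.
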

In $(3)$, the diameter of the cyclic quotient is understood with respect to a system of generators of $G_0$ derived from $S$ by the usual Reidemeister--Schreier process, which we summarise in the following lemma.
\begin{lemma}\label{lem:rei-schr}
Let $G$ be a group generated by a finite symmetric set $S$, and let $G_0$ be a subgroup of $G$ of index $d$. Then $G_0\cap S^{2d-1}$ contains a symmetric generating set $S_0$ for $G_0$ satisfying $\frac{1}{d}|S|\le|S_0|\le d|S|$ and $\frac{1}{2d}(\diam_S(G)-d)\le\diam_{S_0}(G_0)\le\diam_S(G)$.
\end{lemma}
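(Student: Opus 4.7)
The plan is to run the standard Reidemeister--Schreier construction, with a little care to extract the quantitative bounds.

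First, I would fix a Schreier transversal $T$ for the right cosets $G_0\backslash G$, chosen by breadth-first search in the Cayley graph of $(G,S)$, starting from the identity. This gives $1\in T$ and $|t|_S\le d-1$ for every $t\in T$, so that $T\subset S^{d-1}$. For each $g\in G$ write $\tau(g)\in T$ for its coset representative, so that $g\tau(g)^{-1}\in G_0$. I would then take
\[
S_0:=\{\,t\,s\,\tau(ts)^{-1} : t\in T,\ s\in S\,\}.
\]
By construction $S_0\subset G_0\cap S^{2d-1}$, and $S_0$ is symmetric because if $g_0=ts\tau(ts)^{-1}$ and we set $t'=\tau(ts)$, then $g_0^{-1}=t's^{-1}t^{-1}$ and $\tau(t's^{-1})=t$ (since $t's^{-1}=g_0^{-1}t\in G_0t$), so $g_0^{-1}\in S_0$. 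The identity lies in $S_0$ because $1\in S$. Since there are $|T|=d$ choices of $t$, the upper bound $|S_0|\le d|S|$ is immediate, and the lower bound comes from the $t=1$ slice: the map $s\mapsto s\tau(s)^{-1}$ from $S$ to $S_0$ is injective on each right coset of $G_0$ (if $s\tau(s)^{-1}=s'\tau(s')^{-1}$ with $\tau(s)=\tau(s')$ then $s=s'$), so it is at most $d$-to-$1$, giving $|S_0|\ge|S|/d$.

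For the upper bound on the diameter, I would perform the Schreier rewriting: given $g_0\in G_0$ written as $s_1\cdots s_n$ with $n\le\diam_S(G)$, set $t_0:=1$, $t_i:=\tau(s_1\cdots s_i)$ for $i\ge1$, and $a_i:=t_{i-1}s_it_i^{-1}$. An easy induction shows $s_1\cdots s_i=a_1\cdots a_i\cdot t_i$, and since $g_0\in G_0$ forces $t_n=1$, we obtain $g_0=a_1\cdots a_n$. Each $a_i$ lies in $S_0$ by construction, yielding $\diam_{S_0}(G_0)\le\diam_S(G)$.

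For the lower bound on the diameter, I would choose $g\in G$ with $|g|_S=\diam_S(G)=:D$ and write $g=g_0t$ with $g_0\in G_0$ and $t\in T$; then $|g_0|_S\ge|g|_S-|t|_S\ge D-(d-1)$. Since $S_0\subset S^{2d-1}$, any expression of $g_0$ as a product of $k$ elements of $S_0$ gives $|g_0|_S\le(2d-1)k$, so
\[
\diam_{S_0}(G_0)\ge\frac{D-(d-1)}{2d-1}\ge\frac{\diam_S(G)-d}{2d},
\]
completing the proof. There is no real obstacle here — the only point that needs attention is keeping the conventions of left vs.\ right cosets consistent so that the Schreier rewriting identity $s_1\cdots s_i=a_1\cdots a_i\cdot t_i$ comes out cleanly and yields the terminal condition $t_n=1$ for elements of $G_0$.
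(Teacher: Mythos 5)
Your proof is correct and follows essentially the same route as the paper: a Schreier transversal $T\subset S^{d-1}$, the generating set $S_0=\{ts\tau(ts)^{-1}\}$, and the Schreier rewriting to bound the diameters. The only difference is that you verify directly the facts (symmetry of $S_0$, generation, and the upper diameter bound) which the paper delegates to Hall's book \cite[Lemma 7.2.2]{hall}, but the underlying argument is the same.
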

\begin{proof}
Let $T$ be a set of right-coset representatives for $G_0$ in $G$, and for $x\in G$ write $\tau(x)$ for the unique element of $T$ such that $x\in G_0\tau(x)$. It is shown in \cite[Lemma 7.2.2]{hall} that the set $S_0=\{ts\tau(ts)^{-1}:s\in S,t\in T\}$ is symmetric and generates $G_0$ with diameter at most $\diam_S(G)$. We may assume that $d>1$, in which case $S$ intersects at least two cosets of $G_0$. The argument of Lemma \ref{lem:exhausts.cosets} therefore shows that we may take $T\subset S^{d-1}$, and so $S_0\subset G_0\cap S^{2d-1}$ and $\diam_S(G)\le2d\diam_{S_0}(G_0)+d$, as desired. It is trivial that $|S_0|\le d|S|$, and the fact that $|S_0|\ge \frac{1}{d}|S|$ follows because there must be a coset of $G_0$ containing at least $\frac{1}{d}|S|$ elements of $S$.
\end{proof}
\begin{remark}\label{rem:rei-schr}
Lemma \ref{lem:rei-schr} implies that if the index of $G_0$ is bounded and $\diam_S(G)$ is sufficiently large then $(G,S)$ is almost flat if and only if $(G_0,S_0)$ is almost flat.
\end{remark}

\begin{remark}Theorem \ref{struct} $(3)$ can be seen as a finite analogue of the key step in Gromov's proof of his polynomial growth theorem \cite{gromov} in which he establishes that every finitely generated group with polynomial growth has a finite index subgroup, which maps onto $\Z$.
\end{remark}

\begin{remark} Lee and Makarychev \cite{lee-makarychev} show that if the Cayley graph of a finite group is uniformly doubling, then a similar conclusion to that of Theorem \ref{struct} (3) holds. Their proof is very different and closer to Kleiner's approach to Gromov's theorem via harmonic functions, and requires the doubling assumption to hold at all scales. By contrast, our argument requires doubling only at one large scale (see Prop. \ref{doubling-structure}).
\end{remark}

\begin{remark} The proof of Theorem \ref{struct} $(2)$ shows, more generally, that for any integer $s \geq 1$ the group $G$ has a normal subgroup $H$ contained in $S^{[\gamma^{\frac{1}{s+1}+\delta}]}$ such that $G/H$ has an $s$-step nilpotent subgroup of index and rank bounded in terms of $\eps$ and $\delta$ only.
\end{remark}

We also prove the following result, which may be thought of as a converse to Theorem \ref{struct}.

\begin{prop}[Nilpotent groups are almost flat]\label{prop:ab.converse}
Let $G$ be a finite $s$-step nilpotent group generated by a symmetric subset $S$. There is a positive constant $\eps=\eps(s,|S|)$ depending only on $s$ and $|S|$ such that $\diam_S(G)\ge|G|^\eps$.
\end{prop}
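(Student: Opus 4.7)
The plan is a routine inversion of the polynomial growth bound for nilpotent groups. Specifically, the goal is to prove an estimate of the form $|S^n|\le C_0 n^D$ for constants $C_0=C_0(|S|,s)$ and $D=D(|S|,s)$, and then set $n=\gamma:=\diam_S(G)$ to read off a lower bound on $\gamma$.

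To establish the polynomial growth bound, let $x_1,\ldots,x_r$ enumerate $S\setminus\{1\}$; since $G$ itself is $s$-step nilpotent, the subgroup generated by the $x_i$ is at most $s$-step nilpotent, so the nilcomplete progressions $\overline P(x_1,\ldots,x_r;L)$ of Section~\ref{sec:coms.nilprogs} are defined. Directly from the definition one has $S\subset\overline P(x_1,\ldots,x_r;(1,\ldots,1))$, since the identity corresponds to taking all exponents $l_i=0$ and each $x_i^{\pm 1}$ corresponds to $l_i=\pm 1$ with the remaining $l_j=0$. Applying Proposition~\ref{powers}~(\ref{prop:dilates.2}) with $L=(1,\ldots,1)$ now gives
\[
S^n\subset\overline P(x_1,\ldots,x_r;(1,\ldots,1))^n\subset\overline P(x_1,\ldots,x_r;(n,\ldots,n))
\]
for every $n\ge 1$. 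Writing $\xi_1,\ldots,\xi_t$ for the list of generalised commutators of weight at most $s$ in the $x_i$, the definition of a nilcomplete progression yields the crude upper bound $|\overline P(x_1,\ldots,x_r;(n,\ldots,n))|\le\prod_{i=1}^t(2n^{|\chi(\xi_i)|}+1)$, and since $t$ and each $|\chi(\xi_i)|$ are bounded in terms of $r$ and $s$ alone, this collapses to $|S^n|\le C_0 n^D$ with $D=\sum_i|\chi(\xi_i)|$.

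Setting $n=\gamma$ and using $S^\gamma=G$, one obtains $|G|\le C_0\gamma^D$, hence $\gamma\ge(|G|/C_0)^{1/D}$. For $|G|$ sufficiently large in terms of $|S|$ and $s$ this is at least $|G|^{1/(2D)}$, while the bounded-$|G|$ regime is handled trivially by shrinking $\eps=\eps(|S|,s)>0$. The only nontrivial input is Proposition~\ref{powers}~(\ref{prop:dilates.2}), already established in Section~\ref{sec:coms.nilprogs}, so there is no real obstacle here; the argument is essentially the classical fact that finitely generated nilpotent groups have polynomial growth (Wolf, Bass, Guivarc'h), repackaged through the paper's nilcomplete progression formalism.
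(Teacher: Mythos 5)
Your argument is correct, but it takes a genuinely different route from the paper's. You establish the polynomial volume bound $|S^n|\le C_0 n^D$ directly, by embedding $S^n$ into the nilcomplete progression $\overline P(x_1,\ldots,x_r;(n,\ldots,n))$ via Proposition~\ref{powers}~(\ref{prop:dilates.2}) and then counting tuples of exponents; this is the Wolf--Bass polynomial growth bound repackaged through the paper's progression formalism, and it works because Proposition~\ref{powers}~(\ref{prop:dilates.2}) was already proved in Section~\ref{sec:coms.nilprogs}. The paper instead splits into two steps: it first proves the abelian case by the trivial count $|S^n|\le n^{|S|}$, and then reduces the general nilpotent case to the abelian one via Lemma~\ref{lem:ab.quotient}, which shows $|G/[G,G]|\ge|G|^{\Omega_{r,s}(1)}$ (using multilinearity of commutators, Lemma~\ref{multilin}); since the image of $S$ in $G/[G,G]$ has diameter at most $\gamma$, the abelian case applied to the quotient yields $\gamma\ge|G/[G,G]|^{1/|S|}\ge|G|^{\Omega_{r,s}(1)/|S|}$. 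Your approach buys a single uniform count with no structure theory, at the cost of invoking the full progression machinery; the paper's buys a lighter argument that avoids progressions entirely but needs the separate abelianization lemma. Both inherit the same mild edge-case issue for tiny $|G|$ (e.g.\ $\diam_S(G)=1$), which the paper also glosses over, so this is not a defect specific to your version.
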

\begin{remark}
The example of $\F_2^n$ with the standard generating set shows that the $\eps$ in Proposition \ref{prop:ab.converse} has to depend on $|S|$.
\end{remark}

\bigskip

We start the proof of Theorem \ref{struct} by translating the almost flat assumption into a doubling property.

\begin{lemma}[Almost flat groups are doubling at some scale]\label{lem:5adic.ph}
Let $\eps,\delta>0$ and set $K=5^{\frac{2}{\eps \delta}}$. Consider an $\eps$-almost flat Cayley graph of a finite group  $G$ with symmetric generating set $S$. Let $\gamma:=\diam_S(G)$ be its diameter. Then there is an integer $n$ satisfying $\gamma^{\delta/2} \leq n \leq \gamma^\delta$ such that
 $$|S^{5n}|\leq K|S^n|.$$
\end{lemma}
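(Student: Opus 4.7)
The plan is a pigeonhole argument along a geometric sequence of scales in the interval $[\gamma^{\delta/2}, \gamma^\delta]$. Suppose for contradiction that for every integer $n$ in this range we have $|S^{5n}| > K|S^n|$. I will iterate this failure along powers of $5$ and use the almost-flatness hypothesis $|G|/|S| \leq \gamma^{1/\eps}$ to produce a contradiction.

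First I would choose the geometric sequence. Let $j_0$ be the smallest non-negative integer with $5^{j_0} \geq \gamma^{\delta/2}$, let $J$ be the largest non-negative integer with $5^{j_0+J} \leq \gamma^\delta$, and set $n_j := 5^{j_0+j}$ for $j = 0, \ldots, J$. By construction each $n_j$ lies in $[\gamma^{\delta/2}, \gamma^\delta]$, so the contradiction hypothesis applies at every step of the sequence. A short calculation with logarithms (valid provided $\gamma$ is at least a constant depending on $\eps$ and $\delta$, otherwise the claim becomes trivial or vacuous) yields
\[
J+1 \;\geq\; \frac{\delta \log \gamma}{2\log 5}.
\]

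Second, iterating the failing doubling bound along the sequence and combining with the trivial estimates $|S^{n_0}| \geq |S|$ and $|S^{5n_J}| \leq |G|$ gives
\[
|G| \;\geq\; |S^{5n_J}| \;>\; K^{J+1}\, |S^{n_0}| \;\geq\; K^{J+1}\, |S|,
\]
so $K^{J+1} < |G|/|S|$. The $\eps$-almost flat hypothesis says $|G|/|S| \leq \gamma^{1/\eps}$, hence $K^{J+1} < \gamma^{1/\eps}$.

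Finally, substituting $K = 5^{2/(\eps\delta)}$ and the lower bound on $J+1$ yields
\[
K^{J+1} \;\geq\; 5^{\frac{2}{\eps\delta}\cdot\frac{\delta\log\gamma}{2\log 5}} \;=\; \gamma^{1/\eps},
\]
contradicting the previous inequality. The only real obstacle is accounting for integer rounding in the choice of $j_0$ and $J$; this is standard but needs to be done carefully, and in the edge cases where $\gamma$ is small the desired integer $n$ can be produced by inspection (e.g.\ $n = 1$ when the range $[\gamma^{\delta/2}, \gamma^\delta]$ is near $1$, noting that $|S^5| \leq |G| \leq |S|\cdot \gamma^{1/\eps} \leq K|S|$ once $\gamma^{1/\eps} \leq K$).
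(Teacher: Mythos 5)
Your strategy is exactly the paper's: contradiction plus a telescoping argument along a geometric sequence of scales with ratio $5$. But your specific choice of scales introduces a loss that is not, as you assert, just ``standard'' integer rounding; it genuinely breaks the final inequality.

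The issue is the claimed bound $J+1\geq\frac{\delta\log\gamma}{2\log 5}$. With your definitions ($j_0$ the least integer with $5^{j_0}\geq\gamma^{\delta/2}$, $J$ the largest with $5^{j_0+J}\leq\gamma^\delta$), rounding at both endpoints costs you up to a factor of $5$ in each direction, so what you actually get is $J+1>\frac{\delta\log\gamma}{2\log 5}-1$, not $J+1\geq\frac{\delta\log\gamma}{2\log 5}$. The missing additive $1$ converts, after exponentiating, into a missing multiplicative factor of $K=5^{2/(\eps\delta)}$ in the final comparison $K^{J+1}\geq\gamma^{1/\eps}$, and this gap does not disappear for large $\gamma$. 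Concretely, take $\eps=\delta=1$ and $\gamma=700$: then $\gamma^{\delta/2}\approx 26.5$ forces $j_0=3$ (since $5^2=25<26.5$), $\gamma^\delta=700$ gives $j_0+J=4$, so $J+1=2$ and $K^{J+1}=625<700=\gamma^{1/\eps}$. No contradiction is reached, and $\gamma$ is not ``small'' here in any useful sense.

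The fix is to abandon pure powers of $5$ and instead take $n_j=5^j n_0$ with $n_0=\lceil\gamma^{\delta/2}\rceil$ (this is what the paper does). The crucial additional trick in the paper's proof is to allow the top of the chain, $5^{k_0}n_0$, to overshoot $\gamma^\delta$ by a factor of up to $5$, while the selected scale $n=5^j n_0$ with $j\leq k_0-1$ still lands inside $[\gamma^{\delta/2},\gamma^\delta]$. With this choice, the number $k_0$ of steps satisfies $5^{k_0}\geq\gamma^{\delta/2}$ exactly (no lost factor), whence $K^{k_0}\geq\gamma^{1/\eps}\geq|G|/|S|$, and the pigeonhole closes. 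So: right idea, wrong geometric sequence; rework the choice of $n_0$ and $k_0$ along these lines and the argument goes through.
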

\begin{proof}
Let $k_0$ be the largest $k$ such that $5^k\gamma^{\delta/2} \leq 5\gamma^\delta$, and note therefore that $5^{k_0} \geq \gamma^{\delta/2}$.
The almost flat assumption therefore implies that
\[
|S^{5^{k_0}\gamma^{\delta/2}}|\le|G|\le|G||S^{\gamma^{\delta/2}}|/|S|\le\gamma^{1/\eps}|S^{\gamma^{\delta/2}}|=\left(\gamma^{\delta/2}\right)^{2/\delta\eps}|S^{\gamma^{\delta/2}}|\le5^{2k_0/\delta\eps}|S^{\gamma^{\delta/2}}|=K^{k_0}|S^{\gamma^{\delta/2}}|,
\]
from which the conclusion follows easily.
\end{proof}

%\begin{proof} By contradiction, if such an $n$ cannot be found, then $|S^{5n}|> K|S^n|$ for each $n \in [n_0,\gamma^\delta]$. Consequently $|S^{5^kn_0}|> K^k|S^{n_0}|$ for all $k$ such that $5^kn_0 \leq 5\gamma^\delta$. If $k_0$ is the largest such $k$, then $5^{k_0}n_0 \geq \gamma^\delta$ and hence
%$$|G| \geq |S^{5^{k_0}n_0}| > (\frac{\gamma^\delta}{n_0})^{\frac{2}{\eps \delta}} |S^{n_0}| \geq (\frac{\gamma^{\delta/2}}{n_0})^{\frac{2}{\eps \delta}} \gamma^\eps |S| \geq (\frac{\gamma^{\delta/2}}{n_0})^{\frac{2}{\eps \delta}}|G|,$$
%which implies that $\gamma^{\delta/2} < n_0$.
%\end{proof}

\begin{proof}[Proof of Theorem \ref{struct} (1)]  Let $K=5^{\frac{4}{\eps \delta}}$ and let $n_0=n_0(K)$ be the integer given by Proposition \ref{prop:v.nilp}, and take $D$ large enough to force $\gamma^{\delta/4}>n_0$. Lemma \ref{lem:5adic.ph} then implies that there exists $n$ with $n_0 \leq n \leq \gamma^{\delta/2}$ such that $|S^{5n}|\leq K|S^n|.$ The conclusion then follows from Proposition  \ref{prop:v.nilp}.
\end{proof}

We now pass to the proof of part $(2)$ of Theorem \ref{struct}. It relies on some structural results about finite nilpotent groups, and in particular about commutators and nilprogressions; we refer the reader to Section \ref{sec:coms.nilprogs} for background on these and definitions. We will also use the following standard fact, which follows from \cite[Proposition B.2]{tointon}, for example.
\begin{lemma}\label{multilin}
Let $\alpha$ be a \emph{commutator form of weight $k$} in the sense of \cite[Definition 3.2]{tointon}. Then the value of $\alpha(y_1,\ldots,y_k)$ mod $G_{k+1}$ is multilinear in the variables $y_1,\ldots,y_k$.
\end{lemma}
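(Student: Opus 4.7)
I would prove Lemma \ref{multilin} by induction on the weight $k$ of the commutator form, using the standard fact that taking the bracket is bilinear modulo the next term of the lower central series.

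The base case $k=1$ is immediate: a commutator form of weight $1$ is just a single variable $y_1$, which is trivially linear in $y_1$. For the inductive step, assume the result for all commutator forms of weight less than $k$, and write $\alpha = [\beta,\gamma]$ where $\beta$ is a commutator form of weight $k_1$ in the variables $\{y_i : i \in I\}$ and $\gamma$ of weight $k_2$ in $\{y_i : i \in I^c\}$, with $k_1+k_2 = k$ and $I \sqcup I^c = \{1,\ldots,k\}$. Fix a variable $y_i$; by symmetry it suffices to treat the case $i \in I$. Apply the inductive hypothesis to $\beta$ to write
\[
\beta(\ldots, y_iy_i', \ldots) = \beta(\ldots, y_i,\ldots)\,\beta(\ldots,y_i',\ldots)\, u
\]
with $u \in G_{k_1+1}$ and all other variables fixed. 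Then expand $\alpha(\ldots,y_iy_i',\ldots) = [\beta(\ldots,y_iy_i',\ldots),\gamma]$ using the commutator identities $[ab,c] = [a,c]\,[[a,c],b]\,[b,c]$ and its iterate.

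The key computation is that, modulo $G_{k+1}$, the ``error'' terms vanish. Indeed, $\gamma \in G_{k_2}$ and $\beta(\ldots,y_i,\ldots), \beta(\ldots,y_i',\ldots) \in G_{k_1}$, so any commutator involving $\gamma$ together with one of these and a further element of $G_{k_1} \cup G_{k_1+1}$ lies in $G_{2k_1+k_2} \subset G_{k+1}$ (since $k_1 \geq 1$); similarly $[u,\gamma] \in G_{k_1+1+k_2} = G_{k+2}$. Hence
\[
[\beta(\ldots,y_iy_i',\ldots),\gamma] \equiv [\beta(\ldots,y_i,\ldots),\gamma]\,[\beta(\ldots,y_i',\ldots),\gamma] \pmod{G_{k+1}},
\]
which is precisely $\alpha(\ldots,y_i,\ldots)\,\alpha(\ldots,y_i',\ldots)$ mod $G_{k+1}$, giving linearity in the $y_i$ slot. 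The case $i \in I^c$ is symmetric, using $[a,bc] \equiv [a,b][a,c] \pmod{G_{k+1}}$ by the same reasoning. Linearity in each variable separately is what multilinearity means here, so the induction closes.

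The argument is short and the only potential obstacle is the bookkeeping of which commutator error terms lie in which term of the lower central series; since this has been carefully organised in \cite[Appendix B]{tointon}, one could alternatively just quote \cite[Proposition B.2]{tointon} (as the authors do) which packages exactly this bilinearity-mod-$G_{k+1}$ identity, and then apply induction on the structure of $\alpha$ as above.
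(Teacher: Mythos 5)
Your proof is correct and self-contained; the paper itself gives no argument for this lemma and simply quotes \cite[Proposition B.2]{tointon}, so your induction fills in exactly the standard argument that the authors leave implicit. The structure (induction on the weight of the commutator form, using the identity $[ab,c]=[a,c]^b[b,c]$ and the inclusions $[G_i,G_j]\subset G_{i+j}$ to discard conjugation errors and cross terms) is the right one, and your bookkeeping of where each error term lands in the lower central series is sound. One small slip: you write $k_1+1+k_2 = k+2$, when in fact $k_1+1+k_2=k+1$; this is harmless since $G_{k+1}\subset G_{k+1}$ is all you need, but worth fixing. Your closing observation that one could instead just cite \cite[Proposition B.2]{tointon}, as the authors do, is accurate -- that proposition packages the weight-$2$ bilinearity, and the induction on the tree structure of $\alpha$ you describe is then the route to the full multilinear statement.
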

This means in particular that the value of $\alpha(y_1,\ldots,y_k)$ mod $G_{k+1}$ depends only on the values of $y_i$ mod $[G,G]$.

\begin{lemma}[Commutators in finite nilpotent groups are close to the identity] \label{small-commutator} Let $G$ be a finite $s$-step nilpotent group generated by the nilprogression $P=P^*(x_1,\ldots,x_r;L)$. Then the commutator subgroup $[G,G]$ is contained in $P^{O_{r,s}(\gamma^{1/2})}$, where $\gamma:=\diam_P(G)$.
\end{lemma}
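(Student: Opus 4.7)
The plan is to express an element of $[G,G]$ in basic commutator coordinates with each exponent bounded by $\gamma$, place it inside a nilpotent progression of scalar side length $\sqrt{2\gamma}$ in every direction, and translate back to a short product in $P$ via the inclusions \eqref{eq:nested.progs}.

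First I would bound the relevant orders. Since $x_j^{\lfloor\operatorname{ord}(x_j)/2\rfloor}$ must lie in the ball $G = P^{\gamma}$, each generator satisfies $\operatorname{ord}(x_j) \leq 2\gamma+1$. Lemma \ref{multilin} then shows that for every basic commutator $c$ of weight $w \geq 2$ having some $x_j$ among its entries, $c^{\operatorname{ord}(x_j)} \equiv 1 \pmod{G_{w+1}}$: indeed by multilinearity one can substitute $x_j^{\operatorname{ord}(x_j)}=1$ into the relevant slot of the commutator form, and any commutator with a $1$ in some slot is trivial. Hence the image of $c$ in the abelian quotient $G_w/G_{w+1}$ has order at most $2\gamma$.

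Next, for $g \in [G,G] = G_2$ I take any basic commutator representation $g = c_1^{l_1}\cdots c_t^{l_t}$ ordered by $\prec$; the weight-$1$ coefficients vanish automatically since $g \in G_2$. Iteratively, from $w=2$ up to $w=s$, reduce each weight-$w$ exponent modulo the image order of $c_i$ in $G_w/G_{w+1}$. This forces $|l_i| \leq \gamma$ at weight $w$, while the discrepancy lies in $G_{w+1}$ and only perturbs the strictly higher-weight coefficients; those are handled at the next iteration. After the process completes, every $|l_i| \leq \gamma$. Setting $M := \lceil\sqrt{2\gamma}\rceil$, one has $|l_i| \leq 2\gamma \leq M^{w_i}$ for $w_i \geq 2$ (and $|l_i|=0 \leq M$ for $w_i=1$), so that $g \in P(x_1,\ldots,x_r; M,\ldots,M)$.

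Finally, the inclusion chain \eqref{eq:nested.progs} gives $P(x; M,\ldots,M) \subset P_{\textup{ord}}(x; M,\ldots,M)^{r^{O_s(1)}}$, and each element of $P_{\textup{ord}}(x; M,\ldots,M)$ is an ordered word of length at most $rM$ in the $x_j^{\pm1}$, hence (assuming without loss of generality that each $L_j \geq 1$) belongs to $P^M$. Combining, $g \in P^{r^{O_s(1)}\cdot M} = P^{O_{r,s}(\sqrt{\gamma})}$, as required. The main technical subtlety is the iterative coordinate reduction: one must verify that processing the weights in increasing order is self-consistent, which follows from the fact that the reduction at weight $w$ perturbs only the tail in $G_{w+1}$, so that applying the order bound at weight $w+1$ to that perturbation allows the next step to proceed without disturbing the already-reduced weight-$w$ data.
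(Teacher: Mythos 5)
Your proof has a genuine gap at the very first step, and the error propagates through the rest of the argument. You claim that each generator satisfies $\operatorname{ord}(x_j) \leq 2\gamma+1$. This is false: the diameter $\gamma$ is computed with respect to the generating set $P = P^*(x_1,\ldots,x_r;L)$, and the side lengths $L_i$ can be arbitrarily large. For instance, take $G = \Z/N\Z$ with $x_1 = 1$ and $L_1 = 10$: then $P = \{-10,\ldots,10\}$, so $\gamma \approx N/20$ while $\operatorname{ord}(x_1) = N \approx 20\gamma$. Even with all $L_i = 1$ the claim fails (e.g., $G=\Z/N\Z$ with two generators $x_1=1$, $x_2=2$ gives $\gamma\approx N/6$ but $\operatorname{ord}(x_1)=N$). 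The more relevant consequence, your bound $|l_i| \le 2\gamma$ on the order of each basic commutator in $G_w/G_{w+1}$, also fails: in the Heisenberg group over $\Z/N\Z$ with $P = P^*(a,b;L,L)$, one has $\gamma \sim N/L$ while $\operatorname{ord}([a,b]) = N \sim L\gamma$, so the weight-$2$ coordinate can be as large as $L\gamma$, not $O(\gamma)$. This means the inclusion $g \in P(x_1,\ldots,x_r;M,\ldots,M)$ with $M = \lceil\sqrt{2\gamma}\rceil$ is unjustified.

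The underlying mistake is replacing the side-length vector $L$ by a scalar $M$. The paper's argument is careful to keep the $L_i$ in play: from $x_{j_1}^n \in P^\gamma$ one gets $x_{j_1}^n = x_1^{\ell_1}\cdots x_r^{\ell_r}$ mod $[G,G]$ with $|\ell_i| \le L_i\gamma$ (not $\le\gamma$), and then, after expanding $\alpha(x_{j_1}^n,\ldots)$ by multilinearity, the resulting powers $\alpha(x_i,x_{j_2},\ldots,x_{j_s})^{\ell_i}$ land in $\overline{P}(x_1,\ldots,x_r;\lceil\gamma^{1/s}\rceil L)$ precisely because $L_i\gamma \le (\lceil\gamma^{1/s}\rceil L)^{\chi(\alpha)}$ when the weight of $\alpha$ is $s$ — the extra $L$-factors in $L^{\chi(\alpha)}$ are what absorb the $L_i$ appearing in the bound on $\ell_i$. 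Your scalar progression $P(x;M,\ldots,M)$ throws those factors away. (The paper also descends from $[G,G]$ to $G_s$ by induction on $s$ and then treats a single weight-$s$ commutator at a time, rather than manipulating a full basic-commutator coordinate vector at once; but that is a stylistic difference, whereas the missing $L$-dependence in your coordinate bounds is what breaks the argument.)
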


\begin{proof}By induction on $s$, it is enough to prove that $G_s$, the last non-trivial term in the lower central series of $G$, is contained in $P^{O_{r,s}(\gamma^{1/2})}$. In fact, since $G_s$ is an abelian group generated by the basic commutators of weight $s$ in the $x_i$, of which there are at most $O_{r,s}(1)$, it suffices to prove that every power of each such commutator is contained in $P^{O_{r,s}(\gamma^{1/2})}$. However, Lemma \ref{multilin} means that if $\alpha(x_{j_1},...,x_{j_s})$ is a commutator of weight $s$ in the $x_i$ then $\alpha(x_{j_1},...,x_{j_s})^n=\alpha(x_{j_1}^n,x_{j_2},...,x_{j_s})$. We have $x_{j_1}^n\in P^\gamma$ by definition, and so there are $\ell_1,\ldots,\ell_r$ such that $|\ell_i|\le L_i\gamma$ and such that $x_i^n=x_1^{\ell_1}\cdots x_r^{\ell_r}$ (mod $[G,G]$), and hence that $\alpha(x_{j_1}^n,x_{j_2},...,x_{j_s})=\prod_{i=1}^r\alpha(x_i,x_{j_2},...,x_{j_s})^{\ell_i}$, again by Lemma \ref{multilin}. It will therefore suffice to show that each $\alpha(x_i,x_{j_2},...,x_{j_s})^{\ell_i}$ with $|\ell_i|\le L_i\gamma$ belongs to $P^{O_{r,s}(\gamma^{1/2})}$. However, $\alpha(x_i,x_{j_2},...,x_{j_s})^{\ell_i}\in\overline P(x_1,\ldots,x_r;\lceil\gamma^{1/s}\rceil L)$ by definition, and this last set is contained in $P^{O_{r,s}(\gamma^{1/s})}$ by the proof of Proposition \ref{powers} \eqref{prop:dilates.1}, since in the notation of that proof we have $B(L)\subset P^*(L)$. The lemma is therefore proved.
%Let $x_1,...,x_r$ be the generators of the nilprogression $S:=P^*(x_1,...,x_r; L)$. Note that $G/[G,G]$ has rank at most $r$ and is generated by the $x_i$ mod $[G,G]$. In particular $G_s$ is an abelian group generated by the basic commutators $c(x_1,...,x_r)$ of weight $s$, of which there are at most $O_{r,s}(1)$. We refer the reader to Section \ref{sec:coms.nilprogs} for background on nilprogressions and basic commutators. The basic commutators of weight $s$ depend only on the values modulo $[G,G]$ and behave multilinearly in the variables $x_i$ mod $[G,G]$. In particular for every integer $n$, we have $c(x_1,...,x_r)^n=c(x_1^n,x_2,...,x_r)$. But $x_1^n \in S^{\gamma}$, and hence there are non-negative integers $n_i\leq \gamma$ such that $x_1^n$ coincides with the product of the $x_i^{\pm n_i}$ modulo $[G,G]$. Hence each $c(x_1,...,x_r)^n$ is a product of at most $r$ terms of the form $c'(x_1,...,x_r)^{\pm m}$ for some commutator $c'$ of weight $s$ and some integer $m \leq \gamma$. Now note that if $m \leq \gamma$, then $m=ab+r$ for some integers $a,b,r$ all less than $\gamma^{1/2}+1$. Hence, once again, $c'(x_1,...,x_r)^{ab} = c'(x_1^{a},x_2^{b},x_3,...,x_r)\in S^{O_{r,s}(\gamma^{1/2})}$ and the result follows.
\end{proof}
\begin{remark}
Essentially the same proof shows that the same result holds if $P$ is a nilpotent progression or nilcomplete progression.
\end{remark}

\begin{proof}[Proof of Theorem \ref{struct} (2)]  By part (1) applied to $\delta=\frac{1}{3}$, we may pass to a quotient and assume that $G$ has a normal nilpotent subgroup $\Gamma$ whose index, class and rank are bounded in terms of $\eps$ only. Indeed, the ratio $|G|/|S|$ does not increase when passing to a group quotient, and the diameter of the quotient will be at least $\gamma-\gamma^{1/3}$. Lemma \ref{lem:rei-schr} then gives a generating set $S_0\subset S^{O_\eps(1)}$ for $\Gamma$ such that $\diam_S(\Gamma)$ is comparable to $\gamma$ and such that $(\Gamma,S_0)$ is $\eps'$-almost flat for some positive $\eps'$ depending on $\eps$ only. Arguing as in the proof of Theorem \ref{struct} (1), and using Lemma \ref{lem:sm.doub.app.gp}, we see that there exists $n\le O_{\eps,\delta}(\gamma^{\delta})$ such that $S_0^n$ is an $O_{\eps,\delta}(1)$-approximate subgroup. By Theorem \ref{thm:nilp}, therefore, and possibly after passing to a further quotient, there exists a nilprogression $P$ of bounded rank such that $S_0\subset P\subset S_0^{O_{\eps,\delta}(\gamma^{\delta})}$, and hence that $\diam_P(\Gamma)\ge\Omega_{\eps,\delta}(\gamma^{1-\delta})$. Lemma \ref{small-commutator} therefore implies and we conclude that $[\Gamma,\Gamma] \subset P^{O_{\eps}(\gamma^{1/2})}$, and hence $[\Gamma,\Gamma] \subset S^{O_{\eps,\delta}(\gamma^{1/2+\delta})}$. The subgroup $H:=[\Gamma,\Gamma]$ is characteristic in $\Gamma$, and hence normal in $G$, and the quotient $G/H$ has the desired properties.
\end{proof}

\begin{lemma}\label{lem:ab.quotient}
Let $G$ be a finite nilpotent group of rank $r$ and step $s$. Then
\[
|G/[G,G]|\ge|G|^{\Omega_{r,s}(1)}.
\]
\end{lemma}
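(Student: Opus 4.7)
The plan is to set $N := |G/[G,G]|$ and to bound $|G|$ from above by $N^{O_{r,s}(1)}$, which on rearrangement gives the claim. I would do this by estimating each successive quotient of the lower central series
\[
G = G_1 \supseteq G_2 \supseteq \cdots \supseteq G_{s+1} = 1
\]
and multiplying; concretely, the target is $|G_k/G_{k+1}| \le N^{O_{r,s}(1)}$ for each $k \in \{1,\ldots,s\}$, from which $|G| = \prod_k |G_k/G_{k+1}| \le N^{O_{r,s}(1)}$ follows.

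The key step is to show that $G_k/G_{k+1}$ has exponent dividing $N$. First I would fix a generating set $x_1,\ldots,x_r$ of $G$ witnessing the rank. Since $|G/[G,G]| = N$, each $x_i^N$ lies in $[G,G]$. Now, if $\alpha$ is a commutator form of weight $k$ in the $x_i$, Lemma \ref{multilin} (together with its remark that $\alpha(\cdot)$ mod $G_{k+1}$ depends only on its arguments mod $[G,G]$) gives
\[
\alpha(x_{j_1},\ldots,x_{j_k})^N \equiv \alpha(x_{j_1}^N, x_{j_2},\ldots,x_{j_k}) \equiv \alpha(1, x_{j_2},\ldots,x_{j_k}) \equiv 1 \pmod{G_{k+1}},
\]
since any weight-$k$ commutator form with one trivial argument evaluates trivially (one also sees this directly from $[G_a, G_b] \subseteq G_{a+b}$, which places $\alpha(x_{j_1}^N, x_{j_2},\ldots,x_{j_k})$ into $G_{k+1}$ because $x_{j_1}^N \in G_2$). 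Hence every basic commutator of weight $k$ in the $x_i$ has order dividing $N$ in the abelian group $G_k/G_{k+1}$.

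To conclude, I would combine this exponent bound with the standard fact, coming from the commutator collecting process, that $G_k/G_{k+1}$ is generated as an abelian group by the images of the basic commutators of weight $k$ in $x_1,\ldots,x_r$, of which there are at most $r^k \le r^s = O_{r,s}(1)$. An abelian group on $O_{r,s}(1)$ generators, each of order dividing $N$, has cardinality at most $N^{O_{r,s}(1)}$, and multiplying across $k = 1,\ldots,s$ yields $|G| \le N^{O_{r,s}(1)}$ as desired. I do not foresee a real obstacle here; the only point requiring a moment of care is that Lemma \ref{multilin} is stated for commutator \emph{forms} rather than for specific evaluated basic commutators, but this is precisely what the substitution argument above leverages.
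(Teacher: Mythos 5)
Your argument is correct and follows essentially the same route as the paper's: decompose $|G|$ along the lower central series, use Lemma~\ref{multilin} to bound the order of each weight-$k$ basic commutator in $G_k/G_{k+1}$, and multiply across the $O_{r,s}(1)$ basic commutators. The only cosmetic difference is that the paper works with $d$, the maximum order among the $x_i[G,G]$, whereas you use $N=|G/[G,G]|$ directly via Lagrange; since $d\le N$, both give the same $\Omega_{r,s}(1)$ exponent.
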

\begin{proof}
We denote by $G=G_0=G_1\ge G_2\ge\ldots\ge G_s\ge G_{s+1}=\{1\}$ the lower central series of $G$. Observe that
\begin{equation}\label{eq:lcs}
|G_k|=\prod_{i=k}^s|G_i/G_{i+1}|,
\end{equation}
that $G_k/G_{k+1}$ is abelian and generated by the basic commutators in the $x_i$ of total weight $k$ (see Section \ref{sec:coms.nilprogs} or \cite{hall}). Let $x_1,\ldots,x_r$ be a generating set for $G$. Note that the group $G/[G,G]$ is generated by the elements $x_i[G,G]$. Write $d$ for the order of the element $x_i[G,G]$ of maximum order in $G/[G,G]$, and note that
$|G/[G,G]|\ge d$. It follows from Lemma \ref{multilin} that the order in the group $G_k/G_{k+1}$ of a commutator of total weight $k$ featuring generators $x_{i_1},\ldots,x_{i_k}$ is at most $d$. Therefore, writing $b(r,k)$ for the number of basic commutators of total weight $k$ in $r$ generators we have
\[
|G_{k-1}/G_k|\le d^{b(r,k)}.
\]
In particular, writing $c(r,s)$ for the number of basic commutators of total weight at most $s$ in $r$ generators, (\ref{eq:lcs}) implies that
$|[G,G]|\le d^{c(r,s)}$. The result follows.
\end{proof}

\begin{proof}[Proof of Proposition \ref{prop:ab.converse}] Let $\gamma:=\diam_S(G)$. Suppose first that $G$ is abelian. Then $|S^n| \leq n^{|S|}$, and hence $\gamma \geq |G|^{1/|S|}$. If $G$ is nilpotent of class $s$, then Lemma \ref{lem:ab.quotient} shows that $|G| \leq |G/[G,G]|^{C_{|S|,s}} \leq \gamma^{|S|C_{|S|,s}}$ as desired.
\end{proof}

We now pass to the proof of the part $(3)$ of Theorem \ref{struct}. For this we require the following fact about flat tori.

\begin{lemma}[Euclidean tori have large one-dimensional quotients]\label{torus-quotient} Let $\Delta$ be a discrete subgroup of rank $n$ in $\R^n$ and consider the torus $T:=\R^n/\Delta$, endowed with the standard Euclidean metric. Then there is a one-dimensional quotient torus with diameter at least $\frac{\pi}{2nn!}$ times the diameter of $T$.
\end{lemma}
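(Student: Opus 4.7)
One-dimensional quotient tori of $T=\R^n/\Delta$ are in bijection with primitive vectors of the dual lattice $\Delta^{*}=\{u\in\R^n:\langle u,v\rangle\in\Z\text{ for all }v\in\Delta\}$: to such a $u$ one associates the $1$-Lipschitz surjective group homomorphism $\phi_u:T\to\R/\tfrac{1}{\|u\|}\Z$ given by $\phi_u(x)=\langle u,x\rangle/\|u\|$, whose target is a circle of diameter $\tfrac{1}{2\|u\|}$. Since the diameter of $T$ with the induced Euclidean metric equals the covering radius $\mu(\Delta):=\sup_{x\in\R^n}\inf_{v\in\Delta}\|x-v\|$, the lemma reduces to exhibiting a primitive $u\in\Delta^{*}$ satisfying $\|u\|\,\mu(\Delta)\le nn!/\pi$.

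My plan is to take $u$ to be a shortest nonzero vector in $\Delta^{*}$, which is automatically primitive (else some $u/k$ would be shorter); then $\|u\|=\lambda_1(\Delta^{*})$ is the first successive minimum. The bound $\lambda_1(\Delta^{*})\,\mu(\Delta)\le nn!/\pi$ is then a transference inequality between $\Delta$ and its dual, which I would derive from three classical ingredients of the geometry of numbers: (i) Minkowski's second theorem applied to $\Delta$, $\prod_i\lambda_i(\Delta)\le\frac{2^n}{\omega_n}\det(\Delta)$, in which the volume $\omega_n=\pi^{n/2}/\Gamma(n/2+1)$ of the Euclidean unit ball is precisely where the factor $\pi$ enters; (ii) a transference estimate of Khinchin--Mahler type, $\lambda_1(\Delta^{*})\,\lambda_n(\Delta)\le n!$, obtained by a polar-duality argument; and (iii) the elementary upper bound $\mu(\Delta)\le\tfrac{1}{2}\sum_i\lambda_i(\Delta)\le\tfrac{n}{2}\lambda_n(\Delta)$, obtained by expanding an arbitrary point of $\R^n$ in a basis realising the successive minima and rounding coefficients.

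Combining (ii) and (iii) already gives $\lambda_1(\Delta^{*})\,\mu(\Delta)\le nn!/2$, enough for the lemma up to the specific numerical constant. Sharpening by tracking the volume of the ball in (i) through the duality step produces the precise constant $\pi/(2nn!)$ claimed. The main obstacle will be the careful bookkeeping around these sharp constants -- in particular, making sure that the $\pi$ coming from the ball volume survives the combination of (i)--(iii) rather than being absorbed by a cruder estimate -- but the overall structure of the argument is entirely classical and really amounts to choosing which transference theorem to invoke with its optimal constant.
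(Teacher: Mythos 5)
Your dual-lattice framing is valid and the reduction is correct: one-dimensional quotients of $T$ do correspond to primitive $u\in\Delta^{*}$, such a quotient is a circle of diameter $\tfrac{1}{2\|u\|}$, $\diam(T)=\mu(\Delta)$, and the lemma reduces to exhibiting a primitive $u$ with $\|u\|\mu(\Delta)\le nn!/\pi$. This is a genuinely different route from the paper's, which never passes to $\Delta^{*}$: the paper fixes a reduced basis $v_1,\ldots,v_n$, sets $H=\mathrm{span}(v_1,\ldots,v_{n-1})$, and quotients by $H+\Delta$ -- in your language, it chooses $u=v_n^{*}$, the last \emph{dual basis} vector, not a \emph{shortest} dual vector -- then applies the lower bound of Minkowski's second theorem to $\Delta$ and the upper bound to the hyperplane sublattice $\Delta\cap H$. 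The $\pi$ enters via the ratio of consecutive ball volumes $\omega_{n-1}/\omega_n\le n/\pi$; no transference between $\Delta$ and $\Delta^{*}$ is ever invoked.

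The gap is in your claimed sharpening. Mahler's bound $\lambda_1(\Delta^{*})\lambda_n(\Delta)\le n!$ together with $\mu(\Delta)\le\tfrac{n}{2}\lambda_n(\Delta)$ gives $\lambda_1(\Delta^{*})\mu(\Delta)\le nn!/2$, which is \emph{weaker} than the required $nn!/\pi$ since $\pi>2$. Tracking the ball volume through the polar-duality step -- taking Minkowski's upper bound on both $\Delta$ and $\Delta^{*}$ and dividing by $\prod_{j\neq n}\lambda_j(\Delta)\lambda_{n-j+1}(\Delta^{*})\ge1$ -- does give the sharper $\lambda_1(\Delta^{*})\lambda_n(\Delta)\le 4^n/\omega_n^2$, but $\tfrac{n}{2}\cdot 4^n/\omega_n^2$ is still not $\le nn!/\pi$ for every $n$: at $n=2$ it equals $16/\pi^2\approx1.62$, exceeding $4/\pi\approx1.27$. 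So your route proves the lemma only with the constant $1/(nn!)$ in place of $\pi/(2nn!)$. That weaker constant would in fact suffice for every application in the paper, since the lemma only feeds into the unspecified constants $c_\varepsilon$, $C_\varepsilon$ in the proof of Theorem~\ref{struct}, part (3), but the proposal as written does not establish the lemma as stated. To recover the precise constant you would need to take $u=v_n^{*}$ rather than a shortest dual vector, at which point the argument collapses to the paper's direct comparison of $\det\Delta$ with $\det(\Delta\cap H)$.
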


\begin{proof} Let $v_1,...,v_n$ be a basis of successive minima of $\Delta$, with $\|v_1\| \leq ... \leq \|v_n\|$. Let $H$ be the $\R$-span of $v_1,...,v_{n-1}$. Let $T_0$ be the quotient torus $(\R^n/H)/(\Delta \cap H)$. The diameter of $T_0$ is the supremum over all $x \in \R^n$ of the distance from $x$ to $H+\Delta$. This is half the distance from $v_n$ to $H$. On the other hand the diameter $\diam(T)$ of $T$ is the supremum  $\sup_{x  \in \R^n}\inf_{\delta \in \Delta} \|x-\delta\|$, which is at most $\frac{1}{2}\sum_1^n\|v_i\| \leq \frac{n}{2}\|v_n\|$.
On the other hand, it follows from Minkowski's second theorem that if $V_n$ denotes the volume of the Euclidean $n$-ball, $$2^n/n!\leq \frac{V_n \|v_1\|...\|v_n\|}{\|v_1 \wedge ... \wedge v_n \|} \leq 2^n$$ and similarly for $v_1 \wedge ... \wedge v_{n-1}$. Since $\|v_1 \wedge ... \wedge v_{n-1}\|d(v_n,H)=\|v_1 \wedge ... \wedge v_{n}\|$, we conclude that $\|v_n\|\leq 2(n-1)! d(v_n,H) V_{n-1}/V_n$. On the other hand, it is well known that $V_{n-1}/V_n \leq n/\pi$, and so the result follows.
\end{proof}

\begin{proof}[Proof of Theorem \ref{struct} (3)] Pick $\delta=1/6$. By part $(2)$ of the theorem there is a normal subgroup $H$ contained in $S^{O_{\eps}(\gamma^{2/3})}$ such that $G/H$ has an abelian subgroup of rank and index bounded in terms of $\eps$ only. Applying Lemma \ref{lem:rei-schr}, we may pass to a subgroup of bounded index and assume that $G/H$ is abelian of bounded rank. Lemma \ref{lem:5adic.ph} implies that, provided $\gamma$ is larger than some constant depending on $\eps$ only, there is some $n_1 \leq \gamma^{1/3}$ such that $S^{2n_1}$ is an $O_{\eps}(1)$-approximate subgroup. Its image $\overline{S}^{2n_1}$ in $G/H$ is therefore also an $O_\eps(1)$-approximate group, and so there is a subgroup $H_0\in\overline S^{8n_1}$ and a generalised arithmetic progression $P=P(v_1,...,v_r;L)$ of bounded rank such that $S^{2n_1} \subset H_0P \subset S^{O_\eps(n_1)}$ \cite{green-ruzsa}. On replacing $H$ by the pullback of $H_0$ to $G$, we may in fact assume that $H_0$ is trivial. Note that if $\gamma=\diam_S(G)$ is larger than a constant depending on $\eps$ only, $$2n_1\diam_{P}(G/H) \leq\diam_{\overline{S}}(G/H)\leq O_\eps(n_1\diam_{P}(G/H)).$$
The same comparison holds for the diameter of every group quotient. The generators $v_1,...,v_r$ allow us to write $G/H$ as $\Z^r/\Delta$ for some lattice $\Delta$; thus, we write $x\in G/H$ in coordinate form as $x=(x_1,\ldots,x_r)$. Since $G/H$ is abelian, $P^n=P(v_1,...,v_r;nL)$ for every $n$. Let $\|\,\cdot\,\|$ be the norm on $\R^r$ defined by $\|x\|:=\max_1^r |x_i/L_i|$, and define $\ell(x):=\max_1^r \lceil |x_i/L_i| \rceil$. Note that $\|x\| \leq \ell(x) \leq \|x\| +1$ and that
$$\diam_{P}(G/H) = \sup_{x \in \Z^r} \inf_{\delta \in \Delta} \ell(x-\delta).$$
We may compare this diameter with the diameter of the Euclidean torus $T:=\R^r/\Delta_L$, where $\Delta_L$ is the lattice of all vectors of the form $(x_1/L_1,...,x_r/L_r)$ with $(x_1,...,x_r)\in\Delta$. Indeed,
$$\diam_{P}(G/H) \leq 1+ \sup_{x \in \Z^r} \inf_{\delta \in \Delta} \|x-\delta\| \leq 2+ \sup_{x \in \R^r} \inf_{\delta \in \Delta} \|x-\delta\|,$$
where we have used that $\inf_{y \in \Z^r} \|x-y\|\leq 1$ for all $x \in \R^r$, because $L_i \geq 1$. Hence
$$\diam_{P}(G/H) \leq  2+ \sup_{y \in \R^r} \inf_{\delta \in \Delta_L} \|y-\delta\|_\infty \leq 2+ \sup_{y \in \R^r} \inf_{\delta \in \Delta_L} \|y-\delta\|_2 = 2+ \diam(\R^r/\Delta_L),$$
where $\|\cdot\|_2$ is the standard Euclidean norm on $\R^r$. Lemma \ref{torus-quotient} implies that there is a hyperplane $\mathcal{H}_L$ in $\R^r$ intersecting $\Delta_L$ in a lattice such that the diameter of the one-dimensional torus quotient $\R^r/(\mathcal{H}_L+\Delta_L)$ satisfies $$\diam(\R^r/\Delta_L) \leq \frac{2rr!}{\pi}\diam(\R^r/(\mathcal{H}_L+\Delta_L)).$$
Moreover,
\[
\diam(\R^r/(\mathcal{H}_L+\Delta_L))= \sup_{y \in \R^r}\inf_{\delta\in\mathcal{H}_L+\Delta_L} \|y-\delta\|_2\leq  \sqrt{r} \sup_{y \in \R^r} \inf_{\delta \in \mathcal{H}_L+\Delta_L} \|y-\delta\|_\infty  = \sqrt{r} \sup_{x \in \R^r} \inf_{\delta \in \mathcal{H}+\Delta} \|x-\delta\|,
\]
where $\mathcal{H}=\{(L_1x_1,...,L_rx_r); (x_1,...,x_r) \in \mathcal{H}_L\}$. Writing $\mathcal{H}_\Z:=\mathcal{H}\cap \Z^r$, we then have
$$\sup_{x \in \R^r} \inf_{\delta \in \mathcal{H}+\Delta} \|x-\delta\| \leq 1+ \sup_{x \in \Z^r} \inf_{\delta \in \mathcal{H}_\Z+\Delta} \|x-\delta\| \leq 1+ \sup_{x \in \Z^r} \inf_{\delta \in \mathcal{H}_\Z+\Delta} \ell(x-\delta).$$
However, $\sup_{x \in \Z^r} \inf_{\delta \in \mathcal{H}_\Z+\Delta} \ell(x-\delta)$ is precisely the diameter of the cyclic quotient $Q:=\Z^r/(\mathcal{H}_\Z+\Delta)$ with respect to the image of $P$, and so, combining all of the above, we obtain
$$\diam_{\overline S}(G/H) \leq O_\eps(n_1 \diam_P(G/H)) \leq O_\eps( n_1 \diam_{\overline{P}}(Q)) \leq O_\eps( \diam_{\overline{S}}(Q)).$$
\end{proof}

This ends the proof of Theorem \ref{struct}.

\bigskip

We now record the following simple consequence of Part $(1)$ of Theorem \ref{struct}.

\begin{corollary}[Almost flatness is a group property]\label{cor:intrinsic}
Let $\eps>0$ and suppose that $G$ is a finite group with a symmetric generating set $S$ with respect to which $G$ is $\eps$-almost flat. Suppose that $E$ is some other symmetric generating set for $G$. Then  there is $\eps'=\eps'(\eps,|E|)>0$ and $c_\eps>0$ depending on $\eps$ only such that $\diam_E(G)\geq  c_\eps |G|^{\eps'}$.
\end{corollary}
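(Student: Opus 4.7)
The plan is to apply Theorem \ref{struct}~(3) to $(G,S)$ in order to extract from the almost-flatness hypothesis an intrinsic algebraic feature of $G$ --- a cyclic quotient of a bounded-index subgroup whose order is polynomially large in $|G|$ --- and then transfer this to a lower bound on $\diam_E(G)$ via the Reidemeister--Schreier process. Write $\gamma := \diam_S(G) \ge (|G|/|S|)^\eps$. I may assume $\gamma$ exceeds the constant $D_\eps$ required by Theorem \ref{struct}~(3), since otherwise $|G|$ is bounded in terms of $\eps$ and the conclusion is vacuous upon choosing $c_\eps$ small. Theorem \ref{struct}~(3) then provides a normal subgroup $G_0 \triangleleft G$ of index $d \le C_\eps$ together with a surjection $\pi \colon G_0 \twoheadrightarrow Q$ onto a cyclic group $Q$ whose diameter with respect to the Reidemeister--Schreier generators derived from $S$ is at least $c_\eps\gamma$; in particular $|Q| \ge 2c_\eps\gamma$.

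Next I would apply Lemma \ref{lem:rei-schr} to the inclusion $G_0 \le G$, but using $E$ (rather than $S$) as the ambient generating set. This yields a symmetric generating set $E_0 \subset E^{2d-1}$ of $G_0$ satisfying $|E_0| \le d|E| \le C_\eps|E|$ and $\diam_{E_0}(G_0) \le \diam_E(G)$. Pushing forward under $\pi$ produces a symmetric generating set $\bar E_0 := \pi(E_0)$ of the cyclic group $Q$ of size at most $C_\eps|E|$. An elementary observation for cyclic groups --- the ball of radius $n$ under a symmetric generating set of size $k$ has cardinality at most $(2n+1)^k$, since every such element is of the form $\sum_{i=1}^k n_i t_i$ with $|n_i| \le n$ --- gives $\diam_{\bar E_0}(Q) \ge \tfrac12(|Q|^{1/(C_\eps|E|)} - 1)$. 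Chaining these three inequalities yields
\[
\diam_E(G) \;\ge\; \diam_{E_0}(G_0) \;\ge\; \diam_{\bar E_0}(Q) \;\ge\; c'_\eps\,(|G|/|S|)^{\eps/(C_\eps|E|)}.
\]

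Setting $\eps' := \eps/(C_\eps|E|)$, which depends only on $\eps$ and $|E|$, recovers the claimed exponent. The main (and essentially only) obstacle is the parasitic factor of $|S|^{-\eps'}$ appearing in the bound: under the standing convention made explicit in the introduction, namely that the size of the generating set remains bounded, this factor is a constant depending on $\eps$ and can be absorbed into $c_\eps$. Without such a bound on $|S|$, the same argument still gives the conclusion on the range $|S| \le |G|^{1-\eta}$ for any fixed $\eta>0$, at the cost of rescaling $\eps'$ by a factor of $\eta$; for $|S|$ of size comparable to $|G|$ the hypothesis itself becomes vacuous.
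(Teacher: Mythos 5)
Your proposal is correct and tracks the paper's proof closely in its overall architecture: both arguments extract from $(G,S)$ being $\eps$-almost flat an intrinsic algebraic feature of $G$ (via Theorem~\ref{struct}), then transfer it to a lower bound on $\diam_E(G)$ through Lemma~\ref{lem:rei-schr} and an elementary polynomial-diameter bound for (virtually) nilpotent groups. The difference is in the specific part of Theorem~\ref{struct} invoked and the order of the subgroup/quotient passages. The paper applies part~(1) to get a quotient $G/H$ of comparable diameter containing a nilpotent subgroup $N$ of bounded rank, index and class; it passes to $N$ via Reidemeister--Schreier and then applies Proposition~\ref{prop:ab.converse} to conclude $\diam_{E_0}(N)\ge|N|^{\eps_1}$. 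You instead apply part~(3) to get a bounded-index subgroup $G_0$ with a cyclic quotient $Q$ of large diameter, pass to $G_0$ via Reidemeister--Schreier, project to $Q$, and finish with the elementary abelian growth bound. Since Theorem~\ref{struct}~(3) is proved in the paper using parts~(1) and~(2) plus the Minkowski/torus argument, your route is not logically lighter, but the final step is more elementary (no need for the lower central series argument of Lemma~\ref{lem:ab.quotient} underlying Proposition~\ref{prop:ab.converse}); both are entirely valid.

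Two small points. First, the line $|Q|\ge 2c_\eps\gamma$ needs a word: the trivial bound $|Q|>\diam(Q)\ge c_\eps\gamma$ already suffices, and is cleaner than invoking $\diam(Q)\le|Q|/2$ (which is true for finite cyclic groups with a symmetric generating set containing the identity, but is a slightly finicky thing to assert without proof). Second, and more importantly, you correctly flag the parasitic $|S|^{-\eps'}$ factor. This is a genuine feature of the argument (the paper's terse proof has exactly the same factor, since $|N|\ge c(|G|/|S|)^\eps$ there too), and the corollary as literally stated --- with $c_\eps$ depending only on $\eps$ --- is only correct under the implicit convention mentioned in the introduction that $|S|$ is bounded. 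Without some such restriction the statement is false: take $S=G$, so the almost-flat condition degenerates to $1\ge 1$, yet $\diam_E(G)$ for a bounded $E$ certainly need not be polynomial in $|G|$. Your explicit handling of this point is a welcome precision that the paper glosses over.
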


\begin{proof} By Theorem \ref{struct} $G$ has a quotient $Q$ of comparable diameter containing a nilpotent subgroup of bounded rank, index and nilpotency class, and so the conclusion follows from Proposition \ref{prop:ab.converse} and Lemma \ref{lem:rei-schr}.
\end{proof}
\begin{remark} We caution that, in spite of Corollary \ref{cor:intrinsic}, for every $\eps>0$ there exists $C=C_\eps$ such that every finite group $G$ satisfying $|G|\ge C$ has at least one Cayley graph that is $\eps$-almost flat and at least one Cayley graph that is not $\eps$-almost flat. Indeed, taking $S=G\backslash\{x,x^{-1}\}$ for some $x\in G$ gives an almost flat Cayley graph. On the other hand, as is well known and easy to prove, every finite group $G$ of size at most $2^n$ has a generating set of size at most $n$ and diameter at most $n$.
\end{remark}

\subsection{Explicit bounds, the polynomial Freiman-Ruzsa conjecture and an example}\label{sharpness}
The only source of ineffectiveness in our arguments comes from Theorem \ref{thm:bgt}, the structural result for approximate subgroups of arbitrary groups proved in \cite{bgt}. Any explicit bound in terms of $K$ on the number of translates of $\Gamma$ needed to cover the $K$-approximate subgroup $A$ would yield to an explicit bound on $n_0(K)$ in Theorem \ref{thm:scales} and on the index of the abelian subgroup in Theorem \ref{thm:gromovfinite}. In this subsection we present an example to show that
\begin{enumerate}
\item Theorem \ref{thm:gromovfinite} is sharp in the sense that the power $\eps$ cannot be allowed to tend to zero without losing the conclusion that the index of the abelian subgroup remains bounded; and
\item the bound on the number of translates required in Theorem \ref{thm:bgt}, and hence the quantity $n_0(K)$ in Theorem \ref{thm:scales}, is worse than polynomial in $K$.
\end{enumerate}

We give more precise versions of these statements below as Facts \ref{fact1} and \ref{fact2}, as well as giving more detail on the context in which they should be viewed. However, first we establish some of the notation of our example.

Let $n$ be an even integer $\geq 8$, and let $p=p(n)$ be a prime such that $p>n$. We define groups $L_n\ge G'_n\ge G_n$ as follows. First, take
\[
L_n=Sym(n) \ltimes \F_p^n,
\]
with the semidirect product defined by the natural action of $Sym(n)$ on the basis vectors, and write $\pi:L_n\to Sym(n)$ for the natural projection. Writing $V$ for the subspace of $\F_p^n$ consisting of those vectors whose coordinates sum to zero, define the subgroup $G_n'$ of $L_n$ by
\[
G_n'=Sym(n)\ltimes V.
\]
Finally, define the subgroup $G_n$ of index $2$ in $G_n'$ by
\[
G_n=Alt(n)\ltimes V.
\]

Let $\tilde{S}$ be the generating set of $L_n$ consisting of the identity and the three elements $(c;0)$, $(\tau;0)$, $(1;1,0,\ldots,0)$ and their inverses. Here $c$ is the long cycle $(12\ldots n)$ and $\tau$ is the transposition $(12)$. It is clear that $L_n$ is the direct product of $G'_n$ with the central subgroup $Z \cong \Z/p\Z$ made of elements of the form $(1,v)$ in which all coordinates of $v$ are equal, and so we may set $S'$ to be the image of $\tilde S$ under projection to $G_n'$. Finally, let $S$ be the generating set for $G_n$ obtained from $S'$ using Lemma \ref{lem:rei-schr}, and note that $|S|\le8$.

The reason for making these definitions is that the groups they describe satisfy the following facts.

\begin{fact}\label{fact1}For every function $f:\N \to (0,+\infty)$ going to zero at infinity there exist choices of $p(n)$ such that $\diam_S(G_n)\ge |G_n|^{f(|G_n|)}$ but such that no subgroup of $G_n$ of index $<n/2$ admits a non-trivial abelian quotient.
\end{fact}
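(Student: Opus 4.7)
The two claims are largely independent. For the algebraic claim, let $H\le G_n$ have $[G_n:H]<n/2$ and let $\pi:G_n\to A_n:=Alt(n)$ be the natural projection. Since $n\ge 8$, the smallest index of a proper subgroup of $A_n$ is $n$, so $\pi(H)=A_n$. A direct semidirect-product computation gives $(a,v)(1,u)(a,v)^{-1}=(1,au)$, so the kernel $N:=V\cap H=\ker(\pi|_H)$ is stable under the $A_n$-action on $V$. Choosing $p>n$ (so $p\nmid n$) makes $V$ the irreducible deleted permutation representation of $A_n$ over $\F_p$ (standard for $n\ge 5$), hence $N\in\{0,V\}$. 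If $N=0$ then $H\cong A_n$ is simple and non-abelian, hence perfect; if $N=V$ then $V\le H$ combined with $\pi(H)=A_n$ force $H=G_n$, and perfectness of $A_n$ together with $[A_n,V]=V$ (by irreducibility) gives $[G_n,G_n]=G_n$. Either way $H$ is perfect, ruling out non-trivial abelian quotients.

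For the diameter bound I would first establish a uniform lower bound $\diam_S(G_n)\ge (p-1)/6$ by a potential function argument. Let $u_0\in V$ denote the image of $e_1$ and define the linear functional $\phi:V\to\F_p$ by $\phi(v):=v_1-v_2$. Since the coefficients $1,-1,0,\ldots,0$ sum to zero, the dependence on the $(1/n)\mathbf{1}$ correction in $u_0$ cancels and $\phi(\sigma u_0)=[\sigma(1){=}1]-[\sigma(1){=}2]\in\{-1,0,1\}$ for every $\sigma\in Sym(n)$. Each generator of $S$ is a product of at most three elements of $S'$ (Lemma \ref{lem:rei-schr} with $d=2$), so its $V$-component has the form $v^{(1)}+a_1 v^{(2)}+a_1 a_2 v^{(3)}$ with each $v^{(i)}\in\{0,\pm u_0\}$; hence the canonical $\Z$-lift of $\phi$ applied to this $V$-component lies in $\{-3,\ldots,3\}$. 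The element $(1,\lfloor (p-1)/2\rfloor u_0)\in G_n$ has $\phi$-value $\lfloor (p-1)/2\rfloor$, and after $k$ multiplications the accumulated $\phi$-value must agree modulo $p$ with some integer of absolute value at most $3k$, forcing $k\ge (p-1)/6$.

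The choice of $p(n)$ is then straightforward. Given $f\to 0$, for each sufficiently large $n$ pick a prime $p(n)>n$ such that $f(|G_n|)\le 1/(4n)$; this is possible because $|G_n|=(n!/2)p^{n-1}\to\infty$ as $p\to\infty$ (for fixed $n$) and $f(y)\to 0$. Using $\log|G_n|\le 2(n-1)\log p$ for $p\ge n$, a short estimate gives
\[
|G_n|^{f(|G_n|)}\le|G_n|^{1/(4n)}\le n^{1/4}p^{1/4}\le p/6\le\diam_S(G_n)
\]
once $n$ is large enough.

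The main obstacle is in obtaining a strong enough diameter bound: the trivial cardinality bound $\diam_S(G_n)\ge \log_{|S|}|G_n|\sim n\log p$ is too weak to dominate $|G_n|^{f(|G_n|)}$ when $f$ tends to zero arbitrarily slowly. The potential function argument, exploiting that the $V$-components of generators lie in a very restricted set closed under the $A_n$-action, is what allows us to upgrade the bound to $\Omega(p)$ and thereby handle every $f\to 0$.
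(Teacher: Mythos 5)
Your proof is correct, and it differs from the paper's in a noteworthy way on the diameter side. For the algebraic claim, the paper packages the argument as Lemma~\ref{lem:sm.index} (the only subgroups of $G'_n$ of index $<n$ are $G'_n$ and $G_n$), using precisely the ingredients you use: no proper subgroup of $Alt(n)$ has index $<n$ for $n\ge 8$, and irreducibility of the deleted permutation module $V$ over $\F_p$ when $p>n$; your direct treatment of $G_n$ is essentially the same argument. For the diameter lower bound, the paper's Lemma~\ref{lem:diam.LGG} is a cardinality estimate: the ball $\tilde S^R$ sits inside $Sym(n)\times[-R,R]^n$, so $(2\gamma'+1)^n\ge p^{n-1}$, giving $\gamma_0\gtrsim p^{1-1/n}$. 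You instead use a Lipschitz potential: the linear functional $\phi(v)=v_1-v_2$ descends to $V$, is permutation-equivariantly bounded on $\{0,\pm u_0\}$, changes by at most $3$ in lift per generator of $S$, and is $\approx p/2$ on the target element, giving $\gamma_0\ge (p-1)/6$. This is cleaner and sharper (linear in $p$ rather than $p^{1-1/n}$), though both bounds are far more than needed since the required exponent $f(|G_n|)$ tends to $0$. Two cosmetic points worth fixing: your final display should read $(p-1)/6\le\diam_S(G_n)$ rather than $p/6$ (the slack absorbs this harmlessly), and, as in the paper, the argument establishes the statement for all sufficiently large even $n$, which suffices since only the existence of the infinite sequence matters for the application.
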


\begin{fact}\label{fact2} For every function $f:\N \to \N$, there are a choice of $p(n)$, an increasing sequence of integers $K_n$, and $K_n$-approximate subgroups $A_n$ in $L_n$ such that, for large enough $n$, the approximate subgroup $A_n$ is not contained in fewer than $K_n^{\frac{1}{200}\log^{(4)} K_n}$ cosets of any subgroup $\Gamma \leq L_n$ such that $\Gamma$ has a normal subgroup $H$ contained in $A_n^{f(K_n)}$ with $\Gamma/H$  solvable.
\end{fact}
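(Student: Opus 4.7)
The plan is to take $A_n = Sym(n) \cdot B_n \subset L_n$, where $B_n = [-M, M]^n \subset \F_p^n$ is a symmetric box, with $M = M(n)$ and prime $p = p(n)$ chosen at the end. Because $Sym(n)$ permutes coordinates it preserves $B_n$, and one computes $A_n^k \subset Sym(n) \cdot [-kM, kM]^n$ for every $k$; in particular $A_n$ is a $K_n$-approximate subgroup with $K_n \le 2^n$ (and $K_n \asymp 2^n$ for large $M$). I will then take $p > 2f(K_n) M$, so that the full power $A_n^{f(K_n)}$ has $\F_p^n$-component an honest, un-wrapped integer box.

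Now suppose $A_n \subset \bigcup_{i=1}^N \Gamma g_i$, with $\Gamma, H$ as in the statement, and write $\pi : L_n \to Sym(n)$ for the canonical projection. Since $\pi(A_n) = Sym(n)$, we immediately have $N \ge [Sym(n) : \pi(\Gamma)]$, and a more refined fibre-by-fibre count using the semidirect structure upgrades this to $N \ge [Sym(n) : \pi(\Gamma)] \cdot (2M+1)^n / |\Gamma \cap \F_p^n|$. The whole task is to bound $|\Gamma \cap \F_p^n|$ from above. Solvability of $\Gamma/H$ forces $\pi(\Gamma)/\pi(H)$ to be solvable, so $\pi(H)$ must contain every non-abelian simple composition factor of $\pi(\Gamma)$; in the regime of small index in $Sym(n)$ this yields $\pi(H) \supseteq Alt(n)$ (or a product of large alternating groups in the intransitive case).

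Since $H \subset A_n^{f(K_n)}$ and $p > 2f(K_n) M$, the subgroup $H \cap \F_p^n$ is contained in a genuine integer box and hence must be $\{0\}$ (any nonzero element would generate a cyclic subgroup of order $p$ which cannot fit in the box). Thus $H \to \pi(H)$ is injective, and the lift of a non-abelian simple $T \subseteq \pi(H)$ in $H$ takes the form $\sigma \mapsto (\sigma, v_\sigma)$ with each $v_\sigma$ lying in the integer box $[-f(K_n) M, f(K_n) M]^n$. This assignment is a $1$-cocycle $T \to \F_p^n$, and provided $p > n$ we have $p \nmid |T|$, so $H^1(T, \F_p^n) = 0$ and the cocycle is a coboundary $v_\sigma = w - \sigma w$. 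The smallness of $v_\sigma$ uniformly in $\sigma \in T$ forces the coordinate differences of $w$ to lie in the same box, so after subtracting a constant vector $w$ itself has controlled coordinates, and the lift of $T$ in $H$ is the $L_n$-conjugate $(1,w) T (1,-w)$ of the standard embedding $T \hookrightarrow Sym(n) \subset L_n$. Since $\Gamma$ normalises $H$, and $T$ is characteristic in $\pi(H)$ (for instance as the socle of its non-solvable radical), $\Gamma$ normalises the conjugate lift of $T$ as well; hence $\Gamma \subset (1,w)\bigl(N_{Sym(n)}(T) \ltimes (\F_p^n)^T\bigr)(1,-w)$, and in particular $|\Gamma \cap \F_p^n| \le p^{\dim (\F_p^n)^T}$.

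Putting the bounds together, $N \ge (2M+1)^n / p^{\dim (\F_p^n)^T}$ in the main (transitive) case where $\dim (\F_p^n)^T = 1$. Choosing $M \asymp 2^{\lambda \log^{(3)} n}$ and $p$ a prime just above $2f(K_n) M$ (available by Bertrand's postulate), this becomes $N \ge 2^{cn \log^{(3)} n}$, which translates via $K_n \asymp 2^n$ into $N \ge K_n^{(1/200) \log^{(4)} K_n}$ after optimising the constants. The main obstacle will be the intermediate regime where $[Sym(n) : \pi(\Gamma)]$ has moderate size and $\pi(\Gamma)$ may be intransitive with many orbits; then $T$ is a product of alternating groups acting with the same orbits, $\dim (\F_p^n)^T$ equals the number of orbits, and the bound $p^{\dim (\F_p^n)^T}$ degrades. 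The delicate point is to show that the enlarged factor $[Sym(n) : \pi(\Gamma)] \ge \binom{n}{m_1, \ldots, m_k}$ gained from the orbit structure $(m_1, \ldots, m_k)$ more than compensates, uniformly over all orbit partitions, and to optimise the choice of $M(n), p(n)$ so that the claimed $\log^{(4)}$ exponent emerges in every case.
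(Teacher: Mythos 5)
Your construction is genuinely different from the paper's, and in an attractive way: you take the ``box'' approximate group $A_n = Sym(n)\cdot [-M,M]^n$ directly (with $K_n\asymp 2^n$), whereas the paper takes $A_n=\tilde S^{2k}$ for a tiny generating set $\tilde S$ and pulls the approximate-group property out of the almost-flatness machinery (Lemmas \ref{lem:5adic.ph} and \ref{lem:sm.doub.app.gp}, with $K_n=5^{16n}$). Your structural input is also different: you use vanishing of $H^1(T,\F_p^n)$ for $p\nmid |T|$ to rigidify the lift of a simple $T\le\pi(H)$ and bound $\Gamma\cap\F_p^n$, whereas the paper restricts to a large invariant subset $\Omega_0$ (via the Bochert-type Lemma \ref{bochert} together with Lemmas \ref{largeorb} and \ref{largeblock}) and derives a contradiction from solvability of $\pi_{\Omega_0}(\overline{\Gamma_0})/\pi_{\Omega_0}(\overline{H_0})$. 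In the regime $\pi(\Gamma)\supseteq Alt(n)$ your argument is essentially sound modulo tightening (see below), and the cohomological point of view is a nice alternative to the paper's.

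However there are genuine gaps. First, and most seriously, you only carry out the transitive, small-index case; you explicitly flag the intransitive/intermediate-index regime as ``the delicate point'' and sketch a hoped-for compensation between the multinomial index lower bound and $p^{\#\text{orbits}}$. This is exactly the content the paper does not waive: its whole Lemma \ref{bochert} (and the supporting Lemmas \ref{largeorb} and \ref{largeblock}) exists precisely to control the orbit and block structure of $\pi(\Gamma)$ when the index is only moderately small, and without some version of that analysis your proof does not close. Second, your fibre bound $N\ge (2M+1)^n/p^{\dim(\F_p^n)^T}$ is too weak as written: you must take $p>2f(K_n)M$ to prevent wrap-around, and since $f$ is arbitrary this $p$ can dwarf $(2M+1)^n$, making the quotient less than~$1$. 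The fix is available from your own setup: a $\sigma$-fibre of $\Gamma g_i\cap A_n$ sits inside a coset of $\Gamma\cap\F_p^n\subset\Delta$ (the diagonal line), and a coset of $\Delta$ meets the box $[-M,M]^n$ in at most $2M+1$ points, so the correct bound $N\ge [Sym(n):\pi(\Gamma)]\,(2M+1)^{n-1}$ holds independently of $p$ — but you need to say this, since otherwise your parameter choice does not deliver the claimed estimate for fast-growing $f$. Third, the reduction ``$\pi(H)$ must contain every non-abelian simple composition factor of $\pi(\Gamma)$'' is loose (composition factors are subquotients, not subgroups), and ``$T$ is characteristic in $\pi(H)$ as the socle of its non-solvable radical'' needs a clean statement and proof before you can assert that $\Gamma$ normalises the lift of $T$; in the $Alt(n)\le\pi(H)$ case this can be repaired, but it is not automatic in the intransitive case.
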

Here we have denoted by $\log^{(4)}$ the $4$th iteration of $\log$ in base $e$.

Fact \ref{fact1} establishes statement $(1)$ above, while Fact \ref{fact2} corresponds to statement $(2)$. Fact \ref{fact2} should be compared with the classical case of approximate subgroups of abelian groups, where it is expected that only $O(K^{O(1)})$ translates of some coset-progression are needed to cover the approximate subgroup (this is the so-called \emph{polynomial Freiman-Ruzsa type conjecture}, see \cite{green-open-pb} and references therein). In the abelian case bounds of the form $e^{(\log K)^{O(1)}}$ have been established by Sanders \cite{sand.survey}.
\begin{remark*}
After a first draft of this paper circulated, Eberhard \cite{eberhard} adapted the construction of Fact \ref{fact2} to show that for arbitrarily large $K$ there exists a group $G$ with a finite $K$-approximate subgroup that is not covered by $K^{\log\log K}$ cosets of any subgroup $\Gamma$ having a finite normal subgroup $H$ with $\Gamma/H$ solvable. This nicely improves the bound in our Fact \ref{fact2}. Note that the example in \cite{eberhard} generates an infinite group, while we work here in the setting of finite groups.
\end{remark*}

\bigskip

Before we prove Facts \ref{fact1} and \ref{fact2}, let us isolate a few helpful statements about the groups $L_n$, $G_n'$ and $G_n$. We start by bounding their diameters, which we denote from now on by
\[
\gamma=\diam_{\tilde S}(L_n),\qquad\qquad\gamma'=\diam_{S'}(G_n'),\qquad\qquad\gamma_0=\diam_S(G_n).
\]
\begin{lemma}\label{lem:diam.LGG}
We have
\begin{enumerate}
\item $\frac{1}{2}(p-1)\le\gamma\le np+O(n^2)$;
\item $\frac{1}{2}(p^{1-1/n}-1)\le\gamma'\le np+O(n^2)$;
\item $\frac{1}{10}p^{1-1/n}\le\gamma_0\le np+O(n^2)$.
\end{enumerate}
\end{lemma}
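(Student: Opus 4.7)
The proof naturally splits into three upper bounds of the same form $np + O(n^2)$ and three lower bounds of distinct structure. For the upper bounds, the plan is to exhibit an efficient word in $\tilde S$ for any element of $L_n$, and then transfer to $G_n'$ via the direct-product decomposition $L_n = G_n' \times Z$ and to $G_n$ via Reidemeister--Schreier (Lemma \ref{lem:rei-schr}). For the lower bounds, a coordinate-sum homomorphism handles $\gamma$, while a ball-counting argument handles $\gamma'$ and $\gamma_0$.

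For the upper bound on $\gamma$, given $(\sigma, v) \in L_n$ with $v = (v_1, \ldots, v_n)$, I would factor the element as $(1, v) \cdot (\sigma, 0)$ and represent $(1, v)$ by the explicit word
\[
g^{v_1} \, c \, g^{v_2} \, c \cdots g^{v_n} \, c, \qquad g := (1;\, 1, 0, \ldots, 0).
\]
A direct inductive computation, using the fact that $c$ cyclically shifts coordinates (so $c^i(e_1) = e_{i+1}$) and that $c^n = 1$, shows this word evaluates to $(c^n, v) = (1, v)$; its length is $n + \sum_i |v_i| \le n(p+1)/2 \le np$ after reducing each $v_i$ to $|v_i| \le (p-1)/2$ via $g^{-1}$. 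The remaining factor $(\sigma, 0)$ is a pure permutation, and $Sym(n)$ has diameter $O(n^2)$ with respect to $\{c, \tau\}$ (by bubble sort, since each adjacent transposition is $c^{i-1}\tau c^{-(i-1)}$). This gives $\gamma \le np + O(n^2)$. Then $\gamma' \le \gamma$ follows from the direct-product decomposition $L_n = G_n' \times Z$ (valid because $p > n$ ensures $n$ is invertible mod $p$), which provides a surjective homomorphism $L_n \to G_n'$ carrying $\tilde S$ onto $S'$; diameters cannot grow under surjections. Finally, $\gamma_0 \le \gamma'$ by Lemma \ref{lem:rei-schr}.

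For the lower bound on $\gamma$, I would use the coordinate-sum homomorphism $\Sigma : L_n \to \F_p$, $(\sigma, v) \mapsto \sum_i v_i$. Since $\Sigma(\tilde S) \subseteq \{-1, 0, 1\}$, the image of $\tilde S^r$ lies in $\{-r, \ldots, r\} \subset \F_p$. The element $(1;\, (p-1)/2, 0, \ldots, 0)$ has $\Sigma$-image $(p-1)/2 \ne 0$ (as $p$ is odd), forcing $\gamma \ge (p-1)/2$.

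For the lower bounds on $\gamma'$ and $\gamma_0$, I would apply a volume-counting argument. The key point is that the shift generator in $S'$ is $\rho := (1;\, 1 - 1/n, -1/n, \ldots, -1/n)$ (the projection of $g$ to $G_n'$ via $L_n = G_n' \times Z$), and its $Sym(n)$-conjugates are exactly the $n$ vectors $e_j - (1/n)(1, \ldots, 1)$, which span $V$. A word of length $r$ in $S'$ therefore produces an element $(\sigma, v)$ with $\sigma \in Sym(n)$ and $v = \sum_{j=1}^n c_j\bigl(e_j - (1/n)(1,\ldots,1)\bigr)$ for integers $c_j$ satisfying $\sum_j |c_j| \le r$; in particular each $|c_j| \le r$, giving at most $(2r+1)^n$ possibilities for $v$. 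Hence $|S'^r| \le n!\,(2r+1)^n$, and comparing with $|G_n'| = n!\,p^{n-1}$ yields $\gamma' \ge \tfrac{1}{2}(p^{1-1/n} - 1)$. For $\gamma_0$, Lemma \ref{lem:rei-schr} gives $S \subseteq G_n \cap S'^3$, so $S^r \subseteq S'^{3r}$ and $|S^r| \le (n!/2)(6r+1)^n$ (the factor $n!/2$ coming from $S^r \subseteq G_n$ with $Alt(n)$ permutation part); comparing with $|G_n| = (n!/2) p^{n-1}$ gives $\gamma_0 \ge (p^{1-1/n} - 1)/6 \ge p^{1-1/n}/10$, the final inequality being satisfied once $p^{1-1/n} \ge 5/2$, which holds since $n \ge 8$ and $p > n$. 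The most delicate point is identifying the $Sym(n)$-orbit of $\rho$'s vector part with exactly these $n$ basic vectors, which relies on $p > n$ so that $V$ is the standard $(n-1)$-dimensional representation.
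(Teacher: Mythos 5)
Your proof is correct, and it deviates from the paper's argument in a few places that are worth noting.

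For the upper bound on $\gamma$, you write an explicit word $g^{v_1}c\,g^{v_2}c\cdots g^{v_n}c$, whereas the paper first builds one coordinate line $(1;\F_p,0,\ldots,0)$ in about $p$ steps and then conjugates by powers of $c$ to fill all coordinates; both yield $np+O(n^2)$. For the lower bound on $\gamma$, the paper observes that $\tilde S^{R}\subset Sym(n)\times[-R,R]^n$ and compares cardinalities directly, while you use the coordinate-sum homomorphism $\Sigma:L_n\to\F_p$; these are equivalent (your homomorphism argument is essentially a conceptual repackaging of the one-dimensional slice of the paper's count), and both give exactly $\gamma\ge(p-1)/2$. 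For $\gamma'$, both proofs count $V$-components of words of length $r$, though you give a more explicit description of the reachable vectors as integer combinations $\sum c_j(e_j-\tfrac1n\mathbf{1})$ with $\sum|c_j|\le r$, where the paper just says the projections land in a set of size at most $(2R+1)^n$.

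The genuine divergence is in the lower bound on $\gamma_0$. The paper simply says item (3) follows from item (2) via Lemma \ref{lem:rei-schr}, i.e.\ $\gamma_0\ge\tfrac14(\gamma'-2)\ge\tfrac18 p^{1-1/n}-\tfrac58$, and then presumably absorbs the additive constant into the $\tfrac1{10}$; but that absorption only works once $p^{1-1/n}\ge 25$, which fails for the smallest admissible pairs (e.g.\ $n=8$, $p=11$ gives $p^{7/8}\approx8.1$). This is harmless in the paper's applications, where $p=p(n)$ is taken arbitrarily large, but it means the lemma as stated is slightly loose for small $p$. Your approach repeats the volume count directly for $G_n$, using $S\subset G_n\cap S'^3$ to get $|S^r|\le\tfrac{n!}{2}(6r+1)^n$, which yields $\gamma_0\ge(p^{1-1/n}-1)/6\ge p^{1-1/n}/10$ once $p^{1-1/n}\ge 5/2$ — a condition that actually holds for all $n\ge8$ and $p>n$. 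So your derivation of (3) is cleaner and genuinely closes a small gap in the paper's argument.
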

\begin{proof}
Item $(3)$ follows from item $(2)$ by Lemma \ref{lem:rei-schr}. The upper bound of $(2)$ trivially follows from the upper bound of $(1)$. To prove the lower bound of $(1)$ note that the elements of $\tilde{S}^R$ are all in $Sym(n) \times [-R,R]^n$, and hence that $(2\gamma+1)^n\ge p^n$. To prove the lower bound of $(2)$, note that the projections of these elements to $G_n'$ lie in $Sym(n) \times B_n$ for some subset $B_n$ of $V$ with size at most $(2R+1)^n$, and hence that $(2\gamma+1)^n \geq p^{n-1}$. For the upper bound of $(1)$, note that $O(n^2)$ steps are enough to cover $Sym(n)$ (see \cite{lubotzky-book}, for example), while $p$ steps are enough to obtain the line $(1;\F_p,0,\ldots,0)$ and thus (conjugating by powers of $c$) we obtain each coordinate line in $p+n$ steps and thus all of $\F_p^n$ in $n(p+n)$ steps. Hence the diameter of $L_n$ with respect to $\tilde{S}$ is at most $np+O(n^2)$.
\end{proof}
\begin{lemma}\label{lem:sm.index}
The only subgroups of $G_n'$ of index $<n$ are $G_n'$ itself and $G_n$.
\end{lemma}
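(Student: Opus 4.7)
The plan is to analyse $H$ through its projection $K := \pi(H) \subseteq Sym(n)$ and its intersection $W := H \cap V$ with the normal abelian subgroup $V$. The first step is to observe that $\pi$ sends cosets of $H$ surjectively onto cosets of $K$, so $[Sym(n):K] \le [G_n':H] < n$. I would then invoke the classical low-index fact that for $n \ge 5$ with $n \ne 6$ the only subgroups of $Sym(n)$ of index strictly less than $n$ are $Sym(n)$ itself and $Alt(n)$ (the point stabilisers $Sym(n-1)$ have index exactly $n$, every intransitive proper subgroup lies in some $Sym(k) \times Sym(n-k)$ of index $\binom{n}{k} \ge n$, and transitive proper subgroups of index $<n$ must coincide with $Alt(n)$). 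Since $n \ge 8$ this gives $K \in \{Sym(n), Alt(n)\}$.

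The second step is to analyse $W$. It is normal in $H$, and since $V$ is abelian the conjugation action of $H$ on $V$ factors through $H/W \cong K$; thus $W$ is a $K$-invariant $\F_p$-subspace of $V$. Because $K \supseteq Alt(n)$ and $p > n \ge 5$, the deleted permutation representation $V$ is irreducible as an $\F_p[Alt(n)]$-module, which forces $W \in \{0,V\}$.

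To finish I would split into two cases. If $W = V$, then $V \subseteq H$ and $H = K \ltimes V$ equals $G_n'$ (when $K = Sym(n)$) or $G_n$ (when $K = Alt(n)$), giving the desired conclusion. If instead $W = \{0\}$, then $|H| = |K| \le n!$, and so $[G_n':H] \ge p^{n-1}$, which far exceeds $n$ since $p > n \ge 8$, contradicting the hypothesis that $[G_n':H] < n$.

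The only non-routine ingredient is the irreducibility of the deleted permutation module $V$ as an $\F_p[Alt(n)]$-module, which holds whenever $p \nmid n$ and $n \ge 5$; this can either be cited from modular representation theory or derived directly from the $2$-transitivity of $Alt(n)$ on $\{1,\ldots,n\}$ combined with the hypothesis $p \nmid n$. Everything else is a matter of index bookkeeping and the classical classification of low-index subgroups of $Sym(n)$.
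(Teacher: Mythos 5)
Your proof is correct and follows essentially the same path as the paper's: reduce to the low-index classification of subgroups of $Sym(n)$ to show $\pi(H) \supseteq Alt(n)$, note that $H \cap V$ is an $Alt(n)$-invariant subspace (because $V$ is abelian, so conjugation factors through $\pi$), invoke the irreducibility of the deleted permutation module over $\F_p$ when $p \nmid n$, and rule out $H \cap V = 0$ by an index count. The only cosmetic difference is that you phrase the last step as an explicit dichotomy $W \in \{0, V\}$ followed by a cardinality contradiction, while the paper first shows $\pi|_\Gamma$ is non-injective and then concludes $\ker\pi \cap \Gamma = V$ by irreducibility; both amount to the same computation.
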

\begin{proof}
It is well known that the only proper subgroup of $Sym(n)$ with index $<n$ is $Alt(n)$, so if $\Gamma$ is a subgroup of $G'_n$ with index $<n$ then $\pi(\Gamma)$ contains $Alt(n)$. Since $p>n$, the projection $\pi$ is not injective in restriction to $\Gamma$. However, $\ker \pi \cap \Gamma$ is invariant under the action of $Alt(n)$. It is well known that $Sym(n)$ and $Alt(n)$ act irreducibly on $V$, and so this implies that $\ker \pi \cap \Gamma=V$, and hence that $Alt(n)\ltimes V\subset\Gamma$.
\end{proof}

\begin{proof}[Proof of Fact \ref{fact1}]
Since $|G_n| \leq  n! p^{n-1} \leq (np)^{n-1} \leq p^{2(n-1)}$, Lemma \ref{lem:diam.LGG} implies that $\gamma_0 \gg |G_n|^{\frac{1}{2n}}$. In light of Lemma \ref{lem:sm.index}, therefore, Fact \ref{fact1} is satisfied upon setting $p=p(n)$ sufficiently large.
\end{proof}

\bigskip
%
%It follows from Bochert's theorem \cite[Theorem 3.3B \& Lemma 5.2A]{dixon-mortimer} that when $n$ is large enough, every transitive subgroup of $Sym(n)$ different from $Alt(n)$ has order at most $[(\frac{n}{2})!]^2$.
%
%As is well known, $Sym(n)$ and $Alt(n)$, the alternating group, act irreducibly on $V$ and every vector in $\F_p^n$ not in $Z$ nor $V$ is cyclic.
%
%

We now pass to the proof of Fact \ref{fact2}. We use the following lemma.

\begin{lemma}\label{bochert} Let $\beta\in(0,1)$ and suppose that $(\log \log n)^{-\beta}\le d<1$. Write $t=d^{3/d}$ and $b=9/t^2$. Then, provided $n \geq n(\beta)$, every subgroup $H \leq Sym(n)$ of order at least $d^n n!$ has a subgroup $H_0$ with $[H:H_0] \leq b$ that stabilises a subset $\Omega_0 \subset \{1,\ldots,n\}$ of size at least $\frac{d}{3b}n$ on which it acts as $Sym(\Omega_0)$ or $Alt(\Omega_0)$.
\end{lemma}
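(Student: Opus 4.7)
The plan is to combine an orbit analysis for $H$ with a block-system decomposition, applying Bochert's classical bound on primitive permutation groups at each stage. First, I would list the orbits $\Omega_1,\ldots,\Omega_r$ of $H$ with sizes $n_1\ge\cdots\ge n_r$. The elementary inequality $|H|\le\prod_i n_i!$, combined with $\sum_i n_i\log n_i\le n\log n_1$ and Stirling's formula, converts the hypothesis $|H|\ge d^n n!$ into the lower bound $m:=n_1\ge dn/e$ on the size of the largest orbit. Writing $\Omega:=\Omega_1$ and $H^*\le\mathrm{Sym}(\Omega)$ for the transitive induced action, we have $|H^*|\ge d^n n!/(n-m)!$.

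Next I would analyze the block structure of $H^*$. If $H^*$ is primitive on $\Omega$, then Bochert's theorem --- proper primitive subgroups of $\mathrm{Sym}(m)$ have index at least $\lfloor(m+1)/2\rfloor!$ --- contradicts the above lower bound once $m$ exceeds a threshold depending on $d$; the conditions $d\ge(\log\log n)^{-\beta}$ and $n\ge n(\beta)$ force this threshold to be crossed, so $H^*\supseteq\mathrm{Alt}(\Omega)$, and we may take $\Omega_0:=\Omega$ and $H_0$ to be the preimage in $H$ of $H^*\cap\mathrm{Alt}(\Omega)$, of index at most $2$. Otherwise, fix a block system $\mathcal B$ of $H^*$ with block size $k$, chosen (by refining to a minimal chain if necessary) so that the block-stabilizer's action on any block $B\in\mathcal B$ is primitive. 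The wreath-product embedding $H^*\hookrightarrow\mathrm{Sym}(k)\wr\mathrm{Sym}(m/k)$ yields $|H^*|\le(k!)^{m/k}(m/k)!$; comparing with the lower bound for $|H^*|$, taking logarithms, and using $m\ge dn/e$ gives $\log(m/k)\le(n/m)\log(1/d)+O(1)\le(e/d)\log(1/d)+O(1)$, and hence $k\ge tm/C$ with $t=d^{3/d}$ (using $d^{e/d}\ge d^{3/d}$ since $e<3$ and $d<1$) and absolute $C$.

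A second counting argument, applied this time to the restriction of $H^*_B$ to $B$, yields $|(H^*_B)|_B|\ge (k!)^{1-o(1)}$, and Bochert applied again to this primitive action forces $(H^*_B)|_B\supseteq\mathrm{Alt}(B)$. Setting $\Omega_0:=B$ and $H_0\le H$ the preimage of $H^*_B\cap\pi_B^{-1}(\mathrm{Alt}(B))$, where $\pi_B$ is the restriction map, gives $[H:H_0]\le 2m/k$ with $H_0$ acting on $\Omega_0$ as $\mathrm{Alt}(B)$ or $\mathrm{Sym}(B)$. Plugging $k\ge tm/C$ into the definitions $b=9/t^2$ and $dn/(3b)$ and using $m\ge dn/e$, one verifies directly the desired inequalities $[H:H_0]\le b$ and $|\Omega_0|\ge dn/(3b)$ after absorbing constants into $C$.

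The main obstacle is the quantitative book-keeping: matching the exponent $3/d$ in $t$ rather than the natural $e/d$ that falls out of the counting, and the factor $9$ in $b$, is slack absorbing the various $O(1)$ Stirling-type errors; this slack is comfortable for $d<1$ but shrinks as $d\to 0$, which is why the lower bound $d\ge(\log\log n)^{-\beta}$ is imposed. A subtler issue is arranging that the block-stabilizer acts primitively on a block with $k$ still of order $tm$: if the first (maximal) block system has block-stabilizer acting imprimitively on its blocks, one refines once more, and the loss from this second refinement is absorbed in the extra power of $t$ appearing in $b=9/t^2$ versus the primary gain of size $k\ge tm/C$.
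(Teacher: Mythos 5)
Your proposal follows essentially the same route as the paper: derive a large $H$-orbit from the order hypothesis by a factorial-counting argument, pass to a minimal block system for the transitive action on that orbit (so that the block stabiliser acts primitively on a block), bound the number of blocks by a wreath-product counting estimate, and finally invoke Bochert's theorem to force the primitive restriction to contain the alternating group. The only real difference is presentational: the paper isolates the two counting steps as separate lemmas (one producing an orbit of size at least $\frac{d}{3}n$, one bounding the maximal number of imprimitivity blocks by $9/t^2$), and treats the primitive case implicitly as the degenerate case where the minimal block is all of $\Omega$, whereas you perform the Stirling estimates inline and split off the primitive case explicitly.

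One small corrective remark: the ``subtler issue'' you raise about refining the block system ``once more'' and absorbing a second loss is not actually an issue. The wreath-product inequality $|H^*|\le(k!)^{m/k}(m/k)!$ holds for \emph{every} block system, hence in particular for the minimal one (where the block stabiliser already acts primitively). Applying the counting to that system directly bounds the number of blocks once and for all; there is no iterated refinement and no compounding loss. This is exactly how the paper proceeds, bounding the maximal number of blocks across all block systems in a single step. In fact, your $m/k=O(1/d)$ bound, if one tracks constants carefully, is somewhat stronger than the paper's $m/k\le 9/t^2$ (since $t=d^{3/d}<d$), so the factor $b=9/t^2$ in the statement leaves comfortable slack for the $O(1)$ errors your Stirling-based derivation incurs.
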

We prove Lemma \ref{bochert} shortly. First, let us use it to prove Fact \ref{fact2}.

\begin{proof}[Proof of  Fact \ref{fact2}]
%To fix ideas we also choose $p$ to be very large in terms of $n$, say $p \geq \max\{e^{n^2},n^2f(K_n)^2\}$.
%
By Lemma \ref{lem:diam.LGG} we have $p^{1-\frac{1}{n}} \leq 2\gamma+1 \leq O(np)$. Setting $\eps=\frac{1}{4n}$ and $\delta = \frac{1}{2}$, and taking $K_n=5^{\frac{2}{\eps \delta}} = 5^{16n}$, Lemmas \ref{lem:5adic.ph} and \ref{lem:sm.doub.app.gp} therefore give a scale $k \in [\gamma^{1/4}, \gamma^{1/2}]$ such that $A_n:=\tilde{S}^{2k}$ is a $K_n$-approximate subgroup.

Assume that there are subgroups $H \leq \Gamma$ in $L_n$, such that $H$ is normal in $\Gamma$, contained in $\tilde{S}^{2kf(K_n)}$, $\Gamma/H$ is solvable, and $\tilde{S}^{2k}$ is contained in fewer than $K_n^{ \alpha \log \log \log \log K_n}$ cosets of $\Gamma$ for some fixed $\alpha>0$ to be determined shortly. We may choose $p$ large enough that  $K_n^{ \alpha \log \log \log \log K_n}<\gamma^{1/4} < 2k$, which by Lemma \ref{lem:exhausts.cosets} implies that $\Gamma$ has index at most $K_n^{ \alpha \log \log \log \log K_n}$ in $L_n$. Now set $B=5^{16}$, so that $K_n=B^n$, and set $\alpha:=\frac{1}{3\log B}$.  Note that $\alpha \geq \frac{1}{200}$.

If $n$ is large enough, we then see that the index $K_n^{ \alpha \log \log \log \log K_n}$ is bounded above by $d^{-n}$, where $d=(\log \log n)^{-\frac{1}{2}}$. We may thus apply Lemma \ref{bochert} to the subgroup $\pi(\Gamma) \leq Sym(n)$, because its index is at most $d^{-n}$ with $d=(\log \log n)^{-\frac{1}{2}}$. We get a subgroup $\Gamma_0 \leq \Gamma$ of index at most $b$ such that $\Gamma_0$ fixes a subset $\Omega_0$ of $\{1,\ldots,n\}$ of size at least $\frac{d}{3b}n$, where it acts as $Sym(\Omega_0)$ or $Alt(\Omega_0)$. Note that $b\leq \log n$, and that $\frac{d}{3b}n$ is at least $\frac{n}{(\log n)^2}$, when $n$ is large. Let $H_0=H \cap \Gamma_0$, and $\pi_{\Omega_0}$ the natural projection from $Sym(\Omega_0) \ltimes \F_p^{\Omega_0}$ to $Sym(\Omega_0)$. There is a natural restriction homomorphism from $\Gamma_0$ to $Sym(\Omega_0) \ltimes \F_p^{\Omega_0}$, which consists in forgetting the entries outside of $\Omega_0$. Let $\overline{\Gamma_0}, \overline{H_0}$ be the images of $\Gamma_0$, $H_0$ under this map.

Now observe that if $n$ is large enough $\overline{\Gamma_0}$ must contain $Alt(\Omega_0) \ltimes V_{\Omega_0}$, where  $V_{\Omega_0}$ is the subspace of $\F_p^{\Omega_0}$ of vectors whose coordinates sum to $0$. Indeed, to see this it is enough to show that $|\overline{\Gamma_0}|>p|{\Omega_0}|!$, because then $\pi_{\Omega_0}$ will not be injective on $\Gamma_0$ and there will be an element in the kernel whose coordinates are not all equal, so that the span of its orbit under $Alt({\Omega_0})$ will contain  $V_{\Omega_0}$. On the other hand if $|\overline{\Gamma_0}|\leq p|{\Omega_0}|!$, then $|\Gamma_0| \leq p|{\Omega_0}|! p^{n-|{\Omega_0}|} (n-|{\Omega_0}|)! \leq n! p^n p^{1-|\Omega_0|}$, so that  $p^n n!=|L_n| \leq b |\Gamma_0|[L_n:\Gamma]\leq b n! p^n p^{1-|\Omega_0|} K_n^{\alpha \log^{(4)} n}$, which implies that $p^{|\Omega_0| -1} \leq e^{n \log n}$ if $n$ is large enough. But $|\Omega_0|\geq n/(\log n)^2$ and $p\geq e^{n^2}$, so this is impossible unless $n$ is bounded.

The group $\pi_{\Omega_0}(\overline{H_0})$ is a normal subgroup of $Sym({\Omega_0})$. Hence it is either trivial, or must contain $Alt({\Omega_0})$. If it contains $Alt({\Omega_0})$, then, since $\overline{H_0}$ is normal in $\overline{\Gamma_0}$, and $\overline{\Gamma_0}$ contains $V_{\Omega_0}$, $\overline{H_0}$ must contain $[V_{\Omega_0},Alt({\Omega_0})]=V_{\Omega_0}$, hence $\overline{H_0}$ contains $Alt({\Omega_0}) \ltimes V_{\Omega_0}$. However we observed in the proof of Lemma \ref{lem:diam.LGG}, the vector coordinates in $\tilde{S}^R$ are contained in the interval $[-R,R]$. Since $H_0$ lies in $\tilde{S}^{2kf(K_n)}$ we must have $p \leq 2kf(K_n) = O(f(K_n)\sqrt{np})$, which we may rule out by choosing $p$ large enough in terms of $n$. It follows that $\pi_{\Omega_0}(\overline{H_0})$ is trivial. But this contradicts the fact $\pi_{{\Omega_0}}(\overline{\Gamma_0})/\pi_{\Omega_0}(\overline{H_0})$ is solvable (being a homomorphic image of $\Gamma_0/H_0$), because $Alt({\Omega_0})$ is not solvable if $n$ is large enough. This contradiction ends the proof of Fact \ref{fact2}.
\end{proof}

The proof of Lemma \ref{bochert} requires the following two preliminary steps.

\begin{lemma}\label{largeorb} There is $n_0 \in \N$ such that if $d \in [0,1]$, $n\geq n_0$, and $H \leq Sym(n)$ is a subgroup of order at least $d^n n!$, then $H$ has an orbit of size at least $\frac{d}{3}n$.
\end{lemma}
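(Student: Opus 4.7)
The plan is to argue by contrapositive: I would assume that every orbit of $H$ on $\{1,\ldots,n\}$ has size strictly less than $\frac{d}{3}n$ and show that this forces $|H| < d^n n!$, contradicting the hypothesis.

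The first ingredient is the elementary observation that if $H \leq \mathrm{Sym}(n)$ acts with orbits of sizes $n_1,\ldots,n_k$ (where $\sum_i n_i = n$), then $H$ embeds into the direct product $\prod_{i=1}^k \mathrm{Sym}(n_i)$ via its restriction to each orbit. Consequently $|H| \leq \prod_{i=1}^k n_i!$. This is the only way the permutation-group structure enters; the rest is purely numerical.

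The second ingredient bounds this product under the orbit-size assumption. Using the trivial inequality $m! \leq m^m$ and $n_i < \frac{d}{3}n$, I obtain
\[
\prod_{i=1}^k n_i! \;\leq\; \prod_{i=1}^k n_i^{n_i} \;\leq\; \Bigl(\tfrac{d}{3}n\Bigr)^{\sum_i n_i} \;=\; \Bigl(\tfrac{d}{3}n\Bigr)^n.
\]
On the other hand, the weak form of Stirling's formula $n! > (n/e)^n$ (which follows from $e^n \geq n^n/n!$) gives $d^n n! > (dn/e)^n$. Combining these two estimates, the hypothesis $|H| \geq d^n n!$ would force $(dn/3)^n \geq (dn/e)^n$, i.e.~$(e/3)^n \geq 1$. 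Since $e < 3$, this fails for every $n \geq 1$, yielding the desired contradiction.

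There is no real obstacle here: the argument is essentially one line of Stirling. The only conceptual point worth flagging is that the constant $\tfrac{1}{3}$ in the statement is chosen precisely so that it beats the $\tfrac{1}{e}$ coming from Stirling, with the slack $\log(3/e) > 0$ providing the contradiction. Any constant strictly smaller than $\tfrac{1}{e}$ would work in place of $\tfrac{1}{3}$, and conversely no smaller threshold than $\sim \tfrac{d}{e}n$ can be obtained by this type of counting argument. Taking $n_0 = 1$ is sufficient.
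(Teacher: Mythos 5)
Your proof is correct and follows exactly the same route as the paper: bound $|H|$ by $\prod n_i! \leq \prod n_i^{n_i} \leq (dn/3)^n$ when all orbits are small, and compare against $d^n n! > (dn/e)^n$ via Stirling. The only difference is cosmetic: you make the Stirling step explicit and thereby observe that $n_0 = 1$ suffices (since $e < 3$ makes $(e/3)^n < 1$ for every $n \geq 1$), whereas the paper stops at the weaker statement that $n$ must be bounded.
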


\begin{proof} Let $n_1,\ldots,n_s$ be the size of the orbits of $H$. We have $H \leq Sym(n_1) \times \ldots \times Sym(n_s)$, and hence $|H| \leq n_1! \ldots n_s! \leq n_1^{n_1} \ldots n_s^{n_s}$. If all the $n_i$ are at most $\frac{d}{3} n$, then we get $|H| \leq d^n (\frac{n}{3})^n$, and hence $n! \leq (\frac{n}{3})^n$, which implies that $n$ is bounded.
\end{proof}

\begin{lemma}\label{largeblock} There is $m_0 \in \N$ such that if $t \in [0,1]$, $m\geq m_0$, and $H \leq Sym(m)$ is a transitive subgroup of order at least $t^m m!$, then the maximal number of imprimitivity blocks  of $H$ is at most $(3/t)^2$.
\end{lemma}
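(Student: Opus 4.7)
The plan is to exploit the standard embedding of an imprimitive permutation group into a wreath product, together with elementary Stirling-type bounds. Suppose $H$ preserves a non-trivial system of imprimitivity with $k$ blocks of common size $b=m/k\geq 2$. The stabilizer of this block system inside $Sym(m)$ is the wreath product $Sym(b)\wr Sym(k)$, which has order $(b!)^k\cdot k!$, and so
\[
|H|\leq (b!)^k\cdot k!.
\]

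I would then use the crude bound $n!\leq n^n$ on both factors to get $|H|\leq b^{bk}k^k=(m/k)^m k^k=m^m/k^{m-k}$. On the lower side, $m!\geq (m/e)^m$ gives $|H|\geq t^m(m/e)^m$. Combining the two inequalities and simplifying,
\[
k^{m-k}\leq (e/t)^m,\qquad\text{or equivalently}\qquad k^{1-k/m}\leq e/t.
\]

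Since the block system is non-trivial, $b\geq 2$ and hence $k\leq m/2$, so $1-k/m\geq 1/2$. Thus $\sqrt{k}\leq k^{1-k/m}\leq e/t$, which gives $k\leq (e/t)^2\leq (3/t)^2$ since $e<3$. The hypothesis $m\geq m_0$ plays only a safety role: the estimates $n!\leq n^n$ and $n!\geq(n/e)^n$ are valid for all $n\geq 1$, so in principle any $m_0\geq 4$ (to make a non-trivial block of size $\geq 2$ possible) would suffice.

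There is no real obstacle here beyond bookkeeping; the only point that requires mild care is the convention that "imprimitivity blocks" refers to blocks in a \emph{non-trivial} system (excluding the singleton partition, which always has $m$ blocks and would otherwise violate the bound). With that understood, the argument is a direct computation.
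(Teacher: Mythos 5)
Your proof is correct and follows essentially the same route as the paper's: both embed $H$ into the wreath product stabilizing the block system, bound its order by $n!\le n^n$, and compare against $m!\ge (m/e)^m$ (the paper uses $(m/3)^m\le m!$) together with the inequality $k\le m/2$ to conclude. Your observation about the "non-trivial block system" convention is also implicit in the paper's phrasing "If $H$ is not primitive."
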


\begin{proof}Let $a$ be the maximal possible number of blocks of imprimitivity. If $H$ is not primitive, then $a$ is a proper divisor of $m$ and thus $a \leq \frac{m}{2}$. Moreover $H$ is contained in the wreath product of $Sym(a)$ with $Sym(\frac{m}{a})$, so that $|H| \leq a! ((\frac{m}{a})!)^a$. Hence
\[
t^m m! \leq a^a \left(\frac{m}{a}\right)^m \leq a^{m/2} \left(\frac{m}{3}\right)^m \left(\frac{3}{a}\right)^m.
\]
But $(\frac{m}{3})^m \leq m!$ as soon as $m$ is large enough. So this yields $t^m \leq (\frac{9}{a})^{m/2}$ and the result follows.
\end{proof}

\begin{proof}[Proof of Lemma \ref{bochert}] By Lemma \ref{largeorb}, $H$ has an orbit $\Omega$ of size $m \geq \frac{d}{3}n$. The action of $H$ on $\Omega$ may not be primitive. Let $\Omega_0$ be a minimal block of imprimitivity, and $H_0$ the stabiliser of $\Omega_0$ in $H$. Let $K$ be the restriction of $H$ to $\Omega$ and $K_0$ the restriction of $H_0$ to $\Omega_0$. Note that $[Sym(\Omega):K] \leq [Sym(n):H] \leq d^{-n} \leq t^{-m}$. We may thus apply Lemma \ref{largeblock} to $K$ and get that the maximal number $a$ of blocks of imprimitivity is most $b=9t^{-2}$. In particular $[H:H_0] \leq b$. Similarly $[Sym(\Omega_0):K_0] \leq [Sym(n):H_0]\leq b d^{-n} \leq b t^{-m} \leq t^{-2m}$, and so $|K_0| \geq (\frac{m}{a})! t^{2m}$.

On the other hand, since the action of $K_0$ on $\Omega_0$ is primitive, Bochert's theorem \cite[Theorem 3.3B]{dixon-mortimer} implies that unless $K_0$ acts as $Sym(\Omega_0)$ or $Alt(\Omega_0)$ on $\Omega_0$ it satisfies $|K_0| \leq \frac{(m/a)!}{([(m/a+1)/2])!}$. Combining these two inequalities yields $(m/4a)^{m/4a} \leq t^{-2m}$, thus $m \leq 4a t^{-8a}$, and hence $n \leq \frac{12a}{d} t^{-8a}$. This implies that $n$ is bounded in terms of $\beta$ if $d \geq 1/(\log \log n)^{\beta}$.
\end{proof}

\bigskip

\subsection{Doubling close to the diameter and a question}
We end this section by asking a question.  Lemma \ref{lem:5adic.ph} shows that the almost flat condition implies doubling at some controlled scale. Moreover, doubling at some scale is enough to apply Theorem \ref{thm:bgt}, yielding the following.

\begin{prop}\label{doubling-structure} Consider a finite group $G$ with symmetric generating set $S$. Let $K\geq 1$. There is an integer $n_1=n_1(K)$ depending only on $K$ such that if $n$ is such that $n \geq n_1$ and
 $$|S^{2n+1}| \leq K |S^n|,$$
then $S^{8n}$ contains a normal subgroup $H$ of $G$ such that $G/H$ has a nilpotent subgroup whose index, rank and nilpotency class are bounded in terms of $K$ only.
\end{prop}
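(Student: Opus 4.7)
The plan is to deduce Proposition \ref{doubling-structure} essentially as a direct combination of Lemma \ref{doublingreduce} and Proposition \ref{prop:v.nilp}, both of which are already proved in Section \ref{app-sec}.

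First, I would use Lemma \ref{doublingreduce} to upgrade the hypothesis $|S^{2n+1}|\le K|S^n|$ to the form $|S^{5n}|\le C(K)|S^n|$, where $C(K)\ge K$ depends only on $K$. This is exactly the content of that lemma, and puts us in the setting needed to apply Proposition \ref{prop:v.nilp}.

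Next, with the constant $C(K)$ in hand, I would define
\[
n_1(K):=n_0(C(K)),
\]
where $n_0(\,\cdot\,)$ is the function produced by Proposition \ref{prop:v.nilp}. Assuming $n\ge n_1(K)$, Proposition \ref{prop:v.nilp} applied to $S$ with doubling constant $C(K)$ at the scale $n$ yields subgroups $H\leq\Gamma\leq G$ with the following properties: $H\subset S^{8n}$ is normal in $G$; the quotient $\Gamma/H$ is nilpotent of rank and step at most $O_{C(K)}(1)=O_K(1)$; and $[G:\Gamma]\leq n_0(C(K))$, which is a bound depending only on $K$.

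To finish, I would simply observe that since $H$ is normal in $G$ and contained in $\Gamma$, the quotient $\Gamma/H$ embeds naturally as a subgroup of $G/H$ of index $[G:\Gamma]$, giving the required nilpotent subgroup of $G/H$ whose index, rank, and nilpotency class are all bounded in terms of $K$ alone. There is no genuine obstacle here: the only mildly delicate point is keeping track that the containment $H\subset S^{8n}$ (rather than some larger power of $S$) survives from Proposition \ref{prop:v.nilp}, which it does because the step-doubling reduction in Lemma \ref{doublingreduce} only changes the constant $K$ to $C(K)$ and does not change the scale $n$.
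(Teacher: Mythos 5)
Your proposal is correct and is essentially identical to the paper's own proof: both apply Lemma \ref{doublingreduce} to upgrade the hypothesis to $|S^{5n}|\le C(K)|S^n|$ and then invoke Proposition \ref{prop:v.nilp} directly. The only thing you added is the explicit (and accurate) bookkeeping of $n_1(K)=n_0(C(K))$ and the observation that $\Gamma/H$ sits inside $G/H$ with bounded index.
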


\begin{proof} By Lemma \ref{doublingreduce}, $|S^{5n}| \leq C(K) |S^n|$ for some constant $C(K)$ depending on $K$ only. Now apply Proposition \ref{prop:v.nilp}.
\end{proof}

Note that the quotient $G/H$ is non-trivial whenever $n < \frac{1}{8}\gamma$, where $\gamma$ is the diameter of $G$ with respect to $S$.
\bigskip

\noindent \emph{Question:} Assume that $|S^{2n+1}| \leq K |S^n|$ holds for some $n \in [\frac{1}{8}\gamma, (1-\eps)\gamma]$, say. Is it true that $G$ has a normal subgroup $H\ne G$ such that $G/H$ has a nilpotent subgroup whose index, rank and nilpotency class are bounded in terms of $K,\eps$ only?

\section{Cheeger constant and spectral gap}\label{expanderbounds}

In this section we prove Corollaries \ref{smallcheeger}, \ref{cheeger-gap}, \ref{cover-group} and \ref{cover-manifold}. We first recall some definitions.

Let $\mathcal G$ be a finite connected $k$-regular graph. Given a set $A\subset V(\mathcal G)$, denote by $\partial A$ the \emph{edge boundary} of $A$, defined as the set of edges between $A$ and $V(\mathcal G)\backslash A$.  The \emph{Cheeger constant} $h(\mathcal G)$ of a graph $\mathcal G$ is defined by
\[
h(\mathcal G)=\inf\left\{\frac{|\partial A|}{|A|}:A\subset V(\mathcal G),|A|\le\frac{1}{2}|\mathcal G|\right\}.
\]

Recall that $\mathcal G$ is called an \emph{$\eps$-expander} if $h(\mathcal G) \geq \eps$; see the survey \cite{exp.survey} for detailed background. Expansion may be considered in terms of the spectrum of the Laplacian operator on the graph. Specifically, the Laplacian $\Delta$ on our connected $k$-regular graph $\mathcal G$ is an operator on the finite-dimensional vector space $\ell^2(\mathcal G)$, and is defined by
\[
\Delta f(x)=k f(x)-\sum_{y\sim x}f(y),
\]
where $y \sim x$ means that $y$ and $x$ are connected by an edge in the graph. The Laplacian is self-adjoint on $\ell^2(\mathcal G)$, and so its eigenvalues
\[
\lambda_0(\mathcal G)=0<\lambda_1(\mathcal G)\le\ldots\le\lambda_{|\mathcal G|-1}(\mathcal G)
\]
are real. The link between this and expansion as defined above is given by the \emph{discrete Cheeger--Buser inequality},  which states that
\begin{equation}\label{eq:cheeger}
\frac{1}{2}h(\mathcal G)^2 \leq \lambda_1(\mathcal G)\leq 2k \cdot h(\mathcal G).
\end{equation}
See the book \cite{lubotzky-book} for a proof of these inequalities and the references therein for further historical background. Furthermore, we record the following well-known additional inequalities, which together with $(\ref{eq:cheeger})$ imply $(\ref{groupbounds})$.

\begin{lemma}[Diameter versus expansion]\label{lem:expand.diam}
For every finite $k$-regular graph $\mathcal G$ we have
\[
h(\mathcal G) \leq\frac{4k\log|\mathcal G|}{\diam(\mathcal G)}.
\]
If $\mathcal{G}$ is vertex transitive then
$$\frac{1}{2 \diam(\mathcal G)} \leq h(\mathcal G).$$
\end{lemma}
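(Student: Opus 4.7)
For the first inequality I would use a ball-growth argument. The definition of $h := h(\mathcal{G})$ gives $|\partial A| \geq h|A|$ whenever $|A| \leq |\mathcal{G}|/2$, and since every vertex of $B(u,r+1) \setminus B(u,r)$ has at most $k$ neighbours in $B(u,r)$, this implies the multiplicative growth $|B(u,r+1)| \geq (1 + h/k)\,|B(u,r)|$ as long as $|B(u,r)| \leq |\mathcal{G}|/2$. Pick vertices $u, v$ realising the diameter $\gamma := \diam(\mathcal{G})$, and let $r_0, s_0$ be the least radii for which $B(u,r_0)$ and $B(v,s_0)$ each exceed $|\mathcal{G}|/2$. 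Two such balls cannot be disjoint, so $\gamma \leq r_0 + s_0$, while the growth inequality forces $(1+h/k)^{r_0 - 1} \leq |\mathcal{G}|/2$ and the analogous bound for $s_0$. Solving for $h$ using $\log(1+x) \geq (\log 2)\,x$ on $[0,1]$ yields the claimed bound $h \leq 4k \log|\mathcal{G}|/\gamma$ (the cases of small $\gamma$ or $h/k > 1$ reduce to the trivial bound $h \leq k$).

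For the lower bound in the vertex-transitive case I would run a canonical-paths / unit-flow argument, using vertex-transitivity to spread the flow evenly over oriented edges. Let $G \leq \Aut(\mathcal{G})$ act transitively on $V$. For each ordered pair of vertices $(a,b)$, choose a uniformly random geodesic $P_{a,b}$; by $G$-equivariance in distribution, the expected flow $F(e) := \mathbb{E}\,|\{(a,b) : e \in P_{a,b}\}|$ on each oriented edge $e$ is constant on $G$-orbits. The total flow satisfies
\[
\sum_e F(e) = \sum_{a,b} d(a,b) \leq \gamma\,|V|^2,
\]
and every $G$-orbit of oriented edges has size at least $|V|$ (given any $w \in V$ there is $g \in G$ with $g e$ starting at $w$, by transitivity on vertices). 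Hence $\max_e F(e) \leq \gamma |V|$. Now fix $A \subset V$ with $|A| \leq |V|/2$. Every pair $(a,b) \in A \times (V \setminus A)$ contributes at least one oriented edge of $\partial A$ to its geodesic, giving
\[
|\partial A| \cdot \max_e F(e) \;\geq\; \sum_{e \in \partial A} F(e) \;\geq\; |A|\,|V \setminus A| \;\geq\; |A||V|/2,
\]
and dividing yields $|\partial A| \geq |A|/(2\gamma)$, as required.

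The first part is routine once the ball growth is set up; only the constant in the $\log(1+\cdot)$ estimate requires any fiddling. The main conceptual point in the second part is that vertex-transitivity alone (rather than the stronger edge-transitivity) is enough to force the flow to be diluted enough across oriented edges; the essential observation is the orbit-size lower bound of $|V|$, which follows purely from transitivity on vertices. In the Cayley case this is transparent: the orbits of oriented edges under left translation are indexed by generators $s \in S$ and all have size exactly $|V|$, so one may dispense with the randomisation of the geodesic choice and work directly with a canonical word representation of each group element.
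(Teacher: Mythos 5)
Your proof is correct and the constants work out. For the first inequality you use exactly the same ball-growth argument as the paper: the Cheeger inequality forces $|B(u,r+1)|\ge(1+h/k)|B(u,r)|$ for small balls, balls of two diametrically opposite points must meet, and the rest is bookkeeping (the paper writes $1+h/k\ge e^{h/2k}$, you write $\log(1+h/k)\ge(\log 2)h/k$ — same estimate, marginally different constant).

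For the lower bound your argument is a genuinely different route from the paper's, and it is worth noting the contrast. The paper first establishes the $\ell^1$ variational characterisation $h=\inf_f \|\nabla f\|_1/\inf_m\|f-m\|_1$ (a coarea-type identity they prove separately) and then bounds $\inf_m\|f-m\|_1$ by averaging $|f(\sigma(x_0))-f(\sigma(y))|$ over the automorphism group $G=\Aut(\mathcal G)$ along a fixed geodesic from a base point to each $y$; the crucial input is that an edge stabiliser in $G$ has order at most $2|G|/|\mathcal{G}|$. You instead run a canonical-paths (flow) argument directly against the combinatorial definition of $h$: spread unit flow from each $a$ to each $b$ along a uniformly random geodesic, use $G$-equivariance in distribution to make the expected flow $F$ constant on $G$-orbits of oriented edges, and then bound $\max_e F(e)$ via the orbit-size lower bound $|G\cdot e|\ge|V|$, which follows from vertex-transitivity alone. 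Both proofs ultimately reduce to the same observation — orbits of edges under a vertex-transitive automorphism group are at least as large as $V$ — but your version bypasses the $\ell^1$ variational identity entirely, making it more self-contained (the paper wants that identity anyway, which is why they route through it). Your remark that the randomisation of geodesics can be dispensed with in the Cayley case, since left-translation orbits of oriented edges are indexed by $S$ and all have size exactly $|V|$, is a clean observation. Note also that your definition of the flow $F$ as an expectation is what makes the equivariance literally hold; a single fixed choice of geodesics would not give a $G$-invariant $F$, so the averaging (over geodesics for you, over $\Aut(\mathcal G)$ for the paper) is essential in both versions.
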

Lemma \ref{lem:expand.diam} is both standard (see e.g. \cite{dsc.diam} for a variant) and straightforward, but for completeness we present a short proof. Just as the proof of the Cheeger-Buser inequalities, this is based on the variational characterisation of $\lambda_1$ and $h$, which gives

\begin{equation}\label{varlam}
\lambda_1(\mathcal{G}) = \inf \left\{ \frac{\|\nabla f\|_2^2}{\|f\|_2^2} \, : \,\int f = 0\right\};
\end{equation}

\begin{equation}\label{varh}
h(\mathcal{G}) = \inf \left\{ \frac{\|\nabla f\|_1}{\inf_{m \in \R} \|f -m \|_1} \right\}.
\end{equation}
Here the infimums are taken over all real valued functions on the set of vertices of the graph. The norm $\|f\|_2$ is the $\ell^2$ norm, while $\|f\|_1$ denotes the $\ell^1$ norm, defined as usual by $\|f\|_1 = \sum_{x \in V(\mathcal{G})} |f(x)|$. Finally, $\nabla f$ is the non-negative function defined on the set of edges by $\nabla f (e)=|f(e^+) - f(e^-)|$, where $e^+$ and $e^-$ are the vertices of the edge $e$. The identity \eqref{varlam} may be found in \cite{lubotzky-book}. Since we do not have a readily available reference for \eqref{varh}, we provide a proof below. First let us use it to prove Lemma \ref{lem:expand.diam}.

\begin{proof}[Proof of Lemma \ref{lem:expand.diam}]
Abbreviate $h=h(\mathcal G)$, the Cheeger constant of $\mathcal G$. Given $x\in\mathcal G$ and $r>0$, write $B_r(x)$ for the closed ball of radius $r$ centred at $x$ for the graph metric on $\mathcal G$, and write $S_r(x)=B_r(x)\backslash B_{r-1}(x)$ for the sphere of radius $r$ centred at $x$.

By definition of $h$, if $|B_r(x)|\le\frac{1}{2}|\mathcal G|$ then we have $|\partial B_r(x)|\ge h|B_r(x)|$. Since each vertex of $S_{r+1}(x)$ is incident with at most $k$ edges in $\partial B_r(x)$ we have $|S_{r+1}(x)|\ge k^{-1}|\partial B_r(x)|$, which implies that
\[
|B_{r+1}(x)|\ge\left(1+\frac{h}{k}\right)|B_r(x)|.
\]
Noting that $\frac{h}{k}\le1$, and hence that $1+\frac{h}{k} \geq e^{\frac{h}{2k}}$, it follows immediately that $|B_r(x)|\ge\min\left\{\frac{1}{2}|\mathcal G|,e^{rh/2k}\right\}$. In particular, if $r\frac{h}{k}\ge2\log|\mathcal G|$ then the balls $B_r(x)$ and $B_r(y)$ have size at least $\frac{1}{2}|\mathcal G|$, and hence non-trivial intersection for every choice of $x$ and $y$. This proves the first inequality of the lemma.

Now assume that $\mathcal{G}$ is vertex transitive. Set $\gamma:=\diam(\mathcal{G})$, and write $G=\Aut(\mathcal{G})$, the (finite) group of automorphisms of the graph. Pick a base vertex $x_0$ in the graph and connect every other vertex $y$ to $x_0$ by a geodesic path, say $x_0=y_0,y_1,\ldots,y_n=y$, noting that $n\le\gamma$. For any real-valued function $f$ on the vertices of $\mathcal G$ we have
\[
\sum_{\sigma \in G} |f(\sigma(x_0)) - f(\sigma(y))| \leq \sum_1^n \sum_{\sigma \in G} |f(\sigma(y_{i-1})) - f(\sigma(y_i))| \le \|\nabla f\|_1 \sum_1^n |G_{e_i}|,
\]
where $G_{e_i}$ is the stabiliser of the edge $e_i$ between $y_{i-1}$ and $y_i$. Clearly $G_{e_i}$ has  a subgroup of index at most $2$ that fixes $y_i$, and hence $|G_{e_i}| \leq 2 |G|/|\mathcal{G}|$. Summing over all vertices $y$ of $\mathcal G$, it follows that
\[
\inf_{m\in\R} ||f-m||_1 \leq \frac{1}{|G|}\sum_{\sigma \in G} \|f - f(\sigma(x_0))\|_1
=\frac{1}{|G|}\sum_{\sigma \in G} \sum_{y \in \mathcal{G}} |f(\sigma(x_0)) - f(\sigma(y))| \leq  2\gamma\|\nabla f\|_1
\]
and so the lemma follows from $(\ref{varh})$.
\end{proof}

\begin{proof}[Proof of \eqref{varh}]
To see that $h(\mathcal G)$ dominates the right hand side, we simply pick the function $f:=1_{A}- 1_{\mathcal G \setminus A}$, where $A$ is a subset of vertices of size at most $|\mathcal G|/2$ realising the infimum in the definition of $h(\mathcal G)$. Then note that $\|\nabla f\|_1=2|\partial A|$ and that $\|f-m\|_1$ is minimised by taking $m=-1$, which makes its value equal to $2|A|$.
Now to verify the opposite inequality, given a function $f$ on the vertices consider the subsets of vertices $\Omega_t:=\{x:f(x)\geq t\}$ and let $m:=\inf\{t \in \R ;  |\Omega_t|\leq |\mathcal G|/2\}$. Note that if $t>m$, then $|\Omega_t|\leq |\mathcal G|/2$, while if $t\leq m$, then $|\mathcal G \setminus \Omega_t|\leq |\mathcal G|/2$. On the other hand, $\|f-m\|_1=\sum_{x \in \mathcal G}[ (f-m)^+(x) + (m-f)^+(x)]$, where $u^+=\max\{0,u\}$ is the positive part of the real number $u$. Hence
\[
\|f-m\|_1 = \int_{-\infty}^m |\mathcal G \setminus \Omega_t|dt + \int_m^{+\infty} |\Omega_t|dt \leq \frac{1}{h} \int_{-\infty}^{+\infty} |\partial \Omega_t|dt,
\]
since $\partial \Omega_t=\partial (\mathcal G \setminus \Omega_t)$. On the other hand, observe that for every edge $e$ we have $\nabla f(e) = \int_{-\infty}^{+\infty} 1_{\partial \Omega_t}(e) dt$, and so $\|\nabla f \|_1=\int_{-\infty}^{+\infty} |\partial \Omega_t|dt$.  The inequality follows.
\end{proof}

We now pass to the proofs of Corollaries \ref{smallcheeger} and \ref{cheeger-gap}.

\begin{proof}[Proof of Corollary \ref{smallcheeger}]
We have $\gamma^2 \geq \frac{1}{8\lambda_1}$ by the left hand side of $(\ref{groupbounds})$, and so if $\lambda_1 \leq 2^{-9}$ then $\gamma \geq 8$, and hence $\gamma^3\ge1/\lambda_1$. 
On the other hand, $\gamma \geq 1/2h$ by Lemma \ref{lem:expand.diam}, so if $h\leq\frac{1}{4}$ then $\gamma \geq 2$, and hence $\gamma^2 \geq 1/h$.
\end{proof}

To prove Corollary \ref{cheeger-gap} we use the following general lemma.

\begin{lemma}\label{lambdadoubling} Let $\mathcal G$ be the Cayley graph of a finite group $G$ with symmetric generating set $S$ and diameter $\gamma$. Then
$$\lambda_1(\mathcal G) \leq \frac{9|S|}{\gamma^2} \frac{|G|}{|S^{\gamma/3}|}$$
\end{lemma}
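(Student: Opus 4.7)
The plan is to apply the variational characterisation~\eqref{varlam} of $\lambda_1$ with the tent test function
\[
f(x)=\max\bigl(0,\,r-d(x,e)\bigr), \qquad r=\lfloor\gamma/3\rfloor,
\]
and to compute its Rayleigh quotient, writing $V=|S^{\gamma/3}|$ throughout.

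First I would bound the Dirichlet form. Since $f$ is integer-valued and $|d(xs,e)-d(x,e)|\le 1$ across every edge, one has $|\nabla f|\in\{0,1\}$; moreover $\nabla f$ vanishes on any edge both of whose endpoints lie outside $S^{r-1}$ (where $f=0$), and on any edge whose endpoints lie in the same sphere around $e$. Grouping the contributing edges according to the distance-sphere of their inner endpoint, I expect
\[
\|\nabla f\|_2^2\le|S|\,|S^{r-1}|\le|S|\,V.
\]

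Next I would bound $\|f-\bar f\|_2^2$ from below. Cauchy--Schwarz on the support of $f$ gives $\|f\|_2^2\ge\|f\|_1^2/|\operatorname{supp} f|\ge\|f\|_1^2/V$, and subtracting $|G|\bar f^2=\|f\|_1^2/|G|$ yields
\[
\|f-\bar f\|_2^2\ge\frac{\|f\|_1^2\,(|G|-V)}{|G|\,V}.
\]
An Abel summation identifies $\|f\|_1=\sum_{k=0}^{r-1}|S^k|$, and combining these estimates in the Rayleigh quotient gives
\[
\lambda_1\le\frac{\|\nabla f\|_2^2}{\|f-\bar f\|_2^2}\le\frac{|S|\,|G|\,V^2}{\|f\|_1^2\,(|G|-V)},
\]
so that the desired bound $\lambda_1\le 9|S|\,|G|/(\gamma^2V)$ reduces to an algebraic lower bound of the form $\|f\|_1^2\,(|G|-V)\gtrsim\gamma^2V^3$.

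The main obstacle I expect is producing this lower bound on $\|f\|_1$, which does not follow immediately from the monotonicity of $k\mapsto|S^k|$. In the principal regime $V\le|G|/2$, one would like to extract $\|f\|_1\gtrsim rV$ by exploiting that most of the sum $\sum_{k=0}^{r-1}|S^k|$ comes from the larger spheres with $k$ close to $r$; in the complementary regime $V>|G|/2$, the right tack seems to be to apply the same tent construction at the smaller radius $r'=\max\{k:|S^k|\le|G|/2\}$, which remains comparable to $\gamma$ because $|S^k|$ must grow from $1$ to something exceeding $|G|/2$ within the full radius $\gamma$. Once this lower bound is in hand, absorbing constants gives the stated inequality.
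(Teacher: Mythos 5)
The proposal is not complete, and the step you flag as the main obstacle is a genuine gap that the sketched workaround does not close. The inequality you would need, namely $\|f\|_1=\sum_{k=0}^{r-1}|S^k|\gtrsim rV$ with $V=|S^r|$, does not follow from monotonicity of $k\mapsto|S^k|$: if the balls grow geometrically, say $|S^k|\asymp 2^k$ over the relevant range, then $\sum_{k<r}|S^k|\asymp|S^r|=V$, which falls short of $rV$ by a factor of $r$, and nothing in the hypotheses of the lemma excludes such growth over the range $k\le\gamma/3$. Your adjustment in the regime $V>|G|/2$ is correct as far as it goes --- the inequality $r'\ge\gamma/2-1$ does hold, because two translates of a ball of size exceeding $|G|/2$ must intersect and hence $G=S^{2(r'+1)}$ --- but one then needs the same unproved lower bound $\sum_{k<r'}|S^k|\gtrsim r'|S^{r'}|$, so the gap simply moves.

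The structural reason a one-sided tent runs into trouble is that it has nonzero mean, so you are forced to control $\|f-\bar f\|_2^2$, and the Cauchy--Schwarz passage from $\|f\|_2^2$ down to $\|f\|_1^2/V$ discards exactly the information you then need. The paper's proof sidesteps this by choosing two antipodal points $a,b$ with $d(a,b)=\gamma$ and testing against $f(x)=d(x,a)-d(x,b)$. By vertex transitivity this is automatically mean-zero; each distance function is $1$-Lipschitz, so $|\nabla f|\le 2$ on every edge and $\|\nabla f\|_2^2\le 2|S||G|$; and the triangle inequality gives $|f|\ge\gamma/3$ on each of the two disjoint balls $aS^{\gamma/3}$ and $bS^{\gamma/3}$, so that $\|f\|_2^2\ge 2(\gamma/3)^2|S^{\gamma/3}|$ without any knowledge of how the smaller spheres are distributed. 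The bound then follows at once from \eqref{varlam}. If you prefer a tent construction, the mean-zero antisymmetrised pair $f(x)=\max\bigl(0,\tfrac{2\gamma}{3}-d(x,a)\bigr)-\max\bigl(0,\tfrac{2\gamma}{3}-d(x,b)\bigr)$ also satisfies $|f|\ge\gamma/3$ on those two balls and gives the same estimate; the essential point is that a single tent centred at one vertex is not enough.
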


\begin{proof} Pick $a,b \in G$ such that $d(a,b)=\gamma$, and let $f(x):=d(x,a)-d(x,b)$, where $d(x,y)$ is the graph distance in $\mathcal G$. Note that $f$ has zero mean and that $|\nabla f(e)| \leq 2$ for every edge $e$. Hence $\|\nabla f\|_2^2 \leq 2|S||G|$. On the other hand $|f(x)|\geq \gamma/3$ on the balls of radius $\gamma/3$ around $a$ and around $b$. Hence $\|f\|_2^2 \geq 2(\gamma/3)^2 |S^{\gamma/3}|$. The result therefore follows from $(\ref{varlam})$.
\end{proof}

\begin{proof}[Proof of Corollary \ref{cheeger-gap}] Set $K=5^{4/\eps}$, and let $n_0=n_0(K)$ be the constant appearing in the conclusion of Theorem \ref{thm:scales}. Provided $\gamma\ge n_0^4$, Lemma \ref{lem:5adic.ph} implies that there is $n$ satisfying $n_0\le n\le\gamma^{1/2}$ such that $|S^{5n}|\le|S^n|$. Theorem \ref{thm:scales} therefore implies that $|G|=|S^{\gamma}|\le O_K(1)|S^{\gamma/3}|$. The bound \eqref{eq:ch-gap.2} therefore follows from Lemma \ref{lambdadoubling}, and the bounds \eqref{eq:ch-gap.1} then follow from \eqref{groupbounds} and \eqref{eq:ch-gap.2}.
\end{proof}

\begin{remark}[Cheeger constant versus $\lambda_1$] We have shown that almost flat groups satisfy the strong form of the Buser inequality; that is to say $\lambda_1$ is comparable to the square of the Cheeger constant. This property does not characterise almost flatness. Indeed it is shared for example by the Cayley graph of the lamplighter group based on a finite cyclic group (see \cite{peres-revelle}), which is by no means almost flat. By contrast, there are Cayley graphs of finite groups at the opposite extreme permitted by \eqref{groupbounds}, which is to say $\lambda_1$ is comparable to the Cheeger constant itself, not its square. This happens, for example, for the lamplighter group based on $\Z/n^d\Z$ for any given $d \geq 3$. For all $d\geq 1$ the Cheeger constant is comparable to $1/\gamma$ (almost extremal sets are obtained by setting one fixed light off), but $\lambda_1$ is comparable to $1/\gamma^2$ if $d=1$, to $1/\gamma \log \gamma$ if $d=2$, and to $1/\gamma$ if $d \geq 3$ \cite{peres-revelle}.
\end{remark}

\begin{remark}[Expansion and number of generators] Almost flat groups are not expanders, since their $\lambda_1$ values are comparable to $1/\gamma^2$ by Corollary \ref{cheeger-gap}. In fact, in order to construct a Cayley graph of a finite group $G$ with a nilpotent subgroup of bounded index, rank and nilpotency class with a positive lower bound on $\lambda_1$, one would need at least a constant times $\log |G|$ generators. This follows immediately from Lemma \ref{lem:ab.quotient} and the corresponding result for abelian groups, which is proved in \cite[Propositions 3.9 \& 3.10]{lubotzky-weiss}. Note that this is extremal in some sense, because according to Alon and Roichman \cite[Theorem 1]{ar}, given $\eps >0$, every finite group $G$ has a generating set $S$ of size at most $C_\eps\log|G|$ with $\eps$-expanding generating set.
\end{remark}

We now pass to the proof of Corollaries \ref{cover-group} and \ref{cover-manifold}.

\begin{proof}[Proof of Corollary \ref{cover-group}] We fix a finite symmetric generating set for $\Gamma$ and consider the Cayley graphs of the quotients $\Gamma/\Gamma_n$ with respect to this generating set. The second inequality in Lemma \ref{lem:expand.diam} implies that $\diam (\Gamma/\Gamma_n) \geq \frac{1}{2}|\Gamma/\Gamma_n|^\eps$. Being finitely generated, $\Gamma$ has only finitely many subgroups of index at most $M$ for any given $M\geq 1$, and so it follows that $\Gamma/\Gamma_n$ is $\frac{\eps}{2}$-almost flat as soon as $n$ is large enough. It then follows from Theorem \ref{thm:gromovfinite} that $\Gamma/\Gamma_n$ has a quotient of diameter at least $\frac{1}{2}\diam(\Gamma/\Gamma_n)$ that has an abelian subgroup whose index is bounded independently of $n$. This implies that there is a finite-index subgroup $H \leq \Gamma$ that maps onto abelian groups of unbounded order, which in particular means that $H/[H,H]$ is infinite. Since $H/[H,H]$ is also finitely generated and abelian, it admits a surjective homomorphism onto $\Z$.
\end{proof}

The proof of Corollary \ref{cover-manifold}, the manifold version of Corollary \ref{cover-group}, is a simple application of the Brooks-Burger transfer principle, for which we refer the reader to the paper \cite{mantuano} and the appendix to the lecture notes \cite{breuillard-pcmi}, as well as the original sources \cite{brooks,burger}. This states that given a compact connected Riemannian manifold $M$, and a finite symmetric generating set $S$ of its fundamental group, there are positive constants $c_1,c_2$ such that for every finite-degree Riemannian cover $M'\to M$, we have
\begin{equation}\label{eq:lambda.fund.grp}
c_1 \lambda_1(M')\leq \lambda_1(\pi_1(M)/\pi_1(M'),S) \leq c_2 \lambda_1(M')
\end{equation}
where the $\lambda_1(M')$ denotes the first non-zero eigenvalue of the Laplace-Beltrami operator on $M'$ and $\lambda_1(\pi_1(M)/\pi_1(M'),S)$ that of the corresponding Schreier graph. Similarly,
\begin{equation}\label{eq:cheeger.fund.grp}
c_1 h(M')\leq h(\pi_1(M)/\pi_1(M'),S) \leq c_2 h(M').
\end{equation}

It is worth emphasising that the constants $c_1,c_2$ here depend only on the base manifold $M$ and choice of $S$, but not on $M'$.

\begin{proof}[Proof of Corollary \ref{cover-manifold}] This is just the combination of $(\ref{eq:cheeger.fund.grp})$ and Corollary \ref{cover-group}.
\end{proof}

\section{Moderate growth and mixing times}\label{sec:mod.growth}

In this section we prove Corollary \ref{cor:diac.sc}, which shows that almost flat groups and groups with moderate growth coincide. We then discuss mixing-time estimates for random walks on almost flat groups, and prove Corollary \ref{cor:mixing} and Theorem \ref{mix}.

We start with a crude, yet general, bound on the diameter of a Cayley graph (which is meaningful only when $S$ is large, since otherwise it is worse than the trivial bound $\diam_S(G) \leq |G|$).

\begin{lemma}\label{freimanlem} Let $G$ be a finite group and $S$ a symmetric generating set. Then $$\diam_S (G) \leq 2(|G|/|S|)^\frac{7}{4}.$$
\end{lemma}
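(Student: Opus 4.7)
The plan is to combine a dyadic pigeonhole argument on the ball sizes $|S^{2^k}|$ with the classical non-abelian version of Freiman's small-doubling theorem: if $A$ is a finite symmetric subset of a group and $|A^2|<\tfrac{3}{2}|A|$, then $A^2$ is a subgroup. Applied to $A=S^n$ (which is symmetric because $S$ is), this says that as soon as $|S^{2n}|<\tfrac{3}{2}|S^n|$, the set $S^{2n}$ is a subgroup of $G$; since it contains the generating set $S$, it must equal $G$, and therefore $\gamma:=\diam_S(G)\le 2n$.

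First I would set $N:=|G|/|S|$ and $L:=\lceil\log_2\gamma\rceil$, so that $S^{2^L}=G$, and look at the consecutive doubling ratios $R_k:=|S^{2^{k+1}}|/|S^{2^k}|$ for $k=0,\ldots,L-1$. Telescoping yields $\prod_{k=0}^{L-1}R_k=|S^{2^L}|/|S|\le N$, giving a multiplicative budget that I propose to spend against the Freiman threshold $3/2$.

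Then I would split into two cases. If some $R_k<3/2$, let $k^{*}$ be the smallest such index; by minimality, $R_0\cdots R_{k^{*}-1}\ge(3/2)^{k^{*}}$, and since this product equals $|S^{2^{k^{*}}}|/|S|\le N$, we get $k^{*}\le\log_{3/2}N$. Freiman's theorem then gives $\gamma\le 2^{k^{*}+1}\le 2N^{1/\log_2(3/2)}$. Otherwise every $R_k\ge 3/2$, and the same telescoping yields $(3/2)^L\le N$, hence $\gamma\le 2^L\le N^{1/\log_2(3/2)}$. In either case, $\gamma\le 2N^{1/\log_2(3/2)}$.

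The final step is the elementary numerical check $1/\log_2(3/2)\le 7/4$, which reduces to $2^{11}\le 3^{7}$, i.e.\ $2048\le 2187$; combined with $N\ge 1$ this delivers the target bound $\gamma\le 2N^{7/4}$. The only non-elementary input is Freiman's small-doubling theorem in an arbitrary (possibly non-abelian) group, so that is the main point requiring care; the rest is routine dyadic pigeonhole and arithmetic. Note in particular that the argument is quite wasteful (the sharp exponent coming out of it is $1/\log_2(3/2)\approx 1.71$), consistent with the lemma's being advertised as a ``crude, yet general'' bound.
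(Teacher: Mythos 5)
Your proof is correct and follows essentially the same route as the paper: dyadic pigeonhole on the ratios $|S^{2^{k+1}}|/|S^{2^k}|$ to locate the first scale with doubling below $3/2$, then Freiman's non-abelian small-doubling theorem to conclude that the corresponding power of $S$ is (or is trapped inside) the whole group, followed by the same numerical check $2^{11}\le 3^7$. The only cosmetic difference is that you invoke the symmetric-set formulation of Freiman ($A^2$ is itself a subgroup) whereas the paper uses the coset formulation and then observes $|S^{2^m}|\ge\frac{2}{3}|G|$ forces $S^{2\cdot 2^m}=G$; both give the identical bound.
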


\begin{proof}
Let $m$ be the least integer such that $|S^{2^{m+1}}|\le\frac{3}{2}|S^{2^m}|$, noting that this implies that $|G|/|S|\ge\left(\frac{3}{2}\right)^m$. Freiman's lemma \cite{freiman-PAMS} shows that the bound $|S^{2^{m+1}}|\le\frac{3}{2}|S^{2^m}|$ implies that $S^{2^m}$ is contained in a coset of a subgroup $H\le G$ satisfying $|H| \leq \frac{3}{2}|S^{2^m}|$. Since $1\in S$, we in fact have $S^{2^m}\subset H$, and since $S$ generates $G$ this implies that $H=G$, and in particular that $|S^{2^m}|\ge\frac{2}{3}|G|$. This in turn implies that $S^{2.2^m}=G$, and hence that $\diam_S(G)\le2.2^m$. The result follows since $\left(\frac{3}{2}\right)^\frac{7}{4}\ge2$.
\end{proof}

\begin{proof}[Proof of Corollary \ref{cor:diac.sc}] In the first direction, suppose that $(G,S)$ has $(A,d)$-moderate growth, which is to say that $(\ref{moderatedef})$ holds for all $n\geq 1$. The case $n=1$ implies that $\gamma \geq A^{-1/d} (|G|/|S|)^{1/d}$. However, $\gamma \leq 2(|G|/|S|)^2$ by Lemma \ref{freimanlem}, so $(G,S)$ is $\frac{1}{2d}$-almost flat whenever $\gamma \geq 2A^4$.

In the converse direction, suppose that $(G,S)$ is $\eps$-almost flat, let $K=5^{\frac{4}{\eps}}$, and let $n_0=n_0(K)$ be the integer given by Theorem \ref{thm:scales}. If $\gamma<n_0^4$ then the almost flat assumption implies that for every $n\le\gamma$ we have
\[
|S^n|\ge|S|\ge\left(\frac{n}{n_0^4}\right)^{\varepsilon^{-1}}|S|\ge\frac{1}{n_0^{4\varepsilon^{-1}}}\left(\frac{n}{\gamma}\right)^{\varepsilon^{-1}}|G|,
\]
and so $G$ has $(n_0^{4\varepsilon^{-1}},\varepsilon^{-1})$-moderate growth. We may therefore assume that $\gamma\ge n_0^4$. Lemma \ref{lem:5adic.ph} therefore implies that there is some $n$ satisfying $n_0\le n\le\gamma^{1/2}$ such that $|S^{5n}|\leq K |S^{n}|$, and so Theorem \ref{thm:scales} implies that $|S^{5n}|\leq \theta(K)^5 |S^n|$ for all $n \geq \gamma^{1/2}$. Given $n\ge\gamma^{1/2}$, therefore, writing $r$ for the smallest integer such that $5^r n \geq \gamma$ we have $|S^{5^r n}| \leq \theta(K)^{5r} |S^{n}|$ and $\theta(K)^{5r} \leq \theta(K)^{5+ 5\log_5(\gamma/n)}=\theta(K)^{5}(\gamma/n)^{5 \log_5(\theta(K))}$, and so
\begin{equation}\label{eq:mg}
|S^n| \geq \frac{1}{A}\left(\frac{n}{\gamma}\right)^d|G|
\end{equation}
holds with $A=\theta(K)^5$ and $d=5\log_5 \theta(K)$. When $n \leq \gamma^{1/2}$, on the other hand, the almost flat assumption implies that $|S|/|G|\ge1/\gamma^{\varepsilon^{-1}}$, and so
\[
\frac{|S^n|}{|G|}\ge\frac{|S|}{|G|}\ge\frac{1}{\gamma^{\varepsilon^{-1}}}
=\left(\frac{\gamma^{1/2}}{\gamma}\right)^{2\varepsilon^{-1}}\ge\left(\frac{n}{\gamma}\right)^{2\varepsilon^{-1}}.
\]
Thus \eqref{eq:mg} holds with $d=2\varepsilon^{-1}$ and $A=1$.
\end{proof}

The main result of the work \cite{dsc} of Diaconis and Saloff-Coste shows that groups with moderate growth have mixing time quadratic in the diameter. Inspired by this, we spend the rest of this section discussing various other sufficient conditions on the Cayley graph of a finite group for its mixing time to be quadratic in the diameter.

We should emphasise that the constants in the results of Diaconis and Saloff-Coste are effective in the moderate-growth parameters of the group in question; the ineffectiveness of the constants appearing in the results of this section arise only due to our reliance on Theorem \ref{thm:bgt}.

\bigskip

By definition, the $L^p$ mixing time $T_p$ is the first integer $n$ such that $\|\mu_S^n - \mu_G\|_p \leq \frac{1}{10} \|\mu_G\|_p$, where $\mu_S$ is the uniform probability measure on the generating set $S$ and $\mu_G$ the uniform probability measure on $G$. We view them as functions on $G$ taking non-negative values. Of course, the choice of factor $\frac{1}{10}$ is purely a matter of convention. It is customary also to define the relaxation time $T_{rel}$ as the inverse spectral gap, or more specifically $T_{rel}=1/(1-\beta_S)$, where $\beta_S$ is the norm of the operator $1-\Delta/|S|$. For simplicity, we will assume that $\beta_S=1-\lambda_1/|S|$, which amounts to assuming that $\Delta$ has no eigenvalue too close to $2|S|$. This is the case if $\lambda_1 \leq 2$ and $1 \in S$.

The mixing times $T_p$ dominate the relaxation time $T_{rel}$ and are non-decreasing in $p$, and in fact there are essentially only two mixing times, $T_1$ and $T_\infty$, because $T_p$ is comparable to $T_\infty$ if $p>1$. The example of the lamplighter group with cyclic base shows that $T_\infty$ really can be much larger than $T_1$ \cite{haggstrom-jonasson,peres-revelle}. Writing $\gamma=\diam_S(G)$, we have $\frac{1}{2}\diam_S(G)\le T_p\le O_{|S|}(\gamma^3)$, and the same lamplighter example shows that the upper bound can indeed be reached for $T_\infty$ \cite{haggstrom-jonasson,peres-revelle}. It is not known, however, whether $T_1$ is always $O_{|S|}(\gamma^2)$. We summarise these facts and some others in the following proposition, which is well known and encapsulates the basic properties of mixing times on Cayley graphs.

%%%%%%The mixing times $T_p$ dominate the relaxation time $T_{rel}$ and are non-decreasing in $p$. In fact, there are essentially only two mixing times, $T_1$ and $T_\infty$, because $T_p$ is comparable to $T_\infty$ if $p>1$. Clearly $T_p \geq \diam_S(G)$, and the bound $(\ref{groupbounds})$ shows that $T_{rel} \leq 8|S|\gamma^2$, where $\gamma=\diam_S(G)$ is the diameter. On the other hand in general one can only claim that $T_\infty  = O_S(\gamma^3)$, and the example of the lamplighter group with cyclic base shows that the upper bound can be reached, and indeed (\cite{peres-revelle, haggstrom-jonasson}) but  it is not known whether $T_1$ is always $O_{|S|}(\gamma^2)$. We summarize these facts in the following proposition, which is well-known and encapsulates the basic properties of mixing times on Cayley graphs.

\begin{prop}[Basic facts on mixing times]\label{basicmixing} Let $G$ be a finite group with symmetric generating set $S$ containing $1$ and denote by $\gamma$ the diameter of the Cayley graph. Assume that $\lambda_1\leq 2$, and set $\beta_S:=1-\lambda_1/|S|$. Then, for $p \in [1,\infty]$ and $n\in\N$, the following conditions hold.
\begin{enumerate}
\item \label{mix1}The quantity $\|\mu_S^{(n)} -\mu_G\|_p$ is non-increasing in $n$.
\item \label{mix2}The quantity $\|\mu_S^{(n)} -\mu_G\|_p/\|\mu_G\|_p$ is non-decreasing in $p$, and hence so is $T_p$.
\item \label{basicin-lower} We have $\beta_S^n  \leq   \|\mu_S^{(n)} -\mu_G\|_p/\|\mu_G\|_p$.
\item \label{basicin-upper} We have
\[
\frac{\|\mu_S^{(2n)} -\mu_G\|_p}{\|\mu_G\|_p } \leq \left( \frac{ \|\mu_S^{(n)} -\mu_G\|_2}{\|\mu_G\|_2} \right)^2.
\]
\item \label{basicin2}We have $\|\mu_S^{(n)} -\mu_G\|_2 \leq \beta_S^n$.
\item \label{mix6}For every $p>1$ there exists $C(p)>0$ such that $T_\infty\le C(p)T_p$. For $p\ge2$ we may take $C(p)=2$.
\item \label{mix.half.diam}We have $T_p\ge\frac{1}{2}\gamma$ for all $p$, and $T_\infty\ge\gamma$.
\item \label{mix7}We have $T_2\leq 8|S|\gamma^2 \log |G| \leq 8|S|\log |S| \gamma^3$.
\item \label{mix8}We have $T_{rel}\le\min\{T_1,8|S|\gamma^2\}$.
\end{enumerate}
\end{prop}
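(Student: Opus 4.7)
The plan is to derive all nine items from two underlying ingredients: contractivity of convolution by a probability measure on $\ell^p(G)$, and the spectral theory of the convolution operator $P\colon f\mapsto\mu_S*f$ on $\ell^2(G)$. Throughout I would write $f_n:=\mu_S^{(n)}-\mu_G$; since $\mu_G$ is $P$-invariant, $f_n=P^n f_0$ with $f_0=\delta_e-\mu_G$ mean-zero, and symmetry of $S$ makes $P$ self-adjoint on $\ell^2$.

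Items (\ref{mix1}) and (\ref{mix2}) are pure convexity. For (\ref{mix1}) I would invoke the contractivity of convolution with the probability $\mu_S$ on $\ell^p$. For (\ref{mix2}) I would rewrite $\|f_n\|_p/\|\mu_G\|_p=\|f_n/\mu_G\|_{L^p(\mu_G)}$ and apply Jensen on the probability space $(G,\mu_G)$; monotonicity of $T_p$ is then immediate.

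The heart of the proof is the $\ell^2$ package (\ref{basicin-lower})–(\ref{basicin2}) and its corollary (\ref{mix6}). For (\ref{basicin2}), $\beta_S$ is by definition the operator norm of $P$ on mean-zero functions, so iterating $f_n=P^nf_0$ with $\|f_0\|_2\le 1$ gives $\|f_n\|_2\le\beta_S^n$. For (\ref{basicin-lower}), I would test against a $P$-eigenfunction $\phi$ of eigenvalue $\pm\beta_S$ with mean zero and use Young's inequality: $\beta_S^n\|\phi\|_q=\|f_n*\phi\|_q\le\|f_n\|_1\|\phi\|_q$, yielding $\beta_S^n\le\|f_n\|_1/\|\mu_G\|_1$, and upgrade to general $p$ via (\ref{mix2}). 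The squaring inequality (\ref{basicin-upper}) is the subtlest step: symmetry of $S$ gives $f_n*f_n=f_{2n}$ and hence the Plancherel identity $f_{2n}(e)=\|f_n\|_2^2$; Cauchy--Schwarz then forces $|f_{2n}(x)|=\bigl|\sum_y f_n(y)f_n(y^{-1}x)\bigr|\le\|f_n\|_2^2$ for \emph{every} $x$, so $\|f_{2n}\|_\infty\le\|f_n\|_2^2$, and dividing by $\|\mu_G\|_\infty=1/|G|$ gives the $p=\infty$ case; the general case follows from (\ref{mix2}). Item (\ref{mix6}) for $p\ge 2$ is then the one-line consequence of applying (\ref{basicin-upper}) at $n=T_p$.

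Finally, items (\ref{mix.half.diam}), (\ref{mix7}) and (\ref{mix8}) combine the $\ell^2$ bounds with combinatorial inputs. For (\ref{mix.half.diam}), $\mu_S^{(n)}$ is supported in $S^n$, so $n<\gamma$ forces $\|f_n\|_\infty\ge\|\mu_G\|_\infty$ and hence $T_\infty\ge\gamma$; to get $T_p\ge\gamma/2$ I would choose $g$ with $d(e,g)=\gamma$ and note $gS^n\cap S^n=\emptyset$ whenever $2n<\gamma$ (otherwise $g\in S^{2n}$), forcing $|S^n|\le|G|/2$ and therefore $\|f_n\|_p/\|\mu_G\|_p\ge(1/2)^{1/p}>1/10$. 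For (\ref{mix7}) and the second half of (\ref{mix8}), I would feed the Cheeger--Buser bound $\lambda_1\ge 1/(8\gamma^2)$ from Lemma \ref{lem:expand.diam} and \eqref{eq:cheeger} into the machinery: it yields $T_{rel}=|S|/\lambda_1\le 8|S|\gamma^2$ directly, and combined with (\ref{basicin2}) via $-\log\beta_S\ge\lambda_1/|S|\ge 1/(8|S|\gamma^2)$ it gives $\|f_n\|_2\le\exp(-n/(8|S|\gamma^2))$, so requiring this to drop below $\frac{1}{10\sqrt{|G|}}$ produces $T_2\le 8|S|\gamma^2\log|G|$, with the second form of (\ref{mix7}) following from the trivial estimate $|G|\le|S|^\gamma$. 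The bound $T_{rel}\le T_1$ in (\ref{mix8}) is the only slightly fiddly point: from (\ref{basicin-lower}) one has $\beta_S^{T_1}\le 1/10$, and a short calculation with $e^{-(\log 10)/T_1}\le 1-1/T_1$ (valid for $T_1\ge 2$) then gives $1-\beta_S\ge 1/T_1$, i.e.\ $T_{rel}\le T_1$. The main obstacle is (\ref{basicin-upper}): the Plancherel identity only controls the single value $f_{2n}(e)$, and it is the extra Cauchy--Schwarz step, which crucially uses the symmetry of $\mu_S^{(n)}$, that converts this into a uniform $\ell^\infty$ bound.
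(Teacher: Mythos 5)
Your overall architecture matches the paper's: deduce items (1)--(5) from contractivity, spectral theory of $P=P_{\mu_S}$ on $\ell^2$, and the convolution identity $(\mu_S-\mu_G)^{(2n)}=\mu_S^{(2n)}-\mu_G$; then feed in the Cheeger--Buser bound $\lambda_1\ge 1/(8\gamma^2)$ for (7)--(9). Most items go through, and you take a slightly different (but equally valid) route in (3), where you use Young's inequality against the $p=1$ case plus the monotonicity (2) rather than the paper's direct duality pairing with the $\ell^\infty$ norm of the eigenfunction.

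There is, however, a genuine gap in item (6). The statement asserts $T_\infty\le C(p)T_p$ for \emph{every} $p>1$, but your argument, which applies (4) at $n=T_p$ and then needs (2) to pass from the $\ell^p$ bound to the $\ell^2$ bound, only works when $p\ge 2$; you explicitly restrict to that range and say nothing about $1<p<2$. The paper's treatment of $1<p<2$ uses the self-convolution identity together with Young's inequality in the form $\|f_{2n}\|_{p/(2-p)}\le\|f_n\|_p^2$, which after normalisation gives $T_{p/(2-p)}\le 2T_p$; since the map $p\mapsto p/(2-p)$ strictly increases and escapes the interval $(1,2)$ after finitely many iterations (the gap $p-1$ at least doubles each step), this bootstraps to $T_2\le C'(p)T_p$ and then the $p\ge2$ case finishes. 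Without this iteration, your proof does not cover the range $1<p<2$.

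Two minor remarks. First, in (4) you single out the Plancherel identity $f_{2n}(e)=\|f_n\|_2^2$ and assert that the Cauchy--Schwarz step ``crucially uses the symmetry of $\mu_S^{(n)}$''; in fact the Cauchy--Schwarz estimate $|\sum_y f_n(y)f_n(y^{-1}x)|\le\|f_n\|_2\big(\sum_y|f_n(y^{-1}x)|^2\big)^{1/2}=\|f_n\|_2^2$ needs only the substitution $z=y^{-1}x$ and no symmetry at all (and the identity $(\mu_S-\mu_G)^{(2)}=\mu_S^{(2)}-\mu_G$ likewise needs only $\mu_S*\mu_G=\mu_G*\mu_S=\mu_G$); the symmetry is relevant only to the Plancherel-at-$e$ remark, which your argument never uses. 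Second, in (9) your numeric inequality $e^{-(\log 10)/T_1}\le 1-1/T_1$ requires $T_1\ge2$; this is harmless because (7) gives $T_1\ge\gamma/2$, but it is worth saying.
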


We include the straightforward proof for the reader's convenience.

\begin{proof}
Condition \eqref{mix1} holds with $\|\,\cdot\,\|_p$ replaced by any $G$-invariant norm $\|\,\cdot\,\|$, since the triangle inequality implies that $\|\mu_S*f\|\le\|f\|$ for every $f:G\to\R$. To prove \eqref{mix2}, assume that $p<p'<\infty$ and note that the function $u\mapsto u^{p'/p}$ is convex, and hence that we may  apply Jensen's inequality in the form $\mu_G(f^{p'/p}) \geq (\mu_G(f))^{p'/p}$ to the function $f(x)=\mu_S^{(n)}(x)-\mu_G(x)$, from which \eqref{mix2} follows (the case $p'=\infty$ being a trivial calculation).

For the bound \eqref{basicin-lower}, let $q$ be such that $1/p+1/q=1$, so that by duality we have
\begin{equation}\label{dual}
\|\mu_S^{(n)} - \mu_G\|_p=\sup_{\|f\|_q\leq 1} |\mu_S^n(f)-\mu_G(f)|.
\end{equation}
Let $f$ be an eigenfunction of the Laplacian with eigenvalue $\lambda_1$, translated so that $f(1)=\|f\|_\infty$ and scaled so that $\|f\|_q=1$. Note that $\mu_G(f)=0$, and also that $\mu_S^{(n)}*f = \beta_S^n f$, and hence that $\mu_S^{(n)}(f) = \beta_S^n f(1)=\beta_S^n\|f\|_\infty$. It therefore follows from \eqref{dual} that $\|\mu_S^{(n)} - \mu_G\|_p\ge\beta_S^n\|f\|_\infty\ge\beta_S^n\|f\|_q|G|^{-1/q}=\beta_S^n\|\mu_G\|_p$, and so \eqref{basicin-lower} follows.

It is enough to prove \eqref{basicin-upper} for $p=\infty$, but this follows from the Cauchy-Schwarz inequality applied to the obvious identity $(\mu_S^{(2n)} -\mu_G )(x) =  (\mu_S -\mu_G)^{(2n)}(x) = \sum_y (\mu_S -\mu_G)^{(n)}(xy^{-1})(\mu_S -\mu_G)^{(n)}(y)$, which is valid for every $x \in G$, and yields
$\|\mu_S^{(2n)} -\mu_G\|_\infty \leq \|\mu_S^{(n)} -\mu_G (x)\|_2^2$, and hence \eqref{basicin-upper} after observing that $\|\mu_G\|_\infty=\|\mu_G\|_2^2$.

To see $(\ref{basicin2})$, write $P_{\mu_S}$ for the operator of left convolution by $\mu_S$, and note that \eqref{dual} gives
\begin{equation}\label{l2}
\|\mu_S^{(n)} - \mu_G\|_2 = \sup_{\|f\|_2\leq 1, \mu_G(f)=0} |P_{\mu_S}^n f (1)|\leq \|P_{\mu_S}^n\|_2 =\beta_S^n.
\end{equation}

It is trivial that $T_\infty\ge\gamma$, so to prove \eqref{mix.half.diam} it suffices to check that $T_1>\frac{1}{2}\gamma$. To see this, note that the ball of radius $T_1$ contains at least half of the elements of $G$, since otherwise $\|\mu_S^{T_1}-\mu_G\|_1\ge\frac{1}{2}$, and so the ball of radius $2T_1$ is all of $G$.

The fact that \eqref{mix6} holds with $C(p)=2$ for $p\ge2$ follows from \eqref{mix2} and \eqref{basicin-upper}. When $p\in(1,2)$, applying Young's inequality to the identity $(\mu_S^{(2n)} -\mu_G ) =  (\mu_S^{(n)} -\mu_G)^{(2)}$ implies that $\|\mu_S^{(2n)} -\mu_G \|_{\frac{p}{2-p}} \leq \|\mu_S^{(n)} -\mu_G\|_p^2$, and hence that $T_{\frac{p}{2-p}} \leq 2T_p$. Iterating this a finite number of times if necessary, we see that $T_2\leq C'(p) T_p$ for some constant $C'(p)>0$, and \eqref{mix6} follows.

The first inequality of \eqref{mix7} follows immediately from $(\ref{basicin2})$ and the lower bound on $\lambda_1$ from $(\ref{groupbounds})$, while the second inequality follows from the fact that $|G|=|S^\gamma|\leq |S|^\gamma$. Finally, to prove \eqref{mix8}, note that \eqref{groupbounds} shows that $T_{rel} \leq 8|S|\gamma^2$, while the fact that $T_{rel}\le T_1$ follows from \eqref{basicin-lower} and the assumption that $\lambda_1\le2$.
\end{proof}

\begin{proof}[Proof of Corollary \ref{cor:mixing}]  The lower bound follows immediately from $(\ref{basicin-lower})$ and the bound \eqref{eq:ch-gap.2} from Corollary \ref{cheeger-gap}. For the upper bound, we need to improve Proposition \ref{basicmixing} \eqref{basicin2} into $ \|\mu_S^{(n)} -\mu_G\|_2 \leq B \beta_S^n \|\mu_G\|_2$. Getting this extra factor $\|\mu_G\|_2$ is the heart of the matter, and it was established by Diaconis and Saloff-Coste in \cite[Theorem 3.2]{dsc} for groups of moderate growth with an explicit dependence of the constant $B$ in terms of the moderate growth parameters $A$ and $d$. Corollary \ref{cor:diac.sc} implies that $G$ has such parameters depending only on $\eps$, and so the desired bound follows. (The reader should note that, while \cite[Theorem 3.2]{dsc} is stated for $\|\mu_S^{(n)} -\mu_G\|_1$, it also holds for $\|\mu_S^{(n)} -\mu_G\|_2/\|\mu_G\|_2$ by Remark 2 on page 6 of \cite{dsc}.)
\end{proof}

We now discuss and prove Theorem \ref{mix}. Almost flat groups or groups of moderate growth are doubling at at least one scale $\leq \gamma^{\delta}$ for any fixed $\delta>0$ by Lemma \ref{lem:5adic.ph}, and it is legitimate to ask whether this property alone is enough to give a mixing time quadratic in $\gamma$. The following reformulation of Theorem \ref{mix} shows that if a Cayley graph is doubling at one scale $\leq \gamma^{2/3}$ then this is indeed enough to guarantee quadratic mixing time, thus extending the validity of the Diaconis--Saloff-Coste theorem \cite{dsc}.

\begin{theorem}[Doubling and quadratic mixing time]\label{doubling-quadratic}  Let $G$ be a finite group with symmetric generating set $S$ and denote by $\gamma$ the diameter of the Cayley graph. Let $K\geq 1$ be a parameter. Then there is a constant $C_K>0$ depending on $K$ only such that if $|S^{2n+1}|\leq K |S^n|$ for some $n\geq C_K$ the
 \begin{itemize}
 \item if $n\leq  \gamma/3$, then $T_{rel}$ is quadratic in the sense that $T_{rel} \geq \gamma^2 /C_K$;
 \item if $n \leq \gamma^{2/3}$, then $T_p$ is also quadratic for each $p \in [1,\infty]$, in the sense that $T_p = O_{K,|S|}(\gamma^2)$.
 \end{itemize}
\end{theorem}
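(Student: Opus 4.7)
The statement has two parts. The lower bound $T_{rel} \geq \gamma^2/C_K$ (assuming $n \leq \gamma/3$) depends on $K$ only, while the upper bound $T_p = O_{K,|S|}(\gamma^2)$ (under the stronger $n \leq \gamma^{2/3}$) brings in the generating set size.

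For the lower bound, my plan is to combine Lemma \ref{lambdadoubling} with Theorem \ref{thm:scales}. Using Lemma \ref{doublingreduce} to upgrade $|S^{2n+1}| \leq K|S^n|$ to $|S^{5n}| \leq C(K)|S^n|$, and arranging $C_K \geq n_0(C(K))$, Theorem \ref{thm:scales} applied with $m = \lceil \gamma/3 \rceil \geq n$ and $c = 3$ gives $|G| = |S^\gamma| \leq \theta(C(K))^3 |S^{\gamma/3}|$. Lemma \ref{lambdadoubling} then yields $\lambda_1 \leq 9\theta(C(K))^3 |S|/\gamma^2$, hence $T_{rel} = |S|/\lambda_1 \geq \gamma^2/(9\theta(C(K))^3)$, which is the first bullet.

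For the upper bound, my plan is to reduce to the Diaconis--Saloff-Coste quadratic mixing-time bound for groups of moderate growth. Exactly as in the converse direction of Corollary \ref{cor:diac.sc}, the $5^r$-bootstrap based on Theorem \ref{thm:scales} shows that doubling at scale $n$ implies
\[
|S^j|/|G| \geq A^{-1}(j/\gamma)^d \qquad \text{for all } j \in [n,\gamma],
\]
with $A = \theta(C(K))^5$ and $d = 5\log_5 \theta(C(K))$. For $j < n$, the trivial estimate $|S^j| \geq 1$ combined with $|G| \leq A(\gamma/n)^d |S|^n$ extends the moderate-growth inequality to all $j \leq \gamma$ at the cost of an enlarged constant $A'$ satisfying $\log A' = O_K(n\log|S| + \log\gamma)$. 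The hypothesis $n \leq \gamma^{2/3}$ keeps $\log A' = O_K(\gamma^{2/3}\log|S|)$ sub-polynomial in $\gamma$. Plugging $(A',d)$ into \cite[Theorem 3.2]{dsc} then gives an improved $L^2$ heat-kernel decay $\|\mu_S^{(k)} - \mu_G\|_2 \leq B \beta_S^k \|\mu_G\|_2$ with $B = B(K,|S|)$. Combining this with the relaxation bound $1/(1-\beta_S) = O_K(\gamma^2/|S|)$ furnished by Corollary \ref{cheeger-gap} (whose doubling hypothesis is met by Theorem \ref{thm:scales}) gives $T_2 = O_{K,|S|}(\gamma^2)$, and the bounds for $T_p$ with $p \in [1,\infty]$ then follow from Proposition \ref{basicmixing} parts \eqref{mix2}, \eqref{basicin-upper}, and \eqref{mix6}.

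The main obstacle will be ensuring that the inflated parameter $A'$ genuinely absorbs into $O_{K,|S|}(1)$. In the DSC mixing bound, $A'$ typically enters as $(A')^{2/d}$ or inside a logarithm, and with $d = O_K(1)$ the former could be super-polynomial in $\gamma$. The resolution is to optimize $d$ adaptively: increasing $d$ weakens the moderate-growth inequality on $[n,\gamma]$ (which still holds by monotonicity in the exponent) but substantially shrinks $(A')^{2/d}$, so that for a suitably chosen $d = d(K,|S|)$ the final bound depends on $K$ and $|S|$ only, not on $\gamma$. The sharpness of the $2/3$ exponent, witnessed by Example \ref{sharpex}, indicates that this trade-off is tight and that any relaxation to $n \leq \gamma^{2/3 + \eps}$ would make $\log A'$ too large to be so absorbed.
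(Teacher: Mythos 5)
Your argument for the first bullet is correct and matches the paper: reduce to $|S^{5n}|\leq C(K)|S^n|$ via Lemma \ref{doublingreduce}, propagate with Theorem \ref{thm:scales} to get $|G|\leq O_K(1)|S^{\gamma/3}|$, and feed this into Lemma \ref{lambdadoubling}. No issues there.

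The second bullet is where the proposal departs from the paper, and unfortunately the gap you yourself flag is fatal, not a technicality to be tuned away. Your plan extends the moderate-growth inequality from scales $[n,\gamma]$ down to $[1,\gamma]$ by inflating $A$ to $A'$ with $\log A' = O_K(n\log|S|)$, and then applies \cite[Theorem 3.2]{dsc} directly to $(G,S)$. The DSC heat-kernel constant $B(A,d)$ is polynomial in $A$ for fixed $d$ and at least exponential in $d$ (it arises from a Nash-type iteration). With $n$ as large as $\gamma^{2/3}$ we have $A' = |S|^{\Theta_K(\gamma^{2/3})}$, so for bounded $d = d(K,|S|)$ the factor $(A')^{\Theta(1/d)}$ remains of size $|S|^{\Theta_K(\gamma^{2/3})}$ — super-polynomial in $\gamma$, not $O_{K,|S|}(1)$. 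Conversely, pushing $d$ up to $\Theta(\gamma^{2/3})$ to suppress the exponent makes the $d$-dependence of $B(A,d)$ blow up instead (exponentially or worse in $d$). There is no choice of $d$ that trades these off into an $O_{K,|S|}(1)$ constant. The monotonicity you invoke does let you weaken the exponent, but it cannot create the cancellation you need; the obstruction is real.

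The paper sidesteps this entirely by never trying to force moderate growth on $(G,S)$ at small scales. Instead it uses Proposition \ref{doubling-structure} to produce a normal subgroup $H\subset S^{8\gamma^{2/3}}$, decomposes $\ell^2(G) = \ell^2(G/H)\oplus \ell^2(G/H)^\perp$, and bounds the two pieces separately: the quotient $G/H$ is genuinely $\eps$-almost flat with $\eps=\eps(K,|S|)$ (via Proposition \ref{prop:ab.converse} and Lemma \ref{lem:rei-schr}), so Corollary \ref{cor:mixing} applies there with constants depending only on $K,|S|$; and on $\ell^2(G/H)^\perp$ the spectral gap is at least $\gamma_H^{-2}$ by Lemma \ref{lambda1quotient}. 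The exponent $2/3$ is precisely what makes $\gamma_H^2\log|G| \ll_{K,|S|} \gamma_H^2\log\gamma + \gamma_H^3 \ll \gamma^2$ close. Structurally, this is a two-block argument, not a single-block argument with inflated parameters; the small-scale irregularity is quarantined inside $H$ rather than smeared over the moderate-growth constants. If you want to salvage your approach, you would need an $L^2$ mixing bound whose dependence on the moderate-growth data is sub-exponential in $\log A$ uniformly in $d$, and no such bound is available.
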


In proving Theorem \ref{doubling-quadratic} we make use of the following variant of Lemma \ref{lem:expand.diam}.

\begin{lemma} \label{lambda1quotient} Let $\mathcal{G}$ be the Cayley graph of a finite group $G$ with symmetric generating set $S$. Let $H \leq G$ be a normal subgroup and let $\gamma_H$ be the diameter of $H$ in the graph distance. Assume that $f$ is a function on $G$ such that $\sum_{x \in gH} f(x) = 0$ for every $g \in G$. Then
$$\frac{\|\nabla f\|_2^2}{\|f\|_2^2} \geq \frac{1}{\gamma_H^2}$$
\end{lemma}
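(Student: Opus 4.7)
The plan is to establish this standard discrete Poincar\'e-type inequality via a telescoping argument along right-translation orbits of $H$. The starting point is an identity that converts the coset-mean-zero hypothesis into a variance-type expression: expanding the square and using that $\sum_{h\in H} f(yh) = \sum_{x\in yH} f(x) = 0$ for every $y \in G$, one obtains
$$2|H|\,\|f\|_2^2 \;=\; \sum_{h\in H}\sum_{y\in G} \bigl(f(y) - f(yh)\bigr)^2.$$
It therefore suffices to show that $\sum_{y\in G} (f(y) - f(yh))^2 \le 2\gamma_H^2 \|\nabla f\|_2^2$ uniformly in $h \in H$.

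To prove this bound, I would fix for each $h \in H$ a geodesic expression $h = s_1 s_2 \cdots s_k$ with $s_i \in S$ and $k = |h|_S \le \gamma_H$, and telescope $f(yh) - f(y)$ along the path $y, ys_1, ys_1 s_2, \ldots, yh$ in the Cayley graph. The Cauchy--Schwarz inequality then yields
$$\bigl(f(y) - f(yh)\bigr)^2 \;\le\; k \sum_{i=1}^k \bigl(\nabla f(\{ys_1\cdots s_{i-1},\, ys_1 \cdots s_i\})\bigr)^2.$$
Summing over $y \in G$ and changing variables $z = ys_1 \cdots s_{i-1}$ in each inner term reduces it to $\sum_{z \in G} (f(z) - f(zs_i))^2$, and this is in turn bounded by $2\|\nabla f\|_2^2$ because every edge of the Cayley graph is counted at most twice in such a sum.

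Combining these estimates gives $\sum_y (f(y) - f(yh))^2 \le 2k^2 \|\nabla f\|_2^2 \le 2\gamma_H^2 \|\nabla f\|_2^2$, and substituting back into the identity yields $\|f\|_2^2 \le \gamma_H^2 \|\nabla f\|_2^2$, which is precisely the inequality claimed in the lemma. I do not anticipate any real obstacle; the only point requiring a little care is the factor of $2$ in the edge-counting step. Normality of $H$ is in fact not needed for this particular estimate, since right-translation by $h \in H$ always preserves each left coset $yH$, so the coset-mean-zero hypothesis applies exactly as required.
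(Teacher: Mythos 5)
Your proof is correct and follows essentially the same strategy as the paper: a telescoping of $f(y)-f(yh)$ along a geodesic for $h\in H$, an application of Cauchy--Schwarz, and the coset-mean-zero hypothesis to convert the double sum $\sum_{h\in H}\sum_{y\in G}(f(y)-f(yh))^2$ into a multiple of $\|f\|_2^2$. The only differences are cosmetic: you translate on the right so that the sum over $h\in H$ stays within the left coset $yH$ (correctly noting that this makes normality unnecessary, whereas the paper's version translates on the left and invokes $Hg=gH$), and you compute the double sum via an exact expansion rather than the paper's one-sided bound $\|f-f(g)\|^2_{L^2(gH)}\ge\|f\|^2_{L^2(gH)}$, which also makes your bookkeeping of the factor of $2$ in the edge count cleaner.
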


\begin{proof} Pick $y\in H$ and connect it to the identity by a geodesic path $1=y_0,y_1,\ldots,y_n=y$, where $y_{i}=s_iy_{i-1}$ for some $s_i \in S$, and $n \leq \gamma_H$. Observe that for each $i$ we have $\sum_{g \in G} |f(y_{i-1}g) - f(y_{i}g)|^2 \leq \|\nabla f\|_2^2$. The Cauchy--Schwarz inequality in the form $\left(\sum_{i=1}^n|v_i|\right)^2\le n\|v\|_2^2$ then gives
\begin{equation}\label{l1q.1}
\sum_{g \in G} |f(g) - f(yg)|^2 \leq  \sum_{g \in G} \gamma_H \sum_{1}^n |f(y_{i-1}g) - f(y_ig)|^2\leq \gamma_H^2 \|\nabla f\|_2^2.
\end{equation}
On the other hand, for $g \in G$ we have $\sum_{y \in H} |f(g) - f(yg)|^2 = \|f - f(g)\|^2_{L^2(gH)}$ since $gH=Hg$, and so
\begin{equation}\label{l1q.2}
\sum_{y \in H} |f(g) - f(yg)|^2 \geq \|f\|^2_{L^2(gH)}
\end{equation}
since $f$ has mean zero on $gH$. Summing \eqref{l1q.1} over $y\in H$ and \eqref{l1q.2} over $g \in G$ shows that $|H|\|f\|_2^2\le\sum_{g \in G}\sum_{y\in H} |f(g) - f(yg)|^2\le|H|\gamma_H^2\|\nabla f\|_2^2$, and so the lemma is proved.
\end{proof}

\begin{proof}[Proof of Theorem \ref{doubling-quadratic}] The first item follows directly from the combination of Theorem \ref{thm:scales} and Lemma \ref{lambdadoubling}.

For the second item, we first note that if $C_K$ is chosen large enough then Proposition \ref{doubling-structure} implies that there is a normal subgroup $H \leq G$ contained in $S^{8\gamma^{2/3}}$ such that $G/H$ has a nilpotent subgroup whose index, rank and nilpotency class are bounded in terms of $K$ only. Note that the diameter $\gamma_{G/H}$ of the quotient Cayley graph is at least $\gamma/2$ (unless $\gamma$ is bounded), and hence that, by Proposition \ref{prop:ab.converse} and Lemma \ref{lem:rei-schr}, this quotient Cayley graph is $\eps$-almost flat for some $\eps$ depending only on $|S|$ and $K$. Corollary \ref{cor:mixing} therefore applies, and we conclude (denoting by $P:=P_{\mu_S}$ the operator of left convolution by $\mu_S$ and recalling ($\ref{l2}$)) that if $u$ is a zero mean function in $\ell^2(G/H)$, then $\|P^n u \|_2 \leq Be^{-n/C\gamma^2} \|u\|_2 \|\mu_{G/H}\|_2$ for all $n \geq 1$ and for some positive constants $B,C$ depending only on $|S|$ and $K$.

Now we decompose the space $\ell^2(G)$ as a direct sum $\ell^2(G)=\ell^2(G/H) \oplus \ell^2(G/H)^{\perp}$, where $\ell^2(G/H)$ is this time viewed as the subspace of $H$-invariant functions and $\ell^2(G/H)^{\perp}$ its orthogonal, the subspace of functions with zero mean on each coset of $H$. Both spaces are $G$-invariant, and given a function $f$ on $G$ we may write $f$ uniquely as $f=u+v$, where $u \in \ell^2(G/H)$ and  $v \in \ell^2(G/H)^{\perp}$, and $\|f\|_2^2 = \|u\|_2^2+\|v\|_2^2$. In particular, this implies that $\|P^n f\|_2^2 = \|P^n u\|_2^2 +\|P^n v\|_2^2$. However, the norm of $P$ in restriction to $\ell^2(G/H)^{\perp}$ is given by $1-\lambda/|S|\le e^{-\lambda/|S|}$, where $\lambda$ is the smallest eigenvalue of $\Delta$ in restriction to $\ell^2(G/H)^{\perp}$. Lemma \ref{lambda1quotient} shows that $\lambda\ge1/\gamma_H^2$, where $\gamma_H$ is the diameter of $H$ in $G$, and so we have $\|P^n v\|_2 \leq e^{-\frac{n}{|S|\gamma_H^2}}\|v\|_2$.

Now note that $\mu_S^{(n)} - \mu_G = P^n(f)$, where $f=1_{e} - \mu_G$. Decompose $f$ as $f=\overline{f} + (f-\overline{f})$, where $\overline{f}$ is the $H$-invariant function $\overline{f}(g):=\frac{1}{|H|}\sum_{x \in gH} f(x)$, so that $f-\overline{f}$ has zero mean on $H$-cosets, and note therefore that

$$\|\mu_S^{(n)} - \mu_G\|_2^2 = \|P^n(\overline{f})\|_2^2 + \|P^n(f-\overline{f})\|_2^2  \leq B^2e^{-2n/C\gamma^2} \|\overline{f}\|_2^2 \|\mu_{G/H}\|_2^2 +  e^{-\frac{2n}{|S|\gamma_H^2}}\|f-\overline{f}\|_2^2$$

Finally, observe that $\overline{f}=\frac{1}{|H|} 1_H - \mu_G$, that $\|\overline{f}\|_2^2 = 1/|H| - 1/|G| \leq 1/|H|=\|\mu_G\|_2^2/\|\mu_{G/H}\|_2^2$, and that $\|f-\overline{f}\|_2^2=1-\frac{1}{|H|} \leq 1$. Hence
$$\|\mu_S^{(n)} - \mu_G\|_2^2 \leq  B^2e^{-2n/C\gamma^2} \|\mu_G\|_2^2 + e^{-\frac{n}{|S|\gamma_H^2}}.$$

Now note that $\gamma_H^2 \log |G| = \gamma_H^2 \log (|G|/|H|) + \gamma_H^2 \log |H|$. But $G/H$ is $\eps$-almost flat, so $|G/H| \leq |S| \gamma^{1/\eps}$, and clearly $|H| \leq |S|^{\gamma_H}$, so $\log |H| \leq \gamma_H \log |S|$. On the other hand $H$ is contained in the ball of radius $8\gamma^{2/3}$ so $\gamma_H \leq 8\gamma^{2/3}$. It follows that
$$\gamma_H^2 \log |G| \ll_{K,|S|} \gamma_H^{2} \log \gamma + \gamma_H^3 \ll \gamma^2,$$
and therefore that $\|\mu_S^{(n)} - \mu_G\|_2 \leq \frac{1}{10} \|\mu_G\|_2$ as soon as $n \gg_{K,|S|} \gamma^2$. This gives the result for $T_2$, and hence for all $T_p$ by Proposition \ref{basicmixing}.
\end{proof}

We finish by presenting a simple example showing that the exponent $\frac{2}{3}$ in the second item of Theorem \ref{mix} is sharp. We thank T. Zheng for suggesting it.

\begin{example}[Sharpness of the $\frac{2}{3}$ in Theorem \ref{mix}]\label{sharpex} Let $L_M$ be the lamplighter group $L_M:=\Z/M\Z \ltimes B$, where $B$ is the direct product of $M$ copies of $\Z/2\Z$ that are cyclically permuted by the action of $\Z/M\Z$, and let $G:=L_M \times \Z/N\Z$. Let $S:=S_1 \times S_2$, where $S_1$ is the standard generating set of $L_M$ (consisting of the identity, a move of one space to the left or right, or a switch of the current lamp), and $S_2$ is the standard generating set $\{0, \pm 1\}$ of $\Z/N\Z$. Clearly the diameter $\gamma$ of $G$ is comparable to $\max \{M,N\}$. On the other hand, the uniform mixing time $T_\infty(G)$ is at least $\max\{T_\infty(L_M), T_\infty(\Z/N\Z)\}$, which is comparable to $\max\{M^3,N^2\}$ (see \cite{haggstrom-jonasson}). Let $\alpha \in (\frac{2}{3},1)$ and set $M:=N^{\alpha}$. Then $\gamma \simeq N$, and $T_\infty(G)$ grows faster than $N^2\simeq\gamma^2$. However, $M\simeq\gamma^\alpha$, and we have uniform doubling at all scales $\gtrsim M$.
\end{example}

\end{document}